\newtheorem{theorem}{{\sc Theorem}}[section]
\newtheorem{lemma}[theorem]{{\sc Lemma}}
\newtheorem{corollary}[theorem]{Corollary}
\newtheorem{remark}[theorem]{Remark}
\newtheorem{reflist}[theorem]{List}
\newtheorem{definition}[theorem]{Definition}
\newtheorem{conjecture}[theorem]{Conjecture}
\newcommand{\bb}[1]{\mathbb{ #1}}
\newcommand{\End}{\mathrm{End}}
\newcommand{\Sym}{\mathrm{Sym}}
\newcommand{\Skew}{\mathrm{Skew}}
\newcommand{\Span}{\mathrm{Span}}
\newcommand{\defeq}{{\buildrel\rm def\over=}}
\newcommand{\bra}[1]{\overline{#1}}
\newcommand{\rank}{\mathrm{rank}}
\newcommand{\hf}{\displaystyle\frac{1}{2}}
\newcommand{\nth}[1]{\displaystyle\frac{1}{#1}}
\newcommand{\Grad}{\nabla}
\newcommand{\Div}{\nabla \cdot}
\renewcommand{\Hat}[1]{\widehat{#1}}
\newcommand{\av}[1]{\langle #1 \rangle}
\def\XXint#1#2#3{{\setbox0=\hbox{$#1{#2#3}{\int}$ }
\vcenter{\hbox{$#2#3$ }}\kern-.6\wd0}}
\newcommand{\re}{\Re\mathfrak{e}}
\newcommand{\mat}[4]{\left[\begin{array}{cc}
\displaystyle{#1}&\displaystyle{#2}\\[1ex]
\displaystyle{#3}&\displaystyle{#4}\end{array}\right]}
\newcommand{\vect}[2]{\left[\begin{array}{c}
\displaystyle{#1}\\[1ex]\displaystyle{#2}\end{array}\right]}
\newcommand{\mc}{microstructure}
\newcommand{\mg}{microgeometr}
\newcommand{\WLOG}{without loss of generality}
\newcommand{\nbh}{neighborhood}
\newcommand{\IFF}{if and only if }
\newcommand{\complete}{complete}
\newcommand{\completeness}{completeness}
\newcommand{\Ga}{\alpha}
\newcommand{\Gb}{\beta}
\newcommand{\Gd}{\delta}
\newcommand{\Gve}{\varepsilon}
\newcommand{\Gvf}{\varphi}
\newcommand{\Gg}{\gamma}
\newcommand{\Gl}{\lambda}
\newcommand{\Gth}{\theta}
\newcommand{\Gs}{\sigma}
\newcommand{\GG}{\Gamma}
\newcommand{\GL}{\Lambda}
\bmdefine\BGa{\alpha}
\bmdefine\BGb{\beta}
\bmdefine\BGd{\delta}
\bmdefine\BGe{\epsilon}
\bmdefine\BGve{\varepsilon}
\bmdefine\BGf{\phi}
\bmdefine\BGvf{\varphi}
\bmdefine\BGg{\gamma}
\bmdefine\BGc{\chi}
\bmdefine\BGi{\iota}
\bmdefine\BGk{\kappa}
\bmdefine\BGl{\lambda}
\bmdefine\BGn{\eta}
\bmdefine\BGm{\mu}
\bmdefine\BGv{\nu}
\bmdefine\BGp{\pi}
\bmdefine\BGth{\theta}
\bmdefine\BGvth{\vartheta}
\bmdefine\BGr{\rho}
\bmdefine\BGvr{\varrho}
\bmdefine\BGs{\sigma}
\bmdefine\BGvs{\varsigma}
\bmdefine\BGt{\tau}
\bmdefine\BGj{\tau}
\bmdefine\BGu{\upsilon}
\bmdefine\BGo{\omega}
\bmdefine\BGx{\xi}
\bmdefine\BGy{\psi}
\bmdefine\BGz{\zeta}
\bmdefine\BGD{\Delta}
\bmdefine\BGF{\Phi}
\bmdefine\BGG{\Gamma}
\bmdefine\BGL{\Lambda}
\bmdefine\BGP{\Pi}
\bmdefine\BGT{\Theta}
\bmdefine\BGS{\Sigma}
\bmdefine\BGU{\Upsilon}
\bmdefine\BGO{\Omega}
\bmdefine\BGX{\Xi}
\bmdefine\BGY{\Psi}
\newcommand{\CA}{{\mathcal A}}
\newcommand{\CB}{{\mathcal B}}
\newcommand{\CC}{{\mathcal C}}
\newcommand{\CH}{{\mathcal H}}
\newcommand{\CL}{{\mathcal L}}
\newcommand{\CM}{{\mathcal M}}
\newcommand{\CN}{{\mathcal N}}
\newcommand{\CT}{{\mathcal T}}
\newcommand{\CU}{{\mathcal U}}
\newcommand{\CV}{{\mathcal V}}
\newcommand{\CW}{{\mathcal W}}
\bmdefine\BCA{{\mathcal A}}
\bmdefine\BCB{{\mathcal B}}
\bmdefine\BCC{{\mathcal C}}
\bmdefine\BCD{{\mathcal D}}
\bmdefine\BCE{{\mathcal E}}
\bmdefine\BCF{{\mathcal F}}
\bmdefine\BCG{{\mathcal G}}
\bmdefine\BCH{{\mathcal H}}
\bmdefine\BCI{{\mathcal I}}
\bmdefine\BCJ{{\mathcal J}}
\bmdefine\BCK{{\mathcal K}}
\bmdefine\BCL{{\mathcal L}}
\bmdefine\BCM{{\mathcal M}}
\bmdefine\BCN{{\mathcal N}}
\bmdefine\BCO{{\mathcal O}}
\bmdefine\BCP{{\mathcal P}}
\bmdefine\BCQ{{\mathcal Q}}
\bmdefine\BCR{{\mathcal R}}
\bmdefine\BCS{{\mathcal S}}
\bmdefine\BCT{{\mathcal T}}
\bmdefine\BCU{{\mathcal U}}
\bmdefine\BCV{{\mathcal V}}
\bmdefine\BCW{{\mathcal W}}
\bmdefine\BCX{{\mathcal X}}
\bmdefine\BCY{{\mathcal Y}}
\bmdefine\BCZ{{\mathcal Z}}
\bmdefine\Bzr{ 0}
\bmdefine\Ba{ a}
\bmdefine\Bb{ b}
\bmdefine\Bc{ c}
\bmdefine\Bd{ d}
\bmdefine\Be{ e}
\bmdefine\Bf{ f}
\bmdefine\Bg{ g}
\bmdefine\Bh{ h}
\bmdefine\Bi{ i}
\bmdefine\Bj{ j}
\bmdefine\Bk{ k}
\bmdefine\Bl{ l}
\bmdefine\Bm{ m}
\bmdefine\Bn{ n}
\bmdefine\Bo{ o}
\bmdefine\Bp{ p}
\bmdefine\Bq{ q}
\bmdefine\Br{ r}
\bmdefine\Bs{ s}
\bmdefine\Bt{ t}
\bmdefine\Bu{ u}
\bmdefine\Bv{ v}
\bmdefine\Bw{ w}
\bmdefine\Bx{ x}
\bmdefine\By{ y}
\bmdefine\Bz{ z}
\bmdefine\BA{ A}
\bmdefine\BB{ B}
\bmdefine\BC{ C}
\bmdefine\BD{ D}
\bmdefine\BE{ E}
\bmdefine\BF{ F}
\bmdefine\BG{ G}
\bmdefine\BH{ H}
\bmdefine\BI{ I}
\bmdefine\BJ{ J}
\bmdefine\BK{ K}
\bmdefine\BL{ L}
\bmdefine\BM{ M}
\bmdefine\BN{ N}
\bmdefine\BO{ O}
\bmdefine\BP{ P}
\bmdefine\BQ{ Q}
\bmdefine\BR{ R}
\bmdefine\BS{ S}
\bmdefine\BT{ T}
\bmdefine\BU{ U}
\bmdefine\BV{ V}
\bmdefine\BW{ W}
\bmdefine\BX{ X}
\bmdefine\BY{ Y}
\bmdefine\BZ{ Z}
\title{Lamination exact relations and their stability under homogenization}
\author{Yury Grabovsky}
\begin{document}
\maketitle
\begin{abstract}
  Relations between components of the effective tensors of composites that
  hold regardless of composite's microstructure are called exact
  relations. Relations between components of the effective tensors of all
  laminates are called lamination exact relations. The question of existence
  of sets of effective tensors of composites that are stable under lamination,
  but not homogenization was settled by Milton with an example in 3D
  elasticity. In this paper we discuss an analogous question for exact
  relations, where in a wide variety of physical contexts it is known (a
  posteriori) that all lamination exact relations are stable under
  homogenization. In this paper we consider 2D polycrystalline multi-field
  response materials and give an example of an exact relation that is stable
  under lamination, but not homogenization. We also shed some light on the
  surprising absence of such examples in most other physical contexts
  (including 3D polycrystalline multi-field response materials). The methods
  of our analysis are algebraic and lead to an explicit description (up to
  orthogonal conjugation equivalence) of all representations of formally real
  Jordan algebras as symmetric $n\times n$ matrices. For each representation
  we examine the validity of the 4-chain relation|a 4th degree polynomial
  identity, playing an important role in the theory of special Jordan algebras.
\end{abstract}


\section{Introduction}
\setcounter{equation}{0} 
\label{sec:intro}
The study of effective behavior of composite materials abounds with beautiful
formulas linking the components of the effective tensor and the tensor of
material properties of its constituents, when virtually nothing is known about
the \mc\ of the composite
\cite{hill63,kell64,levi,roha70,hash84,schu87,mish89b,mish89a,dvor90,dvor92,%
  dunn93,benv93,benv94,milg97} (see also a review by Milton
\cite{milt97}). The general theory of such formulas was developed in
\cite{grab98,gms00,grab03}. The idea was to identify all relations that are
preserved in all laminate \mc s, where two constituent materials are combined
in layers perpendicular to a given unit vector (lamination direction).
According to the general theory, reviewed in Section~\ref{app:toer}, each
relation preserved under lamination corresponds to a \emph{Jordan
  multialgebra}|a subspace of the space of symmetric matrices closed with
respect to \emph{several} Jordan multiplications. Mutations of Jordan algebras
\cite{koech99} are particular examples of Jordan multialgebras, where all
multiplications are ``internal'', i.e. coming from a single Jordan product in
a classical Jordan algebra. In the context of exact relations not every Jordan
multialgebra is a mutant of a classical one. The fundamental problem in the
theory of exact relations is whether every Jordan multialgebra gives rise to a
\mc-independent relation. If this is the case, then all lamination exact
relations are stable under homogenization. In \cite{gms00,grab03} we obtained
an algebraic condition on Jordan multialgebras sufficient for stability under
homogenization. In order to describe it we define \emph{completion} of a
Jordan multialgebra to be the set of all symmetric matrices in the smallest
associative multialgebra containing the original Jordan multialgebra. If the
Jordan multialgebra is \complete, i.e. equal to its completion, then the
corresponding lamination exact relation is stable under
homogenization. Mutations of classical special Jordan algebras do not seem to
posses superior completion properties compared to the general Jordan
multialgebras, and hence, will not be investigated as a special subclass in
this paper. The \completeness\ for classical special Jordan algebras is
related to the 4th degree polynomial identity, called the 4-chain relation,
via Cohn's theorem \cite{cohn54}. It is well-known that classical special
Jordan algebras do not have to be \complete. However, from the point of view
of the theory of composite materials the situation may be reversed if we
restrict our attention only to those Jordan multialgebras that arise in
physics. In particular, we will focus on the Jordan multialgebras
corresponding to the polycrystalline composites
\cite{grab98,grsa98,gms00}. These possess additional structure of a
representation space of $SO(3)$ or $SO(2)$ (for 2D or fiber-reinforced
composites, whose \mg y is completely determined by any 2D cross-section). The
interaction between the group representation structure of $SO(d)$ and the
multiplicative structure of Jordan multialgebras has been studied in a series
of papers \cite{sage01,sage05,sage05a,sage08} inspired by the theory of exact
relations.

While there is no general theorem that guarantees \completeness\ for $SO(3)$
or $SO(2)$-invariant Jordan multialgebras, all known physically relevant
examples of $SO(3)$ or $SO(2)$-invariant Jordan multialgebras are
\complete. Is it a coincidence or a new fundamental property specific to
rotationally invariant Jordan multialgebras? In this paper we exhibit a
physically motivated example in 2D featuring an $SO(2)$-invariant
\emph{in\complete} Jordan multialgebra and the corresponding lamination exact
relation that is not stable under homogenization (see (\ref{myex})). This
needs to be compared with an earlier result in \cite{gms00} that all
$SO(3)$-invariant Jordan multialgebras in the 3D analog of our example are
\complete, giving further support for the conjecture that all physically
relevant $SO(3)$-invariant Jordan multialgebras are \complete\ (see
Conjecture~\ref{conj:compl} below).

We remark that our example of $SO(2)$-invariant in\complete\ Jordan
multialgebra could lead to another example of a rank-1 convex function that is
not quasiconvex. While such an example has already been produced both in
Calculus of Variations \cite{sver92} and the theory of composite materials
\cite[Section~39.9]{mlt}, our example, coming from the theory of exact
relations would possess additional $SO(2)$ symmetry absent in the existing examples.

While the 3D version of our example, described in Section~\ref{sec:ex}, can be
completely analyzed (see Theorem~\ref{th:invJA}), the 2D version is more
involved. In the more difficult 2D case we only exhibit a specific family of
Jordan multialgebras generated by the subalgebras of $\Sym(\bb{R}^{n})$| the
Jordan algebra of all real symmetric $n\times n$ matrices. The question of
\completeness\ of these subalgebras has not been studied. The answer to this
question represents an algebraic contribution of this paper, which includes a
complete characterization of all \emph{faithful representations} of all
formally real Jordan algebras in $\Sym(\bb{R}^{n})$ up to orthogonal
conjugation. We note that all formally real Jordan algebras, up to a Jordan
algebra isomorphism, were described in \cite{jvnw34}, where an attempt was
made to build an algebraic foundation of quantum mechanics, where the
observables would be defined axiomatically, and not as operators on a Hilbert
space with the extraneous structure of associative multiplication.

The explicit characterization of Jordan subalgebras of $\Sym(\bb{R}^{n})$,
reveals that \completeness\ can indeed fail, but only in a few exceptional
cases. Adding more multiplications with respect to which the algebras have to
be closed makes these cases even more exceptional, so that in the relatively
low dimensions \completeness\ holds in general. Specifically, all Jordan
subalgebras of $\Sym(\bb{R}^{n})$ are \complete, if $n<8$. Even for general
$n$ \completeness\ is generic. A generic symmetric $n\times n$ matrix has $n$
distinct eigenvalues. Any subalgebra of $\Sym(\bb{R}^{n})$ containing such a
matrix must necessarily be \complete. This sheds light on the conspicuous
absence of in\complete\ Jordan multialgebras from all physically relevant
examples so far, since all of them, with the exception of the example in
Section~\ref{sec:ex}, are relatively low dimensional.

\section{General theory of exact relations}
\setcounter{equation}{0} 
\label{app:toer}
The standard references \cite{BLP,jko94} for the mathematical theory of
composite materials emphasize homogenization theorems and deal primarily with
conducting and elastic composites. In the case of periodic composites there is
an abstract Hilbert space framework \cite{kopa82,dfo86,MK,milt87a,milt87b,m}
that treats all coupled field composites (e.g.\@ piezo-electric, thermo-elastic,
etc.) from the common point of view. We refer to \cite{mlt} for a systematic
treatment of composites and exact relations based on these ideas. In this
paper we adopt the abstract Hilbert space formalism. Since our specific
question is algebraic in nature we give here an algebra-centered overview of
the general theory of exact relations. We refer the reader to \cite{gms00,mlt}
for a composite material-centered exposition.

A linear material response to applied fields (electric, elastic, thermal,
etc.) is described by a symmetric positive definite operator $L$ on a finite
dimensional inner product space $\CT$. If we are interested in polycrystalline
composites the space $\CT$ is also a representation space of the rotation
group $SO(3)$ or $SO(2)$, if we deal with fiber-reinforced composites,
whose \mc\ is determined by any cross-section perpendicular to the chosen ``fiber
direction''. In this case the inner product on $\CT$ is $SO(d)$-invariant, $d=2,3$.

In making a composite we choose a compact set of admissible materials
$U\subset\Sym^{+}(\CT)$, where $\Sym^{+}(\CT)$ is the set of symmetric
positive definite operators on $\CT$. In the case of polycrystalline
composites the set $U$ must be $SO(d)$-invariant. The effective tensor $L^{*}$
of a composite depends on the \mc. Varying the \mc\ over \emph{all}
possible ones we obtain the set $G(U)$ of corresponding effective tensors $L^{*}$ of
composites, called the G-closure of $U$.

Generically the set $G(U)$ has a non-empty interior in $\Sym^{+}(\CT)$, even
if $U$ consists of only 2 points. We are interested in special situations,
where $G(U)$ is a submanifold of $\Sym^{+}(\CT)$ of non-zero co-dimension.
\begin{definition}
  \label{def:ER}
  The submanifold $\bb{M}$ of $\Sym^{+}(\CT)$ is called \textbf{an exact
    relation} if the effective tensor $L^{*}$ of a composite will lie in
  $\bb{M}$, whenever constituent materials lie in $\bb{M}$, \emph{regardless
    of the microstructure}.
\end{definition}

\subsection{Lamination exact relations and Jordan multialgebras}
In order to identify these special exact relations cases we test G-closure set
by taking two arbitrary points
$\{L_{1},L_{2}\}\subset G(U)$ and forming a laminate---a composite
consisting of layers of material $L_{1}$ alternating with layers of material
$L_{2}$. Such a laminate is described by the direction of the lamination
$\Bn\in\bb{S}^{d-1}$ ($d=2$ or 3) and the volume fractions $\Gth_{1}$,
$\Gth_{2}=1-\Gth_{1}$ of $L_{1}$ and
$L_{2}$, respectively. For the simple laminate \mc\
there exists a beautiful explicit formula for the effective tensor \cite{m,mlt}
\begin{equation}
  \label{Wlam}
W_{\Bn}(L^{*};L_{0})=\Gth_{1}W_{\Bn}(L_{1};L_{0})+\Gth_{2}W_{\Bn}(L_{2};L_{0}),  
\end{equation}
where $L_{0}$ is an arbitrary ``reference material'' and the map
$W_{\Bn}(L,L_{0})$ has the form
\[
W_{\Bn}(L;L_{0})=[(L_{0}-L)^{-1}-\GG_{L_{0}}(\Bn)]^{-1}=
[I_{\CT}-(L_{0}-L)\GG_{L_{0}}(\Bn)]^{-1}(L_{0}-L),
\]
where $I_{\CT}$ is the identity map on $\CT$, and
$\GG_{L_{0}}(\Bn)\in\Sym(\CT)$ is an explicitly known function of
$\Bn\in\bb{S}^{d-1}$, whose specific definition is not important for our
purposes. We remark, that even though the formula (\ref{Wlam}) involves
$L_{0}$, the effective tensor $L^{*}$ defined by it is independent of $L_{0}$.
Restricting attention only to laminate \mc s we formulate the notion of
\emph{lamination exact relation}.
\begin{definition}
\label{def:lamER}
  A \textbf{lamination exact relation} is a submanifold $\bb{M}$ of positive
  co-dimension in $\Sym^{+}(\CT)$ such that the effective tensor $L^{*}$ of a
  laminate made with $\{L_{1},L_{2}\}\subset\bb{M}$ is in $\bb{M}$ for any
  choice of lamination direction $\Bn\in\bb{S}^{d-1}$ and volume faction
  $\Gth_{1}\in[0,1]$. 
\end{definition}
It is clear from (\ref{Wlam}) that $W_{\Bn}(\bb{M};L_{0})$ is a convex
subset of $\Sym(\CT)$ and at the same time a submanifold of $\Sym(\CT)$ of the same
co-dimension as $\bb{M}$. Therefore, $W_{\Bn}(\bb{M};L_{0})$ is an affine
subspace of $\Sym(\CT)$. However, choosing $L_{0}\in\bb{M}$ and observing that
$W(L_{0};L_{0})=0$ implies that the affine subspaces
$\Pi_{\Bn}=W_{\Bn}(\bb{M};L_{0})$ are linear subspaces of $\Sym(\CT)$. The function
$\GL_{\Bm,\Bn}=W_{\Bm}\circ W_{\Bn}^{-1}$ maps $\Pi_{\Bn}$ into $\Pi_{\Bm}$
diffeomorphically (at least in the \nbh\ of 0). We easily compute
\[
\GL_{\Bm,\Bn}(K)=[K^{-1}+\GG_{L_{0}}(\Bn)-\GG_{L_{0}}(\Bm)]^{-1}=[I_{\CT}-(\GG_{L_{0}}(\Bm)-\GG_{L_{0}}(\Bn))K]^{-1}K.
\]
We see that the differential of $\GL_{\Bm,\Bn}$ at $K=0$ is the identity map:
$d\GL_{\Bm,\Bn}(0)\xi=\xi$ for any $\xi\in T_{0}\Pi_{\Bn}$. That means that
the tangent spaces of $\Pi_{\Bn}$ and $\Pi_{\Bm}$ are the same. It follows
that $\Pi_{\Bn}=\Pi_{\Bm}$, since the submanifolds $\Pi_{\Bn}$ and
$\Pi_{\Bm}$, being subspaces of $\Sym(\CT)$, coincide with their tangent
spaces. Thus, $\Pi=W_{\Bn}(\bb{M};L_{0})$ is a well-defined subspace of
$\Sym(\CT)$, independent of $\Bn$. Expanding the map $\GL_{\Bn,\Bm}:\Pi\to \Pi$ in
powers of $K$ we obtain
\[
\GL_{\Bm,\Bn}(K)=KA_{\Bm,\Bn}K+KA_{\Bm,\Bn}KA_{\Bm,\Bn}K+KA_{\Bm,\Bn}KA_{\Bm,\Bn}KA_{\Bm,\Bn}K+\ldots,
\]
where $A_{\Bm,\Bn}=\GG_{L_{0}}(\Bm)-\GG_{L_{0}}(\Bn)$. We conclude that for $\bb{M}$ to be a
lamination exact relation it is necessary and sufficient that
\begin{equation}
  \label{Jmadef}
  KAK\in\Pi,\quad K\in\Pi,\quad A\in\CA=\Span\{\GG_{L_{0}}(\Bm)-\GG_{L_{0}}(\Bn):|\Bn|=1\}.
\end{equation}
It is easy to check that the subspace $\CA$ does not depend on $\Bm$
(regardless of what $\GG_{L_{0}}(\Bm)$ is).
The subspaces $\Pi$ satisfying (\ref{Jmadef}) are Jordan
multialgebras, since they are closed with respect to
a family of Jordan multiplications
\[
K_{1}*_{A}K_{2}=\hf(K_{1}AK_{2}+K_{2}AK_{1}),\qquad A\in\CA.
\]
To avoid any ambiguity we give the following definition.
\begin{definition}
  \label{def:sja}
A subspace $\Pi$ of an associative algebra $\CB$ is called a \textbf{Jordan
  $\CA$-multialgebra} if it is closed with respect to a collection of
Jordan multiplications\footnote{It would be more proper to use the term
  special Jordan $\CA$-multialgebra. In this paper we omit the qualifier
  ``special'', since we do not deal with the most general
Jordan algebras over a general field.}
\begin{equation}
  \label{GJprod}
X*_{A}Y=\hf(XAY+YAX),\qquad A\in\CA,  
\end{equation}
where $\CA$ is a subspace of $\CB$. A classical definition of a special Jordan
algebra corresponds to 1-dimensional subspaces $\CA$. A subspace $\Pi'$ of $\CB$ is called an
\textbf{associative $\CA$-multialgebra} if it is closed with respect to a collection
of multiplications $(X,Y)\mapsto XAY$, $A\in\CA$. The associative
$\CA$-multialgebra $\Pi'$ is called \textbf{symmetric} if $X\in\Pi'$ implies $X^{T}\in\Pi'$.
\end{definition}
We remark that the notion of mutation of a Jordan algebra \cite{koech99}
provides a family of examples of Jordan multialgebras (they can be called
``inner multialgebras''). If $\Pi_{0}$ is closed with respect to a \emph{single}
product $X*_{A_{0}}Y$ then it will always be closed with respect to a family
of multiplications (\ref{GJprod}), where $\CA=\{A_{0}BA_{0}:B\in\Pi_{0}\}$.
The question of comparing algebraic structures of an algebra and its mutation
is studied in \cite{koech99}, and is not the object of our investigation.

\subsection{Exact relations and \complete\  Jordan multialgebras}
Recall that Jordan multialgebras $\Pi$ correspond to the lamination
exact relations (see Definition~\ref{def:lamER}). The fundamental question in
the theory of exact relations is whether or not all lamination exact relations
are exact relations in the sense of Definition~\ref{def:ER}. This question was
investigated in \cite{grab98,gms00,grab03,mlt}, where simple algebraic
sufficient conditions have been formulated. To state this condition we define
the notion of \emph{\complete} Jordan $\CA$-multialgebra. Starting with a
Jordan $\CA$-multialgebra $\Pi$ we define $\mathfrak{A}(\Pi)$ to be the
smallest associative $\CA$-multialgebra in the sense of
Definition~\ref{def:sja} containing $\Pi$. The associative $\CA$-multialgebra
$\mathfrak{A}(\Pi)$ is necessarily symmetric in the sense of
Definition~\ref{def:sja}. We then define a \emph{completion} $\bra{\Pi}$
of $\Pi$ by
\begin{equation}
  \label{compdef}
\bra{\Pi}=\mathfrak{A}(\Pi)_{\rm sym},
\end{equation}
where
\[
V_{\rm sym}\defeq\,V\bigcap\Sym(\CT)=\{K+K^{T}:K\in V\}
\]
for a subspace $V\subset\End(\CT)$, such that $V^{T}=V$.
\begin{definition}
\label{def:comp}
  A Jordan $\CA$-multialgebra $\Pi\subset\Sym(\CT)$ is called \textbf{\complete} if
\begin{equation}
  \label{ER}
 \bra{\Pi}=\Pi,
\end{equation} 
We will say that $\Pi$ is \textbf{in\complete}, if $\Pi$ is not \complete.
\end{definition}
It is easy to see that the completion $\bra{\Pi}$ of $\Pi$ is the smallest
complete Jordan $\CA$-multialgebra containing $\Pi$.

According to the general theory \cite{gms00,grab03}, \complete\  Jordan
$\CA$-multialgebras $\Pi$ correspond to exact relations
$\bb{M}=W_{\Bn}^{-1}(\Pi;L_{0})$.

It is easy to show that $\Pi$ is \complete\  \IFF 3 and 4-chain
identities
 \begin{equation}
  \label{34chain}
K_{1}A_{1}K_{2}A_{2}K_{3}+K_{3}A_{2}K_{2}A_{1}K_{1}\in\Pi,\qquad  
K_{1}A_{1}K_{2}A_{2}K_{3}A_{3}K_{4}+K_{4}A_{3}K_{3}A_{2}K_{2}A_{1}K_{1}\in\Pi  
\end{equation}
hold for all $\{K_{1},K_{2},K_{3},K_{4}\}\subset\Pi$ and all
$\{A_{1},A_{2},A_{3}\}\subset\CA$. The proof is a straightforward modification
of the proof of Cohn's theorem \cite{cohn54,jaco68}. We remark that in the
case of classical Jordan algebras (one-dimensional subspace $\CA$) the 3-chain
relations hold automatically, while this is not so for Jordan
multialgebras. This is a crucial distinction between classical special Jordan
algebras and Jordan multialgebras, since a significant part of the
classical Jordan algebra theory is based on the triple product \cite{jaco68,koech99,mccr04}
\[
\{K_{1},K_{2},K_{3}\}=K_{1}K_{2}K_{3}+K_{3}K_{2}K_{1}.
\]
It is well-known that not every classical special Jordan algebra is
\complete. However, the examples given in textbooks are not subalgebras of
$\Sym(\bb{R}^{n})$|the space of all symmetric $n\times n$
matrices. Additionally, even if there are in\complete\ subalgebras of
$\Sym(\bb{R}^{n})$, it is still not clear if the presence of several
multiplications would not change the situation. In fact, we have already shown
\cite{gms00} that \emph{all} Jordan $SO(2)$ and $SO(3)$-invariant
multialgebras in important physical contexts, such as conductivity,
elasticity, piezo-electricity, etc. are \complete. In an attempt to resolve
this apparent contradiction between classical theory of Jordan algebras and
physically relevant examples we describe a class of $SO(2)$ and
$SO(3)$-invariant Jordan multialgebras.

\subsection{$SO(d)$-invariant Jordan multialgebras}
\label{sec:prelim}
In this section it is important that $\CT$ be a real finite dimensional
representation of $SO(d)$ ($d=2$ or 3). The irreducible representations (irreps) are
parametrized by non-negative integers, called weights. In the theory of
composite materials, where one is interested in coupled thermal, electric and
elastic properties of materials, the space $\CT$ can contain only the irreps
of weights 0, 1 and 2. The weight 0 irrep $W_{0}$ is a 1-dimensional space
with trivial group action. The weight 1 irrep $W_{1}$ is $\bb{R}^{d}$ with the
natural action of $SO(d)$, while the weight 2 irrep $W_{2}$ is the space
$\Sym_{0}(\bb{R}^{d})$ of symmetric trace-free $d\times d$ matrices with
$SO(d)$ acting by conjugation $A\mapsto RAR^{-1}$. Thus, all conceivable
coupled field physical problems would be accommodated by
\begin{equation}
  \label{Trep}
\CT=W_{0}\otimes\bb{R}^{n_{0}}\oplus W_{1}\otimes\bb{R}^{n_{1}}\oplus W_{2}\otimes\bb{R}^{n_{2}},  
\qquad n_{0}\ge 0,\ n_{1}\ge 0,\ n_{2}\ge 0
\end{equation}
for an appropriate choice of $n_{0}$, $n_{1}$ and $n_{2}$. The notation in
(\ref{Trep}) emphasizes that the group acts trivially on the second factors in
tensor products above. We assume that the representation $\CT$ is equipped
with a fixed $SO(d)$-invariant inner product. The space $\Sym(\CT)$ of
symmetric maps on $\CT$ has a natural action of $SO(d)$ and splits into the
direct sum of irreps up to weight 4. In addition to the representation $\CT$
the algebraic structure of our problem is determined by the choice of a
subrepresentation $\CA\subset\Sym(\CT)$, which must be isomorphic to
$W_{2}\oplus W_{4}$ as an $SO(d)$ module (or only $W_{2}$, if $n_{2}=0$).
The choice of such a subrepresentation is highly non-unique, and we assume
that a generic choice is made. The goal is to answer the fundamental question
of \completeness\ for $SO(d)$-invariant Jordan $\CA$-multialgebras.
\begin{conjecture}
\label{conj:compl}
Assume that $\CT$ is given by (\ref{Trep}) and
$\CA\subset\Sym(\CT)$ is isomorphic to $W_{2}\oplus W_{4}$ as an $SO(3)$
module. We conjecture that any rotationally invariant
$\CA$-multialgebra is \complete. 
\end{conjecture}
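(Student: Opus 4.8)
The plan is to attack Conjecture~\ref{conj:compl} through the representation theory of $SO(3)$, reducing it to a finite list of \completeness\ questions for Jordan subalgebras of $\Sym(\bb{R}^m)$ with controlled $m$. By the Cohn-type theorem quoted above it suffices to verify the 3- and 4-chain identities (\ref{34chain}) for every $SO(3)$-invariant Jordan $\CA$-multialgebra $\Pi$. Since $\Pi$ is $SO(3)$-invariant and $\Sym(\CT)$ decomposes into irreps of weights $0$ through $4$, $\Pi$ is a direct sum of pieces lying in the several isotypic components, while $\CA\cong W_2\oplus W_4$ supplies multiplications of two ``types''. The first step is to use the Clebsch--Gordan rules for $SO(3)$ to extract the sharp restrictions that $K*_{A}K\in\Pi$ (for $K\in\Pi$, $A\in\CA$) imposes on which isotypic pieces can occur together --- in effect, to classify the possible ``block shapes'' of invariant Jordan $\CA$-multialgebras.

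The second step is to pass to the \emph{internal} picture. By Schur's lemma the weight-$k$ isotypic component of $\Sym(\CT)$ is a fixed $SO(3)$-intertwiner tensored with a multiplicity space built from $\Hom$-spaces among $\bb{R}^{n_0}$, $\bb{R}^{n_1}$ and $\bb{R}^{n_2}$; stripping off the equivariant factor turns each product $X*_{A}Y$ into a bilinear operation on these multiplicity spaces whose structure constants are $6j$-symbols, and one checks that up to those fixed scalars it is the restriction of ordinary matrix multiplication, possibly after a mutation. Thus every $SO(3)$-invariant Jordan $\CA$-multialgebra is isomorphic, as a multialgebra, to a Jordan $\CA'$-multialgebra inside $\Sym(\bb{R}^m)$ with $m$ bounded in terms of $n_0,n_1,n_2$, and --- the feature special to three dimensions --- the genericity hypothesis on $\CA$ together with $\dim W_4=9$ forces $\CA'$ to contain a rich, generic family of multiplications. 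The third step is to finish using this paper's classification of faithful representations of formally real Jordan algebras in $\Sym(\bb{R}^m)$, together with the facts that every Jordan subalgebra of $\Sym(\bb{R}^m)$ is \complete\ for $m<8$ and that any subalgebra containing a matrix with $m$ distinct eigenvalues is \complete. These reduce the problem to a short list of ``exceptional'' (spin-factor / Clifford-type) incomplete subalgebras, which arise only for larger $m$, i.e.\ when $n_0,n_1,n_2$ are large.

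The main obstacle is exactly this last step in the range of unbounded multiplicities, where genuinely incomplete subalgebras of $\Sym(\bb{R}^m)$ really exist. What must be proved is a rigidity statement: a generically chosen weight-$4$ intertwiner in $\Sym(\CT)$ admits no invariant subspace of the special form demanded by an exceptional incomplete subalgebra, so that closure under the weight-$4$ multiplications in $\CA$ is incompatible with in\completeness. This is precisely the mechanism that \emph{fails} in two dimensions, where the analog of $W_4$ is merely $2$-dimensional and cannot destroy the exceptional subalgebra --- hence the in\complete\ example (\ref{myex}). The complete analysis of the $SO(3)$-analog of the example of Section~\ref{sec:ex} carried out in Theorem~\ref{th:invJA}, and the results of \cite{gms00}, are the evidence that the needed rigidity does hold, but I do not see how to establish it in full generality, which is why the statement remains conjectural.
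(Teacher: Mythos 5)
This statement is a \emph{conjecture}, and the paper offers no proof of it: the text explicitly says the only supporting evidence is the positive result of Theorem~\ref{th:invJA} (the case $n_{0}=n_{2}=0$, $\CA=W_{2}\otimes\bb{R}I_{n_{1}}$ from \cite{gms00}), the hand-verification for $n_{0}=n_{1}=n_{2}=1$, and the dimension-counting heuristics of Section~\ref{sec:struct}. You correctly recognize this and end by conceding that the key step cannot be carried out, so there is no question of a completed proof here; what you have written is a strategy, and it is broadly the same strategy the paper's own rationale gestures at (reduce to the 3- and 4-chain conditions (\ref{34chain}), exploit the classification of Jordan subalgebras of $\Sym(\bb{R}^{n})$ and the genericity of \completeness, and argue that a generic $W_{4}$-component of $\CA$ is incompatible with the exceptional spin-factor blocks). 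Your identification of the missing ingredient --- a rigidity statement for generic weight-$4$ intertwiners against the Clifford-type incomplete subalgebras, and the observation that the smallness of the $2$D analog of $W_{4}$ is exactly why the mechanism fails in $2$D --- matches the paper's own diagnosis in Sections~\ref{sec:struct} and \ref{sec:cex}.

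One caution about your second step: the claim that every $SO(3)$-invariant Jordan $\CA$-multialgebra is isomorphic \emph{as a multialgebra} to a Jordan $\CA'$-multialgebra inside some $\Sym(\bb{R}^{m})$ is asserted via ``one checks'' but is not obvious, and in the one case that has been fully worked out it is not literally what happens: Theorem~\ref{th:invJA} describes the invariant multialgebras as $(\End(W_{1})\otimes\CB)_{\rm sym}$ with $\CB$ an associative, transpose-closed subalgebra of $\End(\bb{R}^{n})$ --- the multiplicity-space datum is an associative algebra, not a Jordan subalgebra of symmetric matrices, and the passage between the two pictures (including the $6j$-symbol scalars, which can vanish or degenerate for particular weights) is precisely where the subrepresentation-semiring machinery of \cite{sage01,sage05,sage05a,sage08} is needed. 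So even the ``reduction'' half of your plan, not only the final rigidity step, would require genuine work; as it stands the proposal is a reasonable research program consistent with the paper, not a proof, and the statement must remain a conjecture.
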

The rationale behind this conjecture is a positive result established in
\cite{gms00} (see also Section~\ref{sec:ex}), when $n_{0}=n_{2}=0$ and
$\CA=W_{2}\otimes\bb{R}I_{n_{1}}$, where $I_{n}$ denotes the $n\times n$
identity matrix. The conjecture is stated only for $SO(3)$-invariant Jordan
$\CA$-multialgebras, since this paper settles the $SO(2)$-invariant version of
the conjecture in the negative (see Section~\ref{sec:cex}). In addition to the
results of Section~\ref{sec:ex} we have also computed a complete list of
Jordan $\CA$-multialgebras for $n_{0}=n_{1}=n_{2}=1$ in \cite{gms00}. The
conjecture was then verified by hand for each individual Jordan
$\CA$-multialgebra.

\section{Case study: Multifield response composite materials}
\setcounter{equation}{0} 
\label{sec:ex}
Multifield linear response materials were considered in
\cite{mish89a,mish89b}. In this context $n$ coupled potential fields
$\BE=(\Grad\phi_{1},\ldots,\Grad\phi_{n})$ induce $n$ conjugate fluxes
$\BJ=(\Bj_{1},\ldots,\Bj_{n})$ satisfying
\[
\Div\Bj_{1}=\ldots=\Div\Bj_{n}=0.
\]
Thermoelectric properties of composites would fit in this context with $n=2$.
For general values of $n$ the space $\CT$ is a direct sum of $n$ copies of $\bb{R}^{d}$:
\[
\CT=\underbrace{\bb{R}^{d}\oplus\ldots\oplus\bb{R}^{d}}_{n}=V_{1}\otimes\bb{R}^{n},\quad d=2,3,
\]
where $V_{1}$ is the $SO(d)$ irrep of weight 1, i.e. $V_{1}=\bb{R}^{d}$ with
standard action of $SO(d)$. The space $\CA$ was computed in \cite{gms00}.
\[
\CA=W_{2}\otimes I_{n}\subset\Sym(\CT),
\]
where $W_{2}\subset\Sym(V_{1})$ is the $SO(d)$ irrep of weight 2, denoting the
space of symmetric, trace-free $d\times d$ matrices where $SO(d)$ acts by
conjugation. For the case $d=3$ all $SO(3)$-invariant Jordan
$\CA$-multialgebras have been computed in \cite{gms00}. The result is given in
Theorem~\ref{th:invJA} below.
\begin{theorem}
  \label{th:invJA}
  Let $\Pi$ be an $SO(3)$-invariant Jordan $\CA$-multialgebra in
  $\Sym(V_{1}\otimes\bb{R}^{n})$, where $\CA=W_{2}\otimes I_{n}$. Then there
  exists an associative subalgebra $\CB$ of $\End(\bb{R}^{n})$, such that
  $\CB^{T}=\CB$ and $\Pi=(\End(W_{1})\otimes\CB)_{{\rm sym}}$.
\end{theorem}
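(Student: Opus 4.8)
The plan is to decompose $\Sym(V_1\otimes\bb{R}^n)$ under the $SO(3)$-action and identify which pieces a Jordan $\CA$-multialgebra can contain. Since $V_1=\bb{R}^3$ is the weight-$1$ irrep, we have the standard identification $\End(V_1\otimes\bb{R}^n)\cong\End(V_1)\otimes\End(\bb{R}^n)$, and $\End(V_1)\cong V_1\otimes V_1\cong W_0\oplus W_1\oplus W_2$ as an $SO(3)$-module (scalars, antisymmetric part, symmetric trace-free part). First I would fix an $SO(3)$-invariant inner product so that $\Sym(\CT)$ is the $SO(3)$-invariant complement issue is clean: a symmetric operator on $V_1\otimes\bb{R}^n$ can be written as $\sum_\alpha M_\alpha\otimes B_\alpha$ where the $M_\alpha$ run over a basis of $\End(V_1)$ and $B_\alpha\in\End(\bb{R}^n)$, with the symmetry constraint coupling $M_\alpha^T$ to $B_\alpha^T$. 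The key structural fact is that an $SO(3)$-invariant subspace $\Pi$ must be built only from $SO(3)$-invariant combinations, and because $W_0, W_1, W_2$ are pairwise non-isomorphic, an invariant $\Pi$ decomposes as $\Pi = (W_0\otimes\CB_0)\oplus(W_1\otimes\CB_1)\oplus(W_2\otimes\CB_2)$ for subspaces $\CB_i\subset\End(\bb{R}^n)$ (with $\CB_0,\CB_2$ landing in the symmetric part, $\CB_1$ in the antisymmetric part, to keep $\Pi\subset\Sym(\CT)$). The claim to be proved is then precisely that the $W_2$-component must be ``filled'' (so $\CB_2 = \CB_{\rm sym}$ for some associative $\CB$) and forces the other components to be $\CB_{\rm skew}$ and $\CB_{\rm sym}$ for the same $\CB$.

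**Using the multiplication.** Next I would unpack the closure condition $KAK\in\Pi$ for $K\in\Pi$, $A\in\CA=W_2\otimes I_n$. Writing $A = w\otimes I_n$ with $w\in W_2\subset\Sym_0(V_1)$ and $K=\sum M_\alpha\otimes B_\alpha$, the product $KAK$ expands into terms $M_\alpha w M_\beta\otimes B_\alpha B_\beta$. The heart of the argument is a collection of identities in $\End(V_1)$: products like $(\text{scalar})\cdot w\cdot(\text{scalar})$, $w_1 w w_2$ for $w_i\in W_2$, $s\, w\, w'$ for $s\in W_1$ (antisymmetric), etc., projected back onto $W_0\oplus W_1\oplus W_2$. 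I would compute these using the standard $3\times 3$ matrix algebra — e.g. for symmetric trace-free $3\times3$ matrices $w, w'$, the product $ww'$ has symmetric part with trace $\mathrm{tr}(ww')$ and antisymmetric part corresponding to a vector, and by varying $w, w'$ over $W_2$ one generates all of $W_0$, $W_1$, $W_2$ in the various slots. Polarizing the quadratic condition $KAK\in\Pi$ gives $K_1 A K_2 + K_2 A K_1\in\Pi$, and feeding in $K$'s supported on single isotypic components lets me read off closure conditions on the $\CB_i$: for instance, taking $K_1, K_2$ both in the $W_2$-slot yields that $\CB_2 B\CB_2$-type products (associative products on the $\bb{R}^n$ side) must lie back in $\CB_2 \oplus\CB_0$, and cross terms force relations tying $\CB_0, \CB_1, \CB_2$ together. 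The upshot I expect: $\CB_2$ is closed under the associative product $B_1\mapsto B_1 B_2$ (made symmetric), hence defines $\CB := $ associative closure, and $\CB_0=\CB_{\rm sym}$, $\CB_1=\CB_{\rm skew}$, giving $\Pi = (\End(W_1)\otimes\CB)_{\rm sym}$.

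**The main obstacle.** The delicate step is showing that once $\Pi$ is invariant and closed under \emph{all} the products $*_A$ with $A$ ranging over the full irrep $W_2\otimes I_n$ (not just one $A$), the subspace $\CB_2\subset\Sym(\bb{R}^n)$ is forced to be the symmetric part of an \emph{associative} subalgebra rather than merely a Jordan subalgebra of $\Sym(\bb{R}^n)$. This is exactly where the multi-algebra structure (several multiplications, indexed by a whole irrep) buys more than a single Jordan product would: the various $M_\alpha w M_\beta$ as $w$ varies span enough of $\End(V_1)$ that the induced conditions on the $\CB_i$ include the $3$-chain and $4$-chain identities of (\ref{34chain}), forcing completeness on the $\bb{R}^n$ side. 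Concretely I would need a lemma: if $\CB_2\subset\Sym(\bb{R}^n)$ satisfies $B_1 B_2 B_3 + B_3 B_2 B_1\in\CB_2$ and the analogous $4$-term relation for all $B_i\in\CB_2$, then by Cohn's theorem $\CB_2 = \CB_{\rm sym}$ for $\CB=\mathfrak A(\CB_2)$ associative with $\CB^T=\CB$. Deriving those chain identities from $KAK\in\Pi$ is the computational crux — it amounts to choosing $K_i$ supported in the $W_1$- and $W_2$-slots and $A$'s so that the $V_1$-side factors multiply to a scalar while the $\bb{R}^n$-side assembles the desired chain. I would organize this by first handling the ``diagonal'' case where $\Pi$ is concentrated in the $W_1$-slot (recovering $\End(W_1)\otimes\CB$ directly), then showing any invariant multialgebra must contain such a piece and is determined by it.
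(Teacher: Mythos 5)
First, a caveat: the paper does not actually prove Theorem~\ref{th:invJA}; it quotes the result from \cite{gms00}. So your proposal can only be measured against what a correct proof must contain. Your skeleton is right: the isotypic decomposition $\Pi=W_{0}\otimes\CB_{0}\oplus W_{1}\otimes\CB_{1}\oplus W_{2}\otimes\CB_{2}$ with $\CB_{0},\CB_{2}\subset\Sym(\bb{R}^{n})$, $\CB_{1}\subset\Skew(\bb{R}^{n})$ is correct and is the right starting point, and expanding the polarized condition $K_{1}AK_{2}+K_{2}AK_{1}\in\Pi$ term by term is indeed how the conditions on the $\CB_{i}$ are extracted. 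But there are two genuine gaps. The first is that you misidentify the mechanism that forces \emph{associativity}. You propose to derive the 3-chain and 4-chain identities for $\CB_{2}$ and then invoke Cohn's theorem, and you merely assert that these identities ``are included'' among the induced conditions. In fact the chain identities are not needed and their derivation is not where the content lies: for $K_{i}=w_{i}\otimes B_{i}$ with $w_{i}\in W_{2}$, $B_{i}\in\CB_{2}$ and $A=w\otimes I_{n}$, one has $K_{1}AK_{2}+K_{2}AK_{1}=S\otimes\{B_{1},B_{2}\}+T\otimes[B_{1},B_{2}]$ where $S$ and $T$ are the symmetric and skew parts of $w_{1}ww_{2}$; since the trace of $S$, the traceless part of $S$, and $T$ can be made nonzero independently, projecting onto the three isotypic components forces $\{B_{1},B_{2}\}\in\CB_{0}\cap\CB_{2}$ \emph{and} $[B_{1},B_{2}]\in\CB_{1}$ simultaneously. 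Together with the analogous computations for the mixed and $W_{1}$--$W_{1}$ pairings, this shows directly that $\CB:=\CB_{2}\oplus\CB_{1}$ is closed under the full associative product $B_{1}B_{2}=\frac{1}{2}\{B_{1},B_{2}\}+\frac{1}{2}[B_{1},B_{2}]$, provided one knows $\CB_{0}=\CB_{2}$. The quadratic closure condition already delivers associativity because the $W_{1}$-component of $\Sym(\CT)$ captures commutators; no appeal to Cohn's theorem is required, and the chain identities then hold trivially.

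The second, more serious, gap is that you never address why $\CB_{0}=\CB_{2}$ (equivalently, why the $W_{0}$- and $W_{2}$-slots carry the \emph{same} symmetric subspace $\CB_{\rm sym}$); you simply list this as ``the upshot I expect.'' The quadratic relations only give inclusions such as $\CB_{2}*\CB_{2}\subset\CB_{0}\cap\CB_{2}$ and $\CB_{0}*\CB_{0}\subset\CB_{2}$, which do not by themselves identify the two spaces. One needs an extra argument, e.g.: for symmetric $B\in\CB_{2}$ all powers $B^{k}$, $k\ge 2$, lie in $\CB_{0}\cap\CB_{2}$ by iterating the anticommutator relation, and a polynomial-interpolation (spectral) argument shows $B\in\Span\{B^{2},B^{3},\ldots\}$, whence $\CB_{2}\subset\CB_{0}$, and symmetrically $\CB_{0}\subset\CB_{2}$. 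This is the same kind of spectral-projection reasoning the paper uses in Section~\ref{sec:ssc}, and without it your proof does not close. Finally, your proposed organization (``first handle the case where $\Pi$ is concentrated in the $W_{1}$-slot'') is a dead end: if $\CB_{0}=\CB_{2}=0$ then the relations force $C^{2}=0$ for every skew $C\in\CB_{1}$, hence $\Pi=0$.
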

An immediate consequence of this theorem is that \emph{all} $SO(3)$-invariant
Jordan $\CA$-multialgebras are \complete.

When $d=2$ we can label points in $V_{1}=\bb{R}^{2}$ with standard action
of $SO(2)$, by complex numbers, so that the action of $R_{\Gth}\in SO(2)$ on
$z\in\bb{C}$ is given by $e^{i\Gth}z$. Here $R_{\Gth}$ denotes the rotation
through the angle $\Gth$ in counterclockwise (positive) direction. We write
\[
\CT=\bb{C}\otimes\bb{R}^{n}=\bb{C}^{n},\qquad
R_{\Gth}\cdot\Bu=e^{i\Gth}\Bu,\qquad\Bu\in\bb{C}^{n},\ R_{\Gth}\in SO(2).
\]
The $SO(2)$-invariant inner product on $\CT$ is $(\Bu,\Bv)_{\CT}=\re\av{\Bu,\Bv}$,
where $\av{\cdot,\cdot}$ is the standard Hermitean inner product on
$\bb{C}^{n}$. Every $K\in\End(\CT)=\End_{\bb{R}}(\bb{C}^{n})$ is uniquely
determined by two complex $n\times n$ matrices $X$ and $Y$ via its action on
$\Bu\in\bb{C}^{n}$:
\[
K\Bu=X\Bu+Y\bra{\Bu}.
\]
Therefore, we will write $K(X,Y)$ to identify elements of $\End(\CT)$.
We easily compute
\[
\Sym(\CT)=\{K(X,Y):X\in\CH(\bb{C}^{n}),\ Y\in\Sym(\bb{C}^{n})\},
\]
where $\CH(\bb{C}^{n})$ denotes the set of all complex Hermitean $n\times n$
matrices. In this notation
\[
\CA=\{K(0,zI_{n}):z\in\bb{C}\},\qquad R_{\Gth}\cdot K(X,Y)=K(X,e^{2\Gth}Y). 
\]
Therefore,
an arbitrary $SO(2)$ submodule of $\Sym(\CT)$ is given by
\[
\Pi=\Pi_{\CL,\CM}=\{(X,Y):X\in\CL\subset\CH(\bb{C}^{n}),\ Y\in\CM\subset\Sym(\bb{C}^{n})\},
\]
where $\CL$ can be any real subspace of $\CH(\bb{C}^{n}$ and $\CM$ can be any
complex subspace of $\Sym(\bb{C}^{n})$. If an $SO(2)$ submodule $\Pi$ is also
an $\CA$-multialgebra, then the subspaces $\CL$ and $\CM$ have to satisfy
\begin{equation}
    \label{2dJMA}
Y^{2}+XX^{T}\in\CM,\quad YX+XY^{*}\in\CL\text{ for all }X\in\CL,\ Y\in\CM.    
  \end{equation}
The 3-chain condition is equivalent to
\begin{equation}
  \label{3chain}
cX_{1}X_{2}^{T}X_{3}+\bra{c}X_{3}X_{2}^{T}X_{1}\in\CL,\text{ for all
}c\in\bb{C},\text{ and all }\{X_{1},X_{2},X_{3}\}\subset\CL.  
\end{equation}
The 4-chain condition is a lot more complicated.

The complete characterization of all solutions of (\ref{2dJMA}) is
unknown. However, some families of solutions can be easily identified.
We will focus on one such family, since it will lead us to in\complete\
$SO(2)$-invariant Jordan $\CA$-multialgebras, in contrast to
Theorem~\ref{th:invJA} in 3D.

Let $\Pi_{0}\subset\Sym(\bb{R}^{n})$ be a Jordan subalgebra of
$\Sym(\bb{R}^{n})$, i.e. $\Pi_{0}$ is a subspace of $\Sym(\bb{R}^{n})$
closed with respect to the Jordan product
\begin{equation}
  \label{Jprod}
A*B=\hf(AB+BA).  
\end{equation}
Then
\begin{equation}
  \label{zeroPi}
\CL=\{0\},\quad\CM=\Pi_{0}\otimes\bb{C}
\end{equation}
solves (\ref{2dJMA}). The 3-chain condition (\ref{3chain}) is therefore
trivially satisfied. The 4-chain condition from (\ref{34chain})
reduces to the classical 4-chain condition 
\begin{equation}
  \label{4chcl}
  Y_{1}Y_{2}Y_{3}Y_{4}+Y_{4}Y_{3}Y_{2}Y_{1}\in\Pi_{0}
  \text{ for all }\{Y_{1},Y_{2},Y_{3},Y_{4}\}\subset\Pi_{0}
\end{equation}

Our next goal is to characterize all Jordan subalgebras $\Pi_{0}$ of $\Sym(\bb{R}^{n})$
up to orthogonal conjugation. We observe that
all Jordan subalgebras of $\Sym(\bb{R}^{n})$ are formally real, i.e. if 
\[
X_{1}^{2}+\ldots+X_{m}^{2}=0,\quad m\ge 1,\ \{X_{1},\ldots,X_{m}\}\subset\Sym(\bb{R}^{n})
\]
then $X_{1}=\ldots=X_{m}=0$. The complete classification of
all formally real Jordan algebras, up to an isomorphism, has been achieved by
Jordan, von Neumann and Wigner in \cite{jvnw34}. Our goal is to go a little
further and classify all \emph{faithful representations} of formally real Jordan
algebras in $\Sym(\bb{R}^{n})$. We note that \completeness\ in the sense of
Definition~\ref{def:comp} is a property of the representation of an algebra,
not of the algebra itself. In fact, our results will produce an example of
two different faithful representations of the same algebra in
$\Sym(\bb{R}^{n})$, one of which is \complete, while the other is not.

\section{Formally real Jordan algebras}
\setcounter{equation}{0} 
\label{sec:frja}
In this section we review the complete classification by Jordan, von Neumann
and Wigner of all formally real Jordan algebras for the convenience of the
reader. In addition to the classification itself, we will also need many of
their intermediate results in \cite{jvnw34}.

The first set of statements (Theorem~\ref{th:jvnw1}) refers to an arbitrary
formally real Jordan algebra.
\begin{theorem}~
  \label{th:jvnw1}
  \begin{itemize}
  \item[(a)] There is a unique element $\bb{I}_{\Pi}\in\Pi$ such that
    $\bb{I}_{\Pi}*A=A$ for all $A\in\Pi$.
  \item[(b)] The subalgebra $\av{\bb{I}_{\Pi},A}$ generated by $A\in\Pi$ and
    $\bb{I}_{\Pi}$ contains pairwise
    orthogonal non-zero
    idempotents\footnote{i.e. $E_{\nu}*E_{\Gs}=\Gd_{\nu\Gs}E_{\nu}$,
      $\nu=1,\ldots,s$, $\Gs=1,\ldots,s$.}
    $E_{1},\ldots,E_{s}$ such that
    \begin{equation}
      \label{Adecomp}
E_{1}+\ldots+E_{s}=\bb{I}_{\Pi},\qquad\Gl_{1}E_{1}+\ldots+\Gl_{s}E_{s}=A      
    \end{equation}
for some $\{\Gl_{1},\ldots,\Gl_{s}\}\subset\bb{R}$.
\item[(c)] An idempotent is called unresolvable, if it cannot be written as a
  sum of two orthogonal non-zero idempotents. There exist unresolvable, pairwise
    orthogonal idempotents $E_{1},\ldots,E_{r}$ such that
    \begin{equation}
      \label{decomp1}
E_{1}+\ldots+E_{r}=\bb{I}_{\Pi}.      
    \end{equation}
    The resolution of unity (\ref{decomp1}) is not unique but the number $r$ of
    unresolvable pairwise orthogonal idempotents in it is always the same.
\item[(d)] If $\CA\subset\Pi$ is a proper ideal then $\Pi=\CA\oplus\CB$ as a direct
  sum of algebras, where $\CB=\{B\in\Pi:A*B=0,\text{ for all }A\in\CA\}$ is
  the complementary ideal.
\item[(e)] Any algebra $\Pi$ is a direct (orthogonal) sum of simple algebras
  \begin{equation}
    \label{sdecomp}
\Pi=\Pi_{1}\oplus\ldots\oplus\Pi_{m},    
  \end{equation}
where each component $\Pi_{j}$ is identified with a minimal proper ideal in $\Pi$.
  \end{itemize}
\end{theorem}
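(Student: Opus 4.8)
The plan is to establish parts (a)--(e) in sequence, using the formal reality of $\Pi$ as the central hypothesis throughout, and reducing each statement to finite-dimensional linear-algebra facts about the multiplication operators $L_A : X \mapsto A*X$. First, for part (a), I would proceed as follows. Since $\Pi$ is finite-dimensional, pick any basis and consider the symmetric bilinear trace form $\tau(A,B) = \Trc(L_{A*B})$ or simply $\tau(A,B)=\Trc(L_A L_B)$; formal reality will be used to show this form is positive definite (if $\tau(A,A)=0$ then, expanding in a basis of simultaneously-diagonalizable-looking elements, one forces $A=0$ — this is where a sum-of-squares argument enters). With a nondegenerate invariant form in hand, the existence of a two-sided unit $\bb{I}_\Pi$ follows because left multiplication operators $\{L_A\}$ form a commutative-enough family (power-associativity of Jordan algebras) and one can solve $L_A \bb{I}=A$ consistently; uniqueness is immediate from $\bb{I}*\bb{I}'=\bb{I}=\bb{I}'$. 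Alternatively, and more in the spirit of \cite{jvnw34}, I would first prove the spectral statement (b) for the single-generated subalgebra $\av{\bb{I}_\Pi,A}$ and extract the unit from there; but this is circular unless the unit is produced first, so I prefer the trace-form route.

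For part (b), the key observation is that $\av{\bb{I}_\Pi,A}$ is a commutative associative algebra: since it is generated by a single element $A$ together with $\bb{I}_\Pi$, and Jordan algebras are power-associative, all products of powers of $A$ associate and commute, so $\av{\bb{I}_\Pi,A} \cong \mathbb{R}[t]/(p(t))$ for the minimal polynomial $p$ of $A$. Formal reality forces $p$ to have only real roots and to be squarefree (a repeated real root or a complex-conjugate pair would produce a nonzero nilpotent $N$ with $N^2=0$, contradicting formal reality via $N^2 + 0^2 + \cdots = 0$). Hence $\mathbb{R}[t]/(p(t)) \cong \mathbb{R}^s$ as algebras, and the idempotents $E_\nu$ are the preimages of the standard basis vectors; the decomposition \eqref{Adecomp} is then just the Chinese Remainder decomposition, with $\Gl_\nu$ the roots of $p$. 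For part (c), I would iterate: if some $E_\nu$ is resolvable, split it; this process terminates by a dimension/rank count. The invariance of the number $r$ is the Jordan-algebra analogue of the well-definedness of rank — I would prove it by showing that for any maximal family of pairwise orthogonal nonzero idempotents summing to $\bb{I}_\Pi$, the cardinality equals $\deg$ of the minimal polynomial of a \emph{generic} (regular) element of $\Pi$, which is an invariant of $\Pi$. This genericity/regularity argument is the one place I expect real friction: one must show regular elements exist (Zariski-density, using that $\Pi$ is a real vector space and the discriminant of the minimal polynomial is a nonzero polynomial function) and that the Peirce decomposition relative to a frame behaves compatibly — this is the technical heart.

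For part (d), given a proper ideal $\CA$, I would use the positive-definite invariant trace form $\tau$ from part (a): set $\CB = \CA^\perp$ with respect to $\tau$. Invariance of $\tau$ (i.e. $\tau(A*B,C)=\tau(A,B*C)$) shows $\CB$ is an ideal, and $\CA \cap \CB = 0$ since $\tau$ restricted to the ideal $\CA$ is still positive definite (formal reality again, applied inside $\CA$). A dimension count gives $\Pi = \CA \oplus \CB$ as vector spaces, and $\CA*\CB \subseteq \CA\cap\CB = 0$ makes it a direct sum of algebras; finally $\CB = \{B : A*B=0\ \forall A\in\CA\}$ because that set is exactly $\CA^\perp$ under the nondegenerate $\tau$ (using that $\CA$ has a unit element of its own, or that $A*B=0$ for all $A\in\CA$ iff $\tau(A,B)=0$ for all such $A$). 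Part (e) is then a straightforward induction: if $\Pi$ is not simple it has a proper ideal, split off a minimal one via (d), and recurse on the complementary ideal; minimality of the pieces and their identification with minimal proper ideals of $\Pi$ follows by tracking which summand an ideal of $\Pi$ projects into, using again that $\Pi_i * \Pi_j = 0$ for $i\neq j$. Throughout, the recurring engine is: \emph{formal reality} $\Longrightarrow$ the invariant trace form is positive definite $\Longrightarrow$ orthogonal complements of ideals are complementary ideals, plus \emph{power-associativity} $\Longrightarrow$ single-generated subalgebras are $\prod \mathbb{R}$. The main obstacle, as noted, is the invariance of $r$ in (c), which genuinely requires the regular-element / Peirce-decomposition machinery of \cite{jvnw34} rather than a soft argument.
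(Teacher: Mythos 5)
The paper gives no proof of Theorem~\ref{th:jvnw1}: it is stated as a review of results quoted from Jordan--von~Neumann--Wigner \cite{jvnw34}, so there is no internal argument to compare yours against. Judged on its own terms, your outline of (b), (d) and (e) is sound and is essentially the classical one: power-associativity plus formal reality gives $\av{\bb{I}_{\Pi},A}\cong\bb{R}[t]/(p(t))$ with $p$ squarefree and real-rooted, hence $\cong\bb{R}^{s}$; the trace form $\tau(A,B)=\Trc L_{A*B}$ is associative and positive definite, so orthogonal complements of ideals are complementary ideals; and (e) follows by induction. Two points, however, are genuine gaps rather than compressible details.

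First, in (a) the assertion that one can ``solve $L_{A}\bb{I}=A$ consistently'' because the $L_{A}$ form a ``commutative-enough family'' is not an argument: the simultaneous solvability of that linear system over all $A\in\Pi$ is exactly the statement to be proved, and nondegeneracy of $\tau$ does not by itself produce a solution. Moreover, the idempotent-first route you reject as circular is not circular: the analysis of (b) applies verbatim to the \emph{non-unital} subalgebra $\Span\{A,A^{2},A^{3},\ldots\}$, which is associative and commutative by power-associativity and, by formal reality, isomorphic to a product of copies of $\bb{R}$; this yields idempotents with no unit in hand. One then takes an idempotent $e$ maximizing $\Trc L_{e}$ and uses the Peirce decomposition $\Pi=\Pi_{1}(e)\oplus\Pi_{1/2}(e)\oplus\Pi_{0}(e)$ together with formal reality to show $\Pi_{0}(e)=\Pi_{1/2}(e)=\{0\}$, i.e.\ $e=\bb{I}_{\Pi}$. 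That is the standard non-circular proof, and your proposal as written skips it. Second, in (c) you correctly identify the invariance of $r$ as the hard point, but the regular-element/rank argument you sketch only shows that frames obtained by refining the spectral system of a regular element have cardinality $r$; to conclude that \emph{every} complete system of pairwise orthogonal unresolvable idempotents has the same cardinality one needs either the equality of $\Trc L_{E}$ over all primitive idempotents $E$ in each simple summand (which logically requires (d) and (e) first, i.e.\ a reordering of the proof) or the direct combinatorial argument of \cite{jvnw34}. As it stands this step is a gesture, not a proof.
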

The second set of results (Theorem~\ref{th:jvnw2}) refers to a simple formally
real Jordan algebra $\Pi$, in which the decomposition of unity (\ref{decomp1})
has been chosen. Let
\[
\CM^{\rho\Gs}=\left\{A\in\Pi:E_{\tau}*A=\hf(\Gd_{\rho\tau}+\Gd_{\Gs\tau})A
\text{ for all }\tau=1,\ldots,r\right\},\quad\rho,\Gs=1,\ldots,r.
\]
Clearly, $\CM^{\rho\Gs}=\CM^{\Gs\rho}$.
\begin{theorem}~
  \label{th:jvnw2}
  \begin{itemize}
  \item[(a)] 
$\displaystyle
\Pi=\bigoplus_{1\le\rho\le\Gs\le r}\CM^{\rho\Gs},
$
as a direct sum of vector spaces.
\item[(b)] $\dim\CM^{\Gs\Gs}=1$, $p=\dim\CM^{\rho\Gs}$ is independent of
  $\Gs$ and $\rho$, as long as $\Gs\not=\rho$.
\item[(c)] The spaces $\CM^{\rho\Gs}$ have bases
  $\{X_{1}^{\rho\Gs},\ldots,X_{p}^{\rho\Gs}\}$ such that
\[
X_{\mu}^{\rho\Gs}*X_{\nu}^{\rho\Gs}=\Gd_{\mu\nu}(E_{\rho}+E_{\Gs}),\quad
\mu,\nu=1,\ldots,p,\ 1\le\rho<\Gs\le r.
\] 
  \end{itemize}
\end{theorem}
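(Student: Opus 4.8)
The plan is to prove Theorem~\ref{th:jvnw2} by exploiting the Peirce decomposition machinery relative to the resolution of unity $\bb{I}_{\Pi}=E_{1}+\ldots+E_{r}$ guaranteed by Theorem~\ref{th:jvnw1}(c). First I would verify part (a): for each $E_{\tau}$ the operator $L_{E_{\tau}}:A\mapsto E_{\tau}*A$ is a self-adjoint operator on $\Pi$ (with respect to the trace form $A,B\mapsto \Trc(L_{A*B})$, which is positive definite precisely because $\Pi$ is formally real), and since $E_\tau$ is idempotent, $L_{E_\tau}$ satisfies the minimal polynomial dividing $t(t-\tfrac12)(t-1)$ --- this is the standard Peirce identity $2L_E^3-3L_E^2+L_E=0$ valid in any Jordan algebra. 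Because the $E_\tau$ are pairwise orthogonal idempotents summing to the identity, the operators $L_{E_\tau}$ commute and can be simultaneously diagonalized, and the joint eigenspace where $L_{E_\tau}$ acts by $\tfrac12(\Gd_{\rho\tau}+\Gd_{\Gs\tau})$ is exactly $\CM^{\rho\Gs}$; summing over all pairs $\rho\le\Gs$ gives the whole space. One must also check that no other joint-eigenvalue pattern is possible --- this follows because the possible eigenvalues of each $L_{E_\tau}$ are $\{0,\tfrac12,1\}$, the eigenvalue $1$ can occur for at most one $\tau$ (orthogonality forces $E_\tau * E_\tau'=0$), and eigenvalue $\tfrac12$ for at most two $\tau$'s, while the pattern is further constrained by $\sum_\tau L_{E_\tau}=\mathrm{id}$.

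Next, for part (b), the key point $\dim\CM^{\Gs\Gs}=1$ is where \emph{unresolvability} of the $E_\Gs$ enters: $\CM^{\Gs\Gs}$ is itself a Jordan subalgebra with unit $E_\Gs$, it is formally real, and if it had dimension larger than $1$ it would --- by Theorem~\ref{th:jvnw1}(b),(c) applied inside $\CM^{\Gs\Gs}$ --- contain an idempotent $F$ with $0\ne F\ne E_\Gs$, whence $F$ and $E_\Gs-F$ would be nonzero orthogonal idempotents summing to $E_\Gs$, contradicting unresolvability. For the equidimensionality $\dim\CM^{\rho\Gs}=p$ independent of the pair, I would construct, for any two distinct pairs sharing an index (say $\{\rho,\Gs\}$ and $\{\rho,\Gj\}$), an explicit linear isomorphism between $\CM^{\rho\Gs}$ and $\CM^{\rho\Gj}$ built out of multiplication by a fixed element of $\CM^{\Gs\Gj}$; using simplicity to guarantee such a connecting element is nonzero, and the Peirce multiplication rules $\CM^{\rho\Gs}*\CM^{\Gs\Gj}\subseteq\CM^{\rho\Gj}$ together with $\CM^{\rho\Gs}*\CM^{\rho\Gj}\subseteq\CM^{\Gs\Gj}$ to see the maps are mutually inverse up to scalars. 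Since the graph on $\{1,\ldots,r\}$ with edges $\{\rho,\Gs\}$ such that $\CM^{\rho\Gs}\ne 0$ is connected (by simplicity --- otherwise the $E_\Gs$ in one component would span an ideal), this propagates a common value $p$ to all off-diagonal blocks, and also shows $\CM^{\rho\Gs}\ne 0$ for every $\rho\ne\Gs$.

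Finally, part (c) is the construction of an orthonormal-type basis. Inside each off-diagonal block I would use the positive definite trace form again: the quadratic map $X\mapsto X*X$ sends $\CM^{\rho\Gs}$ into $\CM^{\rho\rho}\oplus\CM^{\Gs\Gs}=\bb{R}E_\rho\oplus\bb{R}E_\Gs$, and by a symmetry argument (the Peirce reflection swapping $E_\rho\leftrightarrow E_\Gs$, which is a Jordan automorphism) one shows $X*X$ actually lands in $\bb{R}(E_\rho+E_\Gs)$, giving a quadratic form $q(X)$ on $\CM^{\rho\Gs}$ with $X*X=q(X)(E_\rho+E_\Gs)$; this $q$ is positive definite because $\Pi$ is formally real (if $q(X)\le 0$ one manufactures a sum of squares vanishing). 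Diagonalizing $q$ by an orthogonal change of basis and rescaling produces the desired $X_1^{\rho\Gs},\ldots,X_p^{\rho\Gs}$ with $X_\mu^{\rho\Gs}*X_\nu^{\rho\Gs}=\Gd_{\mu\nu}(E_\rho+E_\Gs)$ --- the off-diagonal relation $X_\mu*X_\nu=0$ for $\mu\ne\nu$ coming from polarizing $q$ and using that it is diagonal in the chosen basis. The main obstacle I anticipate is not any single identity but the bookkeeping in part (b): carefully tracking how the connecting isomorphisms behave under composition and handling the degenerate cases $r=1$ and $r=2$ separately, since the ``third index'' $\Gj$ used to connect $\CM^{\rho\Gs}$ to $\CM^{\rho\Gj}$ does not exist when $r\le 2$ --- in those cases one instead argues directly, for $r=1$ from unresolvability and for $r=2$ by noting there is only one off-diagonal block so equidimensionality is vacuous.
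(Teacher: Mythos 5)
The paper does not actually prove this statement: Section~\ref{sec:frja} is explicitly a review, and Theorem~\ref{th:jvnw2} is quoted from Jordan--von Neumann--Wigner \cite{jvnw34} without argument, so your proposal is being measured against the classical source rather than against anything in the text. As a reconstruction of that classical Peirce-decomposition proof it is structurally correct. Part (a) is sound once you supply the one missing justification, namely that pairwise orthogonal idempotents operator-commute (put $a=b=E_{\rho}$, $c=E_{\Gs}$ into the linearized Jordan identity to get $[L_{E_{\rho}},L_{E_{\Gs}}]+2[L_{E_{\rho}*E_{\Gs}},L_{E_{\rho}}]=0$, which vanishes by orthogonality); after that, commuting diagonalizable operators with spectra in $\{0,\tfrac{1}{2},1\}$ and $\sum_{\tau}L_{E_{\tau}}=\mathrm{id}$ leave exactly the patterns ``one eigenvalue $1$'' and ``two eigenvalues $\tfrac{1}{2}$''. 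Part (b) --- unresolvability forcing $\dim\CM^{\Gs\Gs}=1$, and the connecting operators $L_{X}$ with $X\in\CM^{\Gs\tau}$ combined with simplicity (connectedness of the incidence graph) propagating a common $p$ --- is the standard argument and is fine, including your handling of $r\le 2$.

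The one step that would fail as written is the ``Peirce reflection'' in part (c). The natural automorphism interchanging $E_{\rho}$ and $E_{\Gs}$ is $U_{v}$ for an element $v\in\CM^{\rho\Gs}$ with $v*v=E_{\rho}+E_{\Gs}$, and the existence of such a $v$ is precisely what part (c) asserts, so invoking it there is circular. The repair uses only tools you already introduced: the associative, positive-definite trace form $\av{A,B}=\Trc(L_{A*B})$. The Peirce rules give $X*X=aE_{\rho}+bE_{\Gs}$, and associativity yields $a\av{E_{\rho},E_{\rho}}=\av{X*X,E_{\rho}}=\av{X,E_{\rho}*X}=\tfrac{1}{2}\av{X,X}=b\av{E_{\Gs},E_{\Gs}}$, while $\av{E_{\rho},E_{\rho}}=\Trc(L_{E_{\rho}})=1+\tfrac{1}{2}p(r-1)$ is independent of $\rho$ by part (b); hence $a=b>0$ for $X\not=0$, and your diagonalization-plus-polarization step then produces the required basis. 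With that substitution the proposal is a complete and correct proof of the cited theorem.
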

The third result (Theorem~\ref{th:jvnw3}) is the well-known characterization
of all formally real simple Jordan algebras.
\begin{theorem}
  \label{th:jvnw3}
If $\Pi$ is a formally real
simple Jordan algebra then it is isomorphic to one of the following algebras
classified according to the number of indecomposable idempotents in the
resolution of unity (\ref{decomp1})
\begin{itemize}
\item $r=1$, $\bb{R}$
\item $r=2$, ``spin factors'' $\mathfrak{S}_{n}$, $n\ge 3$ defined by
$
\mathfrak{S}_{n}=\Span\{I_{\mathfrak{S}_{n}},s_{1},\ldots,s_{n-1}\},
$
where $I_{\mathfrak{S}_{n}}*I_{\mathfrak{S}_{n}}=I_{\mathfrak{S}_{n}}$, $I_{\mathfrak{S}_{n}}*s_{j}=s_{j}$, $s_{i}*s_{j}=\Gd_{ij}I_{\mathfrak{S}_{n}}$.
\item $r\ge 3$, $\Sym(\bb{R}^{r})$, $\mathfrak{H}(\bb{C}^{r})$,
  $\mathfrak{H}(\bb{H}^{r})$, denoting real symmetric $r\times r$
  matrices, complex Hermitean $r\times r$
  matrices and quaternionic Hermitean $r\times r$ matrices, respectively.
\item 27-dimensional exceptional Albert algebra $\mathfrak{M}_{3}^{8}$
  ($r=3$). It has no non-trivial representations in $\Sym(\bb{R}^{n})$.
\end{itemize}
\end{theorem}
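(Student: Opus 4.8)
The plan is to follow the classical Jordan--von Neumann--Wigner argument \cite{jvnw34} (see also \cite{jaco68}), organizing the proof by the number $r$ of indecomposable, pairwise orthogonal idempotents in the resolution of unity (\ref{decomp1}) and building on Theorems~\ref{th:jvnw1} and \ref{th:jvnw2}. The raw material is the Peirce decomposition $\Pi=\bigoplus_{1\le\rho\le\Gs\le r}\CM^{\rho\Gs}$ with $\dim\CM^{\Gs\Gs}=1$ and the normalized bases $\{X_{1}^{\rho\Gs},\ldots,X_{p}^{\rho\Gs}\}$ from Theorem~\ref{th:jvnw2}(c). One first records the standard consequence of simplicity that every pair $E_{\rho},E_{\Gs}$ is connected, i.e.\ $\CM^{\rho\Gs}\neq\{0\}$ and hence $p\ge 1$; otherwise the unconnected idempotents would split $\Pi$ as a direct sum of nonzero ideals.

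The cases $r=1$ and $r=2$ are elementary. If $r=1$ then $\Pi=\CM^{11}=\bb{R}\,\bb{I}_{\Pi}\cong\bb{R}$. If $r=2$, put $p=\dim\CM^{12}\ge 1$, so $\dim\Pi=p+2$, and set $n=p+2$, $I_{\mathfrak{S}_{n}}=\bb{I}_{\Pi}=E_{1}+E_{2}$, $s_{\mu}=X_{\mu}^{12}$ for $\mu=1,\ldots,p$, and $s_{p+1}=E_{1}-E_{2}$. Using $E_{\tau}*A=\hf A$ for $A\in\CM^{12}$, $\tau\in\{1,2\}$, together with Theorem~\ref{th:jvnw2}(c), one checks directly that $(E_{1}-E_{2})*(E_{1}-E_{2})=\bb{I}_{\Pi}$, $(E_{1}-E_{2})*X_{\mu}^{12}=0$, and $X_{\mu}^{12}*X_{\nu}^{12}=\Gd_{\mu\nu}\bb{I}_{\Pi}$, so $s_{i}*s_{j}=\Gd_{ij}\bb{I}_{\Pi}$ and $\Pi\cong\mathfrak{S}_{n}$ with $n=p+2\ge 3$.

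For $r\ge 3$ the proof rests on Jacobson's coordinatization theorem. From the multiplication of the off-diagonal Peirce spaces one manufactures, independently of the choice of indices, a unital real alternative algebra $\mathfrak{C}$ of dimension $p$ carrying an involution, and coordinatization identifies $\Pi$ with the algebra $\mathfrak{H}(\mathfrak{C}^{r})$ of $r\times r$ $\mathfrak{C}$-Hermitean matrices under the symmetrized product; a further computation shows $\mathfrak{C}$ must be associative once $r\ge 4$. Formal reality of $\Pi$ translates into positive-definiteness of the norm form of $\mathfrak{C}$, so $\mathfrak{C}$ is a real composition division algebra; by Hurwitz's theorem (Frobenius's in the associative range) $\mathfrak{C}\cong\bb{R}$, $\bb{C}$, $\bb{H}$, or $\bb{O}$, with $p\in\{1,2,4,8\}$. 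Hence for $r\ge 4$ we obtain exactly $\Sym(\bb{R}^{r})$, $\mathfrak{H}(\bb{C}^{r})$, $\mathfrak{H}(\bb{H}^{r})$, while for $r=3$ the octonionic case also survives and yields the $27$-dimensional exceptional algebra $\mathfrak{M}_{3}^{8}=\mathfrak{H}(\bb{O}^{3})$, completing the list. For the final assertion, $\Sym(\bb{R}^{n})$ with the product (\ref{Jprod}) is a special Jordan algebra, whereas $\mathfrak{M}_{3}^{8}$ is exceptional and therefore admits no injective Jordan homomorphism into any special Jordan algebra; since $\mathfrak{M}_{3}^{8}$ is simple, a non-injective homomorphism into $\Sym(\bb{R}^{n})$ must be zero, so its only representation in $\Sym(\bb{R}^{n})$ is trivial.

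The main obstacle is the $r\ge 3$ step, and within it the coordinatization theorem together with the dichotomy ``$\mathfrak{H}(\mathfrak{C}^{r})$ is Jordan $\Rightarrow$ $\mathfrak{C}$ alternative ($r=3$) or associative ($r\ge 4$)''. These are the genuinely technical parts of \cite{jvnw34,jaco68}, requiring systematic multilinear bookkeeping with Peirce components; rather than reprove them we would cite them, our own contribution being the reductions above and the formal-reality argument that pins $\mathfrak{C}$ down to a real division algebra.
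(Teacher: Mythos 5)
Your outline is correct, but be aware that the paper itself offers no proof of this statement: Section~\ref{sec:frja} is explicitly a review, and Theorem~\ref{th:jvnw3} is quoted as known from Jordan--von Neumann--Wigner \cite{jvnw34}, with the non-representability of $\mathfrak{M}_{3}^{8}$ attributed to Albert \cite{albert34}. Measured against that, your proposal supplies strictly more. The $r=1$ and $r=2$ cases are complete as written: the identities $(E_{1}-E_{2})*(E_{1}-E_{2})=\bb{I}_{\Pi}$, $(E_{1}-E_{2})*X_{\mu}^{12}=0$ and $X_{\mu}^{12}*X_{\nu}^{12}=\Gd_{\mu\nu}\bb{I}_{\Pi}$ follow at once from Theorem~\ref{th:jvnw2} and exhibit the spin-factor basis with $n=p+2\ge 3$ (consistent with $\mathfrak{S}_{3}\cong\Sym(\bb{R}^{2})$ in Remark~\ref{rem:jvnw}). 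Your argument for the last bullet is also exactly the intended one: the kernel of a Jordan homomorphism of the simple algebra $\mathfrak{M}_{3}^{8}$ into $\Sym(\bb{R}^{n})$ is zero or everything, and injectivity would make $\mathfrak{M}_{3}^{8}$ special, contradicting Albert's theorem. For $r\ge 3$ you correctly isolate the coordinatization theorem, the ``alternative for $r=3$, associative for $r\ge 4$'' dichotomy, and the Hurwitz/Frobenius classification of real composition division algebras as the technical core, and you defer these to \cite{jvnw34,jaco68}; since the paper treats the whole classification as citable background, that is an appropriate level of delegation, though one should note that the coordinatization route is Jacobson's later streamlining rather than the more computational path actually taken in \cite{jvnw34}. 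No gaps.
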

It was shown by Albert \cite{albert34} that $\mathfrak{M}_{3}^{8}$ cannot be
identified as a subspace of an associative algebra that is closed under the
multiplication (\ref{Jprod}).

\begin{remark}
  \label{rem:jvnw}
It will be useful to list the value of the invariant $p=\dim\CM^{\rho\Gs}$,
defined in Theorem~\ref{th:jvnw2}, for all the simple algebras from
Theorem~\ref{th:jvnw3}.
\begin{itemize}
\item $p=1$ for $\Sym(\bb{R}^{r})$, $r\ge 1$ ($\Sym(\bb{R}^{2})\cong\mathfrak{S}_{3}$),
\item $p=2$ for $\mathfrak{H}(\bb{C}^{r})$, $r\ge 2$ 
($\mathfrak{H}(\bb{C}^{2})\cong\mathfrak{S}_{4}$),
\item $p=3$ for $\mathfrak{S}_{5}$
\item $p=4$ for $\mathfrak{H}(\bb{H}^{r})$, $r\ge 2$ 
($\mathfrak{H}(\bb{H}^{2})\cong\mathfrak{S}_{6}$),
\item $p\ge 5$ for $\mathfrak{S}_{p+2}$.
\end{itemize}
For $\mathfrak{M}_{3}^{8}$ $p=8$.
\end{remark}

\section{The structure of Jordan subalgebras of $\Sym(\bb{R}^{n})$}
\setcounter{equation}{0} 
\label{sec:ssc}
\subsection{The non-singular Jordan algebras}
From now on $\Pi$ refers to a subalgebra of $\Sym(\bb{R}^{n})$, i.e. a
subspace closed with respect to the multiplication (\ref{Jprod}).
Let 
\[
\CN(\Pi)=\{u\in\bb{R}^{n}: Au=0\text{ for all }A\in\Pi\}.
\]     
Then, in the basis which is the union of the bases for $V=\CN(\Pi)^{\perp}$ and
$\CN(\Pi)$
\begin{equation}
  \label{red0}
\Pi=\left\{\mat{A}{0}{0}{0}:A\in\Pi_{0}\subset\Sym(V)\right\},  
\end{equation}
where $\Pi_{0}$ is a Jordan subalgebra of $\Sym(V)$, for which $\CN(\Pi_{0})=\{0\}$.
\begin{definition}
  We say that the Jordan algebra $\Pi$ is \textbf{non-singular} if $\CN(\Pi)=\{0\}$.
\end{definition}
Any algebra $\Pi$ is non-singular on $V=\CN(\Pi)^{\perp}$.  Hence, \WLOG, we
may assume that $\Pi$ is non-singular.
  \begin{lemma}
\label{lem:nonsing}
Suppose that the Jordan algebra $\Pi\subset\Sym(\bb{R}^{n})$ is non-singular.
Then, $I_{n}\in\Pi$, where $I_{n}$ is the $n\times n$ identity matrix.
  \end{lemma}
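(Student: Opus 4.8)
The plan is to use the element $\bb{I}_{\Pi}$ provided by Theorem~\ref{th:jvnw1}(a) and show that, for a non-singular representation in $\Sym(\bb{R}^{n})$, it must coincide with $I_{n}$. Concretely, $\bb{I}_{\Pi}\in\Pi\subset\Sym(\bb{R}^{n})$ is the Jordan unit: $\bb{I}_{\Pi}*A=A$ for all $A\in\Pi$, i.e. $\bb{I}_{\Pi}A+A\bb{I}_{\Pi}=2A$. The goal is to deduce $\bb{I}_{\Pi}=I_{n}$ from the hypothesis $\CN(\Pi)=\{0\}$.

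First I would diagonalize $\bb{I}_{\Pi}$: being symmetric, $\bb{I}_{\Pi}=\sum_{\lambda}\lambda P_{\lambda}$ with $P_{\lambda}$ the orthogonal projections onto its eigenspaces, and $\bb{R}^{n}=\bigoplus_{\lambda}V_{\lambda}$. Since $\bb{I}_{\Pi}$ is itself idempotent in the Jordan sense ($\bb{I}_{\Pi}*\bb{I}_{\Pi}=\bb{I}_{\Pi}$, which in $\Sym(\bb{R}^{n})$ means $\bb{I}_{\Pi}^{2}=\bb{I}_{\Pi}$ as an ordinary matrix product), its only eigenvalues are $0$ and $1$; write $P=P_{1}$ for the projection onto $V_{1}=\ker(\bb{I}_{\Pi}-I_{n})$, so $\bb{I}_{\Pi}=P$. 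Next, the relation $PA+AP=2A$ for all $A\in\Pi$ forces each $A\in\Pi$ to commute with $P$ and to vanish off the range of $P$: pairing with a vector $u\in V_{0}=\ker\bb{I}_{\Pi}$ gives $Au\in V_{0}$ and also $2Au=PAu+APu=PAu$ (since $Pu=0$), hence $Au=PAu\in V_{1}\cap V_{0}=\{0\}$. Thus every $A\in\Pi$ annihilates $V_{0}$, i.e. $V_{0}\subset\CN(\Pi)$. By non-singularity $V_{0}=\{0\}$, so $P=\bb{I}_{\Pi}=I_{n}\in\Pi$.

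I expect the only real care needed is the brief verification that the Jordan-algebra idempotence of $\bb{I}_{\Pi}$ translates to ordinary matrix idempotence in the special setting $\Pi\subset\Sym(\bb{R}^{n})$ (immediate since $X*X=X$ reads $X^{2}=X$), and the bookkeeping that $PA+AP=2A$ simultaneously yields $[P,A]=0$ and $A(I_{n}-P)=0$; both follow by sandwiching the identity between the spectral projections $P$ and $I_{n}-P$ of $\bb{I}_{\Pi}$. There is no substantive obstacle here — the statement is essentially the observation that a faithful (non-singular) special representation of a formally real Jordan algebra must carry the abstract unit to the matrix identity — but the cleanest route is the eigenspace argument above rather than manipulating the chain identities.
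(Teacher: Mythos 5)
Your proof is correct and follows essentially the same route as the paper's: both take the abstract Jordan unit $\bb{I}_{\Pi}$ from Theorem~\ref{th:jvnw1}(a), observe that $\bb{I}_{\Pi}^{2}=\bb{I}_{\Pi}$ forces it to be an orthogonal projection, and then apply $\bb{I}_{\Pi}A+A\bb{I}_{\Pi}=2A$ to a vector in $\ker\bb{I}_{\Pi}$ to conclude that this kernel lies in $\CN(\Pi)=\{0\}$ (the paper phrases this as ``$2$ is not an eigenvalue of $\bb{I}_{\Pi}$,'' you phrase it via the block decomposition, but it is the same computation). Only cosmetic nit: from $2Au=PAu$ you should apply $P$ once more to get $2PAu=PAu$, hence $PAu=0$ and $Au=0$, rather than writing $Au=PAu$; the sandwiching remark at the end already covers this.
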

  \begin{proof}
By Theorem~\ref{th:jvnw1} there exists the algebra identity
$\bb{I}_{\Pi}\in\Pi$. Then $\bb{I}_{\Pi}^{2}=\bb{I}_{\Pi}$. Therefore,
the symmetric matrix $\bb{I}_{\Pi}$ may have eigenvalues that are either 1 or 0. Suppose
$u\in\bb{R}^{n}$ is an eigenvector of $\bb{I}_{\Pi}$ with eigenvalue
zero. Then, by assumption, there exists $A\in\Pi$, such that
$v=Au\not=0$. Applying the equality $2A=A\bb{I}_{\Pi}+\bb{I}_{\Pi}A$ to the
vector $u$ we obtain $2v=\bb{I}_{\Pi}v$. This contradicts the fact that 2 is
not an eigenvalue for $\bb{I}_{\Pi}$. Thus, $\bb{I}_{\Pi}$ may not have an
eigenvalue 0. Therefore, $\bb{I}_{\Pi}=I_{n}$.
\end{proof}
Let $A\in\Pi$. Suppose that its eigenvalues are
$\{\Gl_{1},\ldots,\Gl_{s}\}\subset\bb{R}$ and the corresponding eigenspaces
are $V_{\Ga}$, $\Ga=1,\ldots,s$. By Theorem~\ref{th:jvnw1} the idempotents
$E_{\Ga}$ in (\ref{Adecomp}) must be orthogonal projections $P_{V_{\Ga}}$ onto
the eigenspaces $V_{\Ga}$ of $A$. We will call them spectral
projections. Thus, for any $A\in\Pi$ all of its spectral projections
$P_{V_{\Ga}}$ must also be in $\Pi$, and
\begin{equation}
  \label{spec}
  A=\sum_{\Ga=1}^{s}\Gl_{\Ga}P_{V_{\Ga}},\qquad V_{1}\oplus\ldots\oplus V_{s}=\bb{R}^{n},
\end{equation}

\subsection{Splitting of Jordan algebras}
\begin{definition}
  \label{def:split}
  We say that the Jordan algebra $\Pi\subset\Sym(\bb{R}^{n})$ \textbf{splits}
  over the orthogonal decomposition $\bb{R}^{n}=V_{1}\oplus\ldots\oplus
  V_{m}$, if $\Pi=\Pi_{1}\oplus\ldots\oplus\Pi_{m}$, is a direct sum of Jordan
  algebras, where
\[
\Pi_{\Ga}=\{A\in\Pi:Aw=0\text{ for all }w\in V_{\Ga}^{\perp}\},\quad\Ga=1,\ldots,m.
\]
\end{definition}
If $\Pi$ splits over $\bb{R}^{n}=V_{1}\oplus\ldots\oplus V_{m}$, then in the basis, which
is the union of bases for $V_{\Ga}$, $\Ga=1,\ldots,m$ the algebra $\Pi$ has
the form
\[
\Pi=\left\{\left[
  \begin{array}{ccc}
    A_{1}&&0\\
    &\ddots&\\
    0&&A_{m}
  \end{array}\right]:A_{\Ga}\in\Pi_{\Ga}^{0}\subset\Sym(V_{\Ga}),\ \Ga=1,\ldots,m\right\}.
\]
In other words all matrices in $\Pi$ have block-diagonal structure, with
independent blocks $A_{\Ga}$ taken from subalgebras
$\Pi_{\Ga}^{0}\subset\Sym(V_{\Ga})$ that are isomorphic to $\Pi_{\Ga}$.
Clearly, each $\Pi_{\Ga}\subset\Pi$ is an ideal. Conversely, if
$\CA\subset\Pi$ is an ideal, then, according to Theorem~\ref{th:jvnw1}, there
exists a complementary ideal
\[
\CB=\{B\in\Pi:A*B=0\text{ for all }A\in\CA\},
\]
such that $\Pi=\CA\oplus\CB$, as a direct sum of algebras.
\begin{lemma}
  \label{lem:ideal}
Let $\CA$ and $\CB$ be the complementary pair of ideals in $\Pi$. Then
there exists $V\subset\bb{R}^{n}$ such that $\Pi=\CA\oplus\CB$ is the
splitting of $\Pi$ over $\bb{R}^{n}=V\oplus V^{\perp}$.
\end{lemma}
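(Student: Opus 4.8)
The plan is to identify the subspace $V$ explicitly as the "common range" of the ideal $\CA$ and then verify that $\Pi=\CA\oplus\CB$ is precisely the splitting over $\bb{R}^{n}=V\oplus V^{\perp}$ in the sense of Definition~5.9. First I would set $V=\Span\{Au:A\in\CA,\ u\in\bb{R}^{n}\}$, the sum of the ranges of all elements of $\CA$, and let $P=P_{V}$ be the orthogonal projection onto $V$. Since by Theorem~3.1(a) the algebra $\CA$ (being itself a formally real Jordan algebra, as an ideal in $\Pi$) has its own identity element $\bb{I}_{\CA}$, which is an idempotent in $\Sym(\bb{R}^{n})$, I would argue that $\bb{I}_{\CA}$ is the orthogonal projection onto $V$: indeed its range is contained in $V$ because $\bb{I}_{\CA}\in\CA$, and it acts as the identity on the range of every $A\in\CA$ since $\bb{I}_{\CA}*A=A$ forces $\bb{I}_{\CA}Au+A\bb{I}_{\CA}u=2Au$, and a short argument with eigenvalues (as in the proof of Lemma~5.6) shows $\bb{I}_{\CA}$ fixes $Au$. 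Hence $P=\bb{I}_{\CA}\in\CA\subset\Pi$.

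Next I would show that every $A\in\CA$ annihilates $V^{\perp}$ and every $B\in\CB$ annihilates $V$, which is exactly what is needed to match Definition~5.9. For $A\in\CA$: since $A=A*\bb{I}_{\CA}=\hf(AP+PA)$ and $A$ is symmetric while $P$ projects onto $V$, a direct computation (or again the eigenvalue argument) gives $A=PAP$, so $A$ kills $V^{\perp}$ and maps $\bb{R}^{n}$ into $V$; thus $\CA\subset\{A\in\Pi:Aw=0\text{ for all }w\in V^{\perp}\}$. For $B\in\CB$: by definition of the complementary ideal, $A*B=0$ for all $A\in\CA$, in particular $P*B=\hf(PB+BP)=0$; since $P$ and $B$ are both symmetric, $PB+BP=0$ together with $P$ being an orthogonal projection forces $PB=BP=0$ (multiply on left and right by $P$), so $B$ maps $V$ to $0$ and maps $\bb{R}^{n}$ into $V^{\perp}$; thus $\CB\subset\{B\in\Pi:Bw=0\text{ for all }w\in V\}$.

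Finally I would assemble the splitting. The two inclusions above, combined with $\Pi=\CA\oplus\CB$ as algebras (Theorem~3.1(d)), give $\Pi_{1}\supseteq\CA$ and $\Pi_{2}\supseteq\CB$ where $\Pi_{1},\Pi_{2}$ are the algebras attached to $V,V^{\perp}$ in Definition~5.9; since $\Pi_{1}\cap\Pi_{2}=\{0\}$ (an element killing both $V$ and $V^{\perp}$ is zero) and $\Pi_{1}\oplus\Pi_{2}\subseteq\Pi=\CA\oplus\CB\subseteq\Pi_{1}\oplus\Pi_{2}$, we get $\Pi_{1}=\CA$, $\Pi_{2}=\CB$, which is the assertion. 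The main obstacle I anticipate is the clean handling of the step "$P*B=0$ and both symmetric imply $PB=BP=0$": the naive conclusion is false for general matrices, but it works here because $P$ is an \emph{orthogonal projection} (idempotent and symmetric), so one should present the argument carefully — write $0=P(PB+BP)P=2PBP$, hence $PBP=0$; then $0=P(PB+BP)=PB+PBP=PB$, and symmetrically $BP=0$. An analogous small lemma underlies the identification of $\bb{I}_{\CA}$ with $P_{V}$, and it is essentially the same computation already used in Lemma~5.6, so I would either invoke that or reproduce the one-line eigenvalue argument.
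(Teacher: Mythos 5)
Your proof is correct, and it enters the problem from the opposite side compared with the paper. The paper takes $V=\CN(\CB)$, shows that the restriction of $\CB$ to $V^{\perp}$ is non-singular so that Lemma~\ref{lem:nonsing} yields $P_{V^{\perp}}\in\CB$, and only afterwards deduces $P_{V}\in\CA$ and the two characterizations $\CA=\{A:A|_{V^{\perp}}=0\}$, $\CB=\{B:B|_{V}=0\}$. You instead take $V$ to be the joint range of $\CA$ and produce the key projector directly as the identity element $\bb{I}_{\CA}$ of the ideal $\CA$ (which is itself a formally real Jordan algebra, so Theorem~\ref{th:jvnw1}(a) applies); the relation $P A+AP=2A$ with $P=\bb{I}_{\CA}$ an orthogonal projection then forces $A=PAP$, and $P*B=0$ forces $PB=BP=0$ exactly as you compute. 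The two routes rest on the same underlying eigenvalue/block computation (your identification of $\bb{I}_{\CA}$ with $P_{V}$ is essentially Lemma~\ref{lem:nonsing} applied inside the ideal), but yours saves the intermediate step of restricting $\CB$ to $V^{\perp}$ and verifying non-singularity there, at the price of invoking the existence of the ideal's own identity. Your final assembly ($\CA\subseteq\Pi_{1}$, $\CB\subseteq\Pi_{2}$, $\Pi_{1}\cap\Pi_{2}=\{0\}$, hence equality) is clean and matches Definition~\ref{def:split}. The only cosmetic omission is the degenerate case $\CA=\{0\}$ or $\CB=\{0\}$, which the paper dispatches explicitly at the outset and which you should note is trivial before invoking $\bb{I}_{\CA}$.
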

\begin{proof}
  Let $V=\CN(\CB)$. The subspace $V$ cannot be all of $\bb{R}^{n}$, since
  otherwise $\CB=\{0\}$ and $\CA=\Pi$. If $V=\{0\}$, then $\CB$ is a
  non-singular algebra and $I_{n}\in\CB$, by Lemma~\ref{lem:nonsing}. But
  then $\CB=\Pi$ and $\CA=\{0\}$. The subspace $V^{\perp}$ is invariant for
  $\CB$, since all matrices in $\CB$ are symmetric. Thus, $\CB$ is isomorphic to
a subalgebra $\CB_{0}$ of $\Sym(V^{\perp})$ defined by the restriction of $\CB$ on
$V^{\perp}$. By our construction $\CN(\CB_{0})=\{0\}$ and, applying
Lemma~\ref{lem:nonsing} to $\CB_{0}$ we conclude that
$P_{V^{\perp}}\in\CB$. Now, let $B\in\Pi$ be such that $Bv=0$ for all $v\in
V$. Then $P_{V^{\perp}}*B=B$. Thus, $B\in\CB$, since $\CB$ is an ideal. We
have now proved that $\CB=\{B\in\Pi:Bv=0\text{ for all }v\in\CN(\CB)\}$. To finish
the proof of the lemma we need to show that $\CN(\CA)=V^{\perp}$. First we
observe that $V$ (and therefore $V^{\perp}$) is an invariant subspace for
$\CA$. Indeed, for any $v\in V$ and any $A\in\CA$ we have 
\[
P_{V^{\perp}}(Av)=2(P_{V^{\perp}}*A)v-A(P_{V^{\perp}}v)=0,
\]
since $P_{V^{\perp}}\in\CB$. Also, for any $w\in V^{\perp}$ and any $A\in\CA$ we have
\[
Aw=P_{V^{\perp}}(Aw)=2(P_{V^{\perp}}*A)w-A(P_{V^{\perp}}w)=-Aw.
\]
Thus, $V^{\perp}\subset\CN(\CA)$. To prove the reverse inclusion we
observe that $P_{V}*B=0$ for all $B\in\CB$. Indeed, $B*P_{V^{\perp}}=B$ for
all $B\in\CB$, therefore $B*P_{V}=B*(I_{n}-P_{V^{\perp}})=B-B=0$. It follows
that $P_{V}\in\CA$. Thus, if $Ax=0$ for all $A\in\CA$, then $P_{V}x=0$ and
$x\in V^{\perp}$. 
\end{proof}
\begin{corollary}
\label{cor:JAstruct}
  Let (\ref{sdecomp}) be the decomposition of $\Pi$ into a direct sum of
  simple algebras. Then, there exist pairwise orthogonal subspaces
  $V_{1},\ldots,V_{m}$ of $\bb{R}^{n}$ such that (\ref{sdecomp}) is the
  splitting of $\Pi$ over the orthogonal decomposition
  $\bb{R}^{n}=V_{1}\oplus\ldots\oplus V_{m}$. Thus,
  \begin{equation}
    \label{JAstruct}
\Pi=\left\{\left[
  \begin{array}{ccc}
    A_{1}&&0\\
    &\ddots&\\
    0&&A_{m}
  \end{array}\right]:A_{\Ga}\in\Pi_{\Ga}\subset\Sym(V_{\Ga}),\ \Ga=1,\ldots,m\right\},
  \end{equation}
where $\Pi_{\Ga}\subset\Sym(V_{\Ga})$, $\Ga=1,\ldots,m$ are simple Jordan algebras.
\end{corollary}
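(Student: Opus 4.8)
The plan is to bootstrap the abstract direct-sum decomposition \eqref{sdecomp} from Theorem~\ref{th:jvnw1}(e) into the concrete block-diagonal form \eqref{JAstruct} by iterating Lemma~\ref{lem:ideal}. First I would recall that by Theorem~\ref{th:jvnw1}(e) we may write $\Pi=\Pi_{1}\oplus\ldots\oplus\Pi_{m}$ as an orthogonal direct sum of algebras, where each $\Pi_{j}$ is a minimal proper ideal. I would then argue by induction on $m$. The base case $m=1$ is vacuous: $\Pi$ is simple, and since (after passing to $\CN(\Pi)^{\perp}$ via \eqref{red0}, which we may assume has already been done) $\Pi$ is non-singular, there is nothing to split. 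For the inductive step, set $\CA=\Pi_{1}$ and $\CB=\Pi_{2}\oplus\ldots\oplus\Pi_{m}$; these form a complementary pair of ideals in $\Pi$ in the sense required by Lemma~\ref{lem:ideal} — indeed $\CB=\{B\in\Pi:A*B=0\text{ for all }A\in\CA\}$ follows from the orthogonality of the summands with respect to the Jordan product. Lemma~\ref{lem:ideal} then produces a subspace $V_{1}\subset\bb{R}^{n}$ such that $\Pi=\CA\oplus\CB$ is the splitting of $\Pi$ over $\bb{R}^{n}=V_{1}\oplus V_{1}^{\perp}$, so that in the corresponding block basis every element of $\Pi$ is block-diagonal with a $\Pi_{1}$-block acting on $V_{1}$ and a $\CB$-block acting on $V_{1}^{\perp}$, the two blocks chosen independently.

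Next I would restrict $\CB$ to $V_{1}^{\perp}$: by Definition~\ref{def:split} the restriction gives a Jordan subalgebra $\CB_{0}\subset\Sym(V_{1}^{\perp})$ isomorphic to $\CB$, and by construction $\CN(\CB_{0})=\{0\}$, so $\CB_{0}$ is non-singular and $I_{V_{1}^{\perp}}\in\CB_{0}$ by Lemma~\ref{lem:nonsing}. Moreover $\CB_{0}=\Pi_{2}'\oplus\ldots\oplus\Pi_{m}'$ is still a direct sum of $m-1$ minimal proper ideals, each isomorphic to the corresponding $\Pi_{j}$. Applying the inductive hypothesis to $\CB_{0}\subset\Sym(V_{1}^{\perp})$ yields pairwise orthogonal subspaces $V_{2},\ldots,V_{m}$ of $V_{1}^{\perp}$ over which $\CB_{0}$ splits, with simple blocks. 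Reassembling: $V_{1},\ldots,V_{m}$ are pairwise orthogonal subspaces of $\bb{R}^{n}$ with $\bb{R}^{n}=V_{1}\oplus\ldots\oplus V_{m}$, and in the union of the block bases every matrix in $\Pi$ has the block-diagonal shape \eqref{JAstruct} with $A_{\Ga}\in\Pi_{\Ga}\subset\Sym(V_{\Ga})$ chosen independently; the simplicity of each $\Pi_{\Ga}$ is inherited from \eqref{sdecomp}.

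I expect the main subtlety — rather than a genuine obstacle — to be bookkeeping: one must check that the identification of $\CB$ with $\CB_{0}$ is compatible with the ideal decomposition, i.e. that the simple summands $\Pi_{j}$ for $j\ge 2$ go over to the simple summands of $\CB_{0}$ under restriction to $V_{1}^{\perp}$, so that the inductive hypothesis applies to a decomposition of the same type. This is immediate from the fact that restriction to an invariant subspace on which the algebra acts faithfully is a Jordan-algebra isomorphism, carrying ideals to ideals and minimal ideals to minimal ideals. A second point worth stating explicitly is that the blocks in \eqref{JAstruct} may be varied independently: this is exactly the content of the direct-sum (rather than merely subdirect-sum) structure furnished at each stage by Lemma~\ref{lem:ideal}, since $P_{V_{\Ga}}\in\Pi$ for every $\Ga$, so $\Pi$ contains each summand $\Pi_{\Ga}$ outright. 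With these observations the induction closes and \eqref{JAstruct} follows.
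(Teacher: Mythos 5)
Your proposal is correct and matches the paper's intent: the corollary is stated there without proof, as an immediate iteration of Lemma~\ref{lem:ideal} applied to the decomposition of Theorem~\ref{th:jvnw1}(e), which is exactly the induction you carry out. Your added checks (that $\CB_{0}$ is non-singular so the inductive hypothesis applies, and that restriction to $V_{1}^{\perp}$ carries minimal ideals to minimal ideals) are the right details to make the iteration rigorous.
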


\subsection{Irreducible Jordan algebras}
According to Corollary~\ref{cor:JAstruct} we need to understand the structure
of simple non-singular Jordan algebras $\Pi\subset\Sym(\bb{R}^{n})$.
As in the theory of associative algebras it is important whether or not there
is a common invariant subspace for all matrices $A\in\Pi$. Any Jordan algebra
is completely reducible in the sense that there exists an orthogonal
decomposition 
\[
\bb{R}^{n}=V_{1}\oplus\ldots\oplus V_{k},
\] 
such that all subspaces $V_{\Ga}$ are invariant for all matrices $A\in\Pi$ and
they do not contain any smaller proper invariant subspaces for $\Pi$.
\begin{definition}
  \label{def:ssja}
A Jordan subalgebra of $\Sym(\bb{R}^{n})$ is called \textbf{irreducible} if it does
not have any proper invariant subspaces in $\bb{R}^{n}$.
\end{definition}
We remark that an irreducible Jordan subalgebra of $\Sym(\bb{R}^{n})$ is just
a faithful irreducible representation of a simple formally real Jordan algebra.
Our goal is to describe the structure of an arbitrary simple non-singular subalgebra
of $\Sym(\bb{R}^{n})$ in terms of irreducible algebras.
\begin{lemma}
  \label{lem:simptoir}
Let $\Pi$ be a simple non-singular subalgebra of $\Sym(\bb{R}^{n})$ and
$\bb{R}^{n}=V_{1}\oplus\ldots\oplus V_{k}$ be an orthogonal
decomposition of $\bb{R}^{n}$ into the sum of irreducible invariant
subspaces. Let
\[
\Pi_{\Ga}=\{P_{V_{\Ga}}AP_{V_{\Ga}}:A\in\Pi\}\subset\Sym(V_{\Ga}),\quad \Ga=1,\ldots,k.
\]
Then $\Pi_{\Ga}\subset\Sym(V_{\Ga})$, $\Ga=1,\ldots,k$ are irreducible Jordan
algebras in the sense of Definition~\ref{def:ssja}. Moreover, the maps
$T_{\Ga\Gb}:\Pi_{\Ga}\to\Pi_{\Gb}$ defined as
\[
T_{\Ga\Gb}A=P_{V_{\Gb}}KP_{V_{\Gb}},\qquad A=P_{V_{\Ga}}KP_{V_{\Ga}},\quad K\in\Pi
\]
are Jordan algebra isomorphisms.
\end{lemma}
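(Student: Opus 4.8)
The plan is to reduce everything to a single observation: since each $V_{\Ga}$ is invariant under the symmetric operators in $\Pi$, its orthogonal complement $V_{\Ga}^{\perp}$ is invariant as well, and hence every $A\in\Pi$ commutes with the orthogonal projection $P_{V_{\Ga}}$. Consequently $P_{V_{\Ga}}AP_{V_{\Ga}}=AP_{V_{\Ga}}=P_{V_{\Ga}}A$, so $\Pi_{\Ga}$ is nothing but the algebra of restrictions $\{A|_{V_{\Ga}}:A\in\Pi\}\subset\Sym(V_{\Ga})$. A direct computation using $P_{V_{\Ga}}^{2}=P_{V_{\Ga}}$ together with this commutation shows that the restriction map $r_{\Ga}\colon\Pi\to\Pi_{\Ga}$, $r_{\Ga}(A)=A|_{V_{\Ga}}$, is a surjective homomorphism of Jordan algebras; in particular this also verifies that $\Pi_{\Ga}$ is a Jordan subalgebra of $\Sym(V_{\Ga})$.

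Next I would invoke simplicity of $\Pi$. The kernel $\CK=\{A\in\Pi:A|_{V_{\Ga}}=0\}$ of $r_{\Ga}$ is an ideal of $\Pi$: if $A|_{V_{\Ga}}=0$ then $(A*B)|_{V_{\Ga}}=A|_{V_{\Ga}}*B|_{V_{\Ga}}=0$ for every $B\in\Pi$. It cannot be all of $\Pi$, for otherwise every $A\in\Pi$ would annihilate the nonzero subspace $V_{\Ga}$, forcing $V_{\Ga}\subset\CN(\Pi)=\{0\}$, contrary to non-singularity. Since $\Pi$ is simple, $\CK=\{0\}$, and therefore $r_{\Ga}$ is a Jordan algebra isomorphism of $\Pi$ onto $\Pi_{\Ga}$. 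In particular $A=P_{V_{\Ga}}KP_{V_{\Ga}}$ determines $K\in\Pi$ uniquely (it determines $r_{\Ga}(K)$, and $r_{\Ga}$ is injective), so $T_{\Ga\Gb}$ is well defined; and $T_{\Ga\Gb}=r_{\Gb}\circ r_{\Ga}^{-1}$ is a composition of Jordan algebra isomorphisms, hence itself a Jordan algebra isomorphism $\Pi_{\Ga}\to\Pi_{\Gb}$.

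Finally, irreducibility of $\Pi_{\Ga}$ is immediate from the way the decomposition was chosen: a subspace $W\subseteq V_{\Ga}$ is invariant under $\Pi_{\Ga}$ if and only if it is invariant under $\Pi$, since $\Pi$ acts on $V_{\Ga}$ precisely through its restrictions; and $V_{\Ga}$ was chosen to contain no proper nonzero $\Pi$-invariant subspace. Hence $\Pi_{\Ga}$ has no proper invariant subspace in $V_{\Ga}$, i.e. it is irreducible in the sense of Definition~\ref{def:ssja}.

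I do not expect a genuine obstacle here; the proof is essentially bookkeeping. The only points requiring care are the verification that each $P_{V_{\Ga}}$ commutes with $\Pi$ (which uses both invariance and symmetry) and hence that $r_{\Ga}$ is genuinely multiplicative, and the routine remark that $V_{\Ga}\neq\{0\}$ is exactly what makes the kernel ideal proper, so that simplicity can be brought to bear.
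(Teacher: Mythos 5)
Your proof is correct and follows essentially the same route as the paper: the key step in both is that the set of $K\in\Pi$ whose $V_{\Ga}$-block vanishes is an ideal, proper by non-singularity and hence zero by simplicity, which gives both the well-definedness of $T_{\Ga\Gb}$ and its injectivity. Your packaging via the restriction homomorphism $r_{\Ga}$ (justified by the observation that each $A\in\Pi$ commutes with $P_{V_{\Ga}}$) is a slightly cleaner way of organizing the same computation the paper does directly with $P_{V_{\Ga}}KP_{V_{\Ga}}$.
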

\begin{proof}
Let $A\in\Pi_{\Ga}$ and suppose $\{K_{1},K_{2}\}\subset\Pi$ are such that
$A=P_{V_{\Ga}}K_{1}P_{V_{\Ga}}=P_{V_{\Ga}}K_{2}P_{V_{\Ga}}$. Then
$P_{V_{\Ga}}(K_{1}-K_{2})P_{V_{\Ga}}=0$. Let
\[
\CA=\{K\in\Pi:P_{V_{\Ga}}KP_{V_{\Ga}}=0\}.
\]
Let us show that $\CA$ is an ideal in $\Pi$. Let $X\in\Pi$ and $K\in\CA$ be
arbitrary. For any $x\in\bb{R}^{n}$ we have $x=v+w$, where $v\in V_{\Ga}$ and
$w\in V_{\Ga}^{\perp}$. Obviously $P_{V_{\Ga}}(X*K)P_{V_{\Ga}}w=0$. We also
have
\[
2P_{V_{\Ga}}(X*K)P_{V_{\Ga}}v=P_{V_{\Ga}}KXv,
\]
since $Kv=0$ for any $v\in V_{\Ga}$. But $Xv\in V_{\Ga}$, since $V_{\Ga}$ is
an invariant subspace for $\Pi$, and thus, $KXv=0$. It follows that
$X*K\in\CA$. The ideal $\CA$ cannot be $\Pi$, since in that case
$V_{\Ga}\subset\CN(\Pi)$, contrary to the assumption that $\Pi$ is a
non-singular algebra. Hence, $\CA=\{0\}$. Hence, for any $A\in\Pi_{\Ga}$ there
is a unique $K\in\Pi$ for which $A=P_{V_{\Ga}}KP_{V_{\Ga}}$. Moreover,
for any $\{K_{1},K_{2}\}\subset\Pi$ we have
\[
(P_{V_{\Ga}}K_{1}P_{V_{\Ga}})*(P_{V_{\Ga}}K_{2}P_{V_{\Ga}})=P_{V_{\Ga}}(K_{1}*K_{2})P_{V_{\Ga}}.
\]
Indeed, the invariance of $V_{\Ga}$ implies that for any $v\in V_{\Ga}$
\[
P_{V_{\Ga}}K_{1}P_{V_{\Ga}}P_{V_{\Ga}}K_{2}v=P_{V_{\Ga}}K_{1}K_{2}v=K_{1}K_{2}v.
\]
It follows that $\Pi_{\Ga}\subset\Sym(V_{\Ga})$ are irreducible Jordan algebras. 
The maps $T_{\Ga\Gb}$ are well-defines and preserve Jordan multiplication
\[
T_{\Ga\Gb}(A_{1}*A_{2})=P_{V_{\Gb}}(K_{1}*K_{2})P_{V_{\Gb}}=(P_{V_{\Gb}}K_{1}P_{V_{\Gb}})*(P_{V_{\Gb}}K_{2}P_{V_{\Gb}})=(T_{\Ga\Gb}A_{1})*(T_{\Ga\Gb}A_{2}).
\]
By construction, the maps $T_{\Ga\Gb}$ are surjective. Let us show that they
are also injective. If $A\in\Pi_{\Ga}$ and $A\not=0$, then there is a unique
$K\in\Pi$ such that $A=P_{V_{\Ga}}KP_{V_{\Ga}}$. Clearly, $K\not=0$. If
$T_{\Ga\Gb}A=P_{V_{\Gb}}KP_{V_{\Gb}}=0$, then $K'=0$ and $K\not=K'$ satisfy
$P_{V_{\Gb}}KP_{V_{\Gb}}=P_{V_{\Gb}}K'P_{V_{\Gb}}$ in contradiction of
uniqueness. 
\end{proof}
Hence, we have proved the structure theorem for simple non-singular Jordan
subalgebras of $\Sym(\bb{R}^{n})$. 
\begin{theorem}
  \label{th:str}
  Let $\Pi\subset\Sym(\bb{R}^{n})$ be a non-singular simple Jordan
  algebra. Then there exists an orthonormal basis (o.n.b.) of $\bb{R}^{n}$ in
  which the algebra $\Pi$ has the form
\begin{equation}
  \label{simpleJA}
\Pi=\left\{\left[
  \begin{array}{cccc}
    A&&&0\\
    &T_{1}A&&\\
    &&\ddots&\\
    0&&&T_{k-1}A
  \end{array}\right]:A\in\Pi_{0}\subset\Sym(\bb{R}^{N})\right\},  
\end{equation}
where $\Pi_{0}\subset\Sym(\bb{R}^{N})$ is an irreducible Jordan algebra and
$T_{1},\ldots,T_{k-1}$ are Jordan algebra isomorphisms. 
\end{theorem}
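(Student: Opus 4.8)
The plan is to obtain Theorem~\ref{th:str} as essentially a repackaging of Lemma~\ref{lem:simptoir} together with complete reducibility. First I would invoke the complete reducibility of $\Pi$ recorded in the text preceding Definition~\ref{def:ssja}; it rests only on the fact that the orthogonal complement of a $\Pi$-invariant subspace is again $\Pi$-invariant, since every $A\in\Pi$ is symmetric. This fixes an orthogonal decomposition $\bb{R}^{n}=V_{1}\oplus\ldots\oplus V_{k}$ into irreducible $\Pi$-invariant subspaces. I then fix an orthonormal basis of $\bb{R}^{n}$ obtained by concatenating orthonormal bases of $V_{1},\ldots,V_{k}$. Because each $V_{\alpha}$ is invariant for $\Pi$ and each $A\in\Pi$ is symmetric, every $A\in\Pi$ is block diagonal in this basis, $A=\mathrm{diag}(A_{1},\ldots,A_{k})$ with $A_{\alpha}=P_{V_{\alpha}}AP_{V_{\alpha}}$ regarded as an element of $\Sym(V_{\alpha})$.

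I would then apply Lemma~\ref{lem:simptoir} to this decomposition. That lemma already tells us that $\Pi_{\alpha}=\{P_{V_{\alpha}}AP_{V_{\alpha}}:A\in\Pi\}\subset\Sym(V_{\alpha})$ are irreducible Jordan algebras in the sense of Definition~\ref{def:ssja}, and that the maps $T_{\alpha\beta}\colon\Pi_{\alpha}\to\Pi_{\beta}$ are Jordan-algebra isomorphisms. The point I would isolate from its proof is that, for each fixed $\alpha$, the restriction map $\iota_{\alpha}\colon\Pi\to\Pi_{\alpha}$, $K\mapsto P_{V_{\alpha}}KP_{V_{\alpha}}$, is a \emph{bijection}: surjectivity is built into the definition of $\Pi_{\alpha}$, and injectivity is exactly the ``unique $K$'' conclusion, which uses that $\Pi$ is simple and non-singular so that the ideal $\{K\in\Pi:P_{V_{\alpha}}KP_{V_{\alpha}}=0\}$ collapses to $\{0\}$. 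Combined with the multiplicativity identity proved in the lemma, $\iota_{\alpha}$ is a Jordan-algebra isomorphism, and $T_{\alpha\beta}=\iota_{\beta}\circ\iota_{\alpha}^{-1}$.

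It then remains only to assemble the normal form. Set $N=\dim V_{1}$, identify $\Sym(V_{1})$ with $\Sym(\bb{R}^{N})$ through the chosen basis, put $\Pi_{0}=\Pi_{1}$, and for $j=1,\ldots,k-1$ set $T_{j}=T_{1,\,j+1}\colon\Pi_{0}\to\Pi_{j+1}$. Given $A\in\Pi$, its first block $A_{1}=\iota_{1}(A)$ ranges exactly over $\Pi_{0}$ (by surjectivity of $\iota_{1}$), it determines $A$ (by injectivity of $\iota_{1}$), and its remaining blocks are $A_{j+1}=\iota_{j+1}\bigl(\iota_{1}^{-1}(A_{1})\bigr)=T_{j}A_{1}$. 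Hence in the chosen orthonormal basis $\Pi$ is precisely the set of block-diagonal matrices $\mathrm{diag}(A,T_{1}A,\ldots,T_{k-1}A)$ with $A\in\Pi_{0}$, which is (\ref{simpleJA}).

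I do not expect a genuine obstacle here, since Lemma~\ref{lem:simptoir} has already absorbed the substantive work and the theorem is close to a corollary. The two places that deserve a line of care are (i) the existence of the decomposition into irreducibles, which I would justify via the symmetry argument above, and (ii) the upgrade from ``each block of $A$ equals $T_{\alpha\beta}$ applied to $A_{\alpha}$'' — a statement about images — to the assertion that a single matrix $A\in\Pi_{0}$ parametrizes all of $\Pi$; this requires the \emph{injectivity} of $\iota_{1}$, which is exactly where simplicity of $\Pi$ is used.
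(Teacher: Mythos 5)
Your proposal is correct and follows the paper's route exactly: the paper treats Theorem~\ref{th:str} as an immediate consequence of Lemma~\ref{lem:simptoir} together with complete reducibility (``Hence, we have proved the structure theorem\ldots''), and your assembly of the block-diagonal form via the bijective restriction maps $\iota_{\alpha}$ and $T_{j}=T_{1,\,j+1}$ is precisely the intended bookkeeping. No gaps.
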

Thus, we have reduced the problem of description of all subalgebras of
$\Sym(\bb{R}^{n})$ to the problem of characterization of all irreducible
representations of simple Jordan algebras and their isomorphisms.

The problem of \completeness\ of $\Pi$ can also be restated.  It is now clear
that for a Jordan algebra to be \complete\  in the sense of
Definition~\ref{def:comp} it is necessary and sufficient for each of its
simple components $\Pi_{\Ga}$, to be \complete. The latter condition will be
satisfied \IFF the irreducible algebras $\Pi_{0}$ and
$\Pi_{\Ga}=T_{\Ga}\Pi_{0}$, $\Ga=1,\ldots,k-1$, in the representation
(\ref{simpleJA}) are \complete\  and Jordan isomorphisms $T_{\Ga}$
satisfy
\begin{equation}
  \label{T4chain}
  T_{\Ga}(A_{1}A_{2}A_{3}A_{4}+A_{4}A_{3}A_{2}A_{1})=
T_{\Ga}A_{1}T_{\Ga}A_{2}T_{\Ga}A_{3}T_{\Ga}A_{4}+T_{\Ga}A_{4}T_{\Ga}A_{3}T_{\Ga}A_{2}T_{\Ga}A_{1}.
\end{equation}

\section{The characterization of irreducible Jordan algebras}
\setcounter{equation}{0} 
\label{sec:iralg}
To complete the characterization of all Jordan subalgebras $\Sym(\bb{R}^{n})$ we
need to describe all irreducible Jordan algebras up to an orthogonal
equivalence and compute their Jordan isomorphisms. In this section we
take on the former problem.

\subsection{Block decomposition}
Let $\mathfrak{J}$ be a simple formally real Jordan algebra. Let
$\Phi:\mathfrak{J}\to\Sym(\bb{R}^{n})$ be an irreducible representation of
$\mathfrak{J}$. 
According to Theorem~\ref{th:jvnw2} there exist $r$ indecomposable orthogonal
idempotents $\{E_{1},\ldots,E_{r}\}\subset\mathfrak{J}$. Their images under
the representation $\Phi$ must necessarily be orthogonal projectors
onto the mutually orthogonal subspaces $W_{1},\ldots,W_{\Ga}$ such that
$\bb{R}^{n}=W_{1}\oplus\ldots\oplus W_{r}$. The algebra $\Pi=\Phi(\mathfrak{J})$ can be
represented as a direct sum $\Pi=\bigoplus_{1\le\rho\le\Gs\le r}\CM^{\rho\Gs}$
(block decomposition), where
\[
\CM^{\Ga\Ga}=\bb{R}P_{W_{\Ga}},\quad P_{W_{\Ga}}=\Phi(E_{\Ga}),\quad\Ga=1,\ldots,r, 
\]
and
\[
\CM^{\Ga\Gb}=\{P_{W_{\Ga}}AP_{W_{\Gb}}+P_{W_{\Gb}}AP_{W_{\Ga}}:A\in\Pi\},\quad
1\le\Ga<\Gb\le r.
\]
The spaces $\CM^{\Ga\Gb}$ have the same dimension $p$ for all $1\le\Ga<\Gb\le r$
and have the basis 
\begin{equation}
  \label{Xabm}
  X_{\mu}^{\Ga\Gb}=P_{W_{\Ga}}A_{\mu}P_{W_{\Gb}}+P_{W_{\Gb}}A_{\mu}P_{W_{\Ga}}
\end{equation}
such that
\begin{equation}
  \label{spinmult}
  X_{\mu}^{\Ga\Gb}*X_{\nu}^{\Ga\Gb}=\Gd_{\mu\nu}(P_{W_{\Ga}}+P_{W_{\Gb}}),\quad
\mu,\nu=1,\ldots,p.
\end{equation}
In particular,
\[
(X_{\mu}^{\Ga\Gb})^{2}=P_{W_{\Ga}}A_{\mu}P_{W_{\Gb}}A_{\mu}P_{W_{\Ga}}+
P_{W_{\Gb}}A_{\mu}P_{W_{\Ga}}A_{\mu}P_{W_{\Gb}}=P_{W_{\Ga}}+P_{W_{\Gb}}.
\]
It follows that 
\[
P_{W_{\Ga}}A_{\mu}P_{W_{\Gb}}A_{\mu}P_{W_{\Ga}}=P_{W_{\Ga}},\qquad
P_{W_{\Gb}}A_{\mu}P_{W_{\Ga}}A_{\mu}P_{W_{\Gb}}=P_{W_{\Gb}}.
\]
From the first equality we obtain that
\[
\dim(W_{\Ga})=\rank(P_{W_{\Ga}})=\rank(P_{W_{\Ga}}A_{\mu}P_{W_{\Gb}}A_{\mu}P_{W_{\Ga}})\le\dim(W_{\Gb}),
\]
while from the second one we obtain
\[
\dim(W_{\Gb})=\rank(P_{W_{\Gb}})=\rank(P_{W_{\Gb}}A_{\mu}P_{W_{\Ga}}A_{\mu}P_{W_{\Gb}})\le\dim(W_{\Ga}).
\]
Thus, $\dim(W_{\Ga})=d=n/r$ for all $\Ga=1,\ldots,r$.

\subsection{Structure of $\CM^{\Ga\Gb}$-blocks}
In order to continue, it will be convenient to fix arbitrary orthonormal bases for
spaces $W_{\Ga}$ and work with the space $\CU^{\Ga\Gb}$ of $d\times d$ matrices representing the
$\Ga\Gb$-block of $\CM^{\Ga\Gb}$. The $n\times n$ symmetric matrices
$X_{\mu}^{\Ga\Gb}\in\CM^{\Ga\Gb}$ given by (\ref{Xabm}) can be described
by their $\Ga\Gb$ $d\times d$ blocks
$\Hat{X}_{\mu}^{\Ga\Gb}\in\CU^{\Ga\Gb}$. When $\Ga\not=\Gb$ the relation
(\ref{spinmult}) for $X_{\mu}^{\Ga\Gb}$ can be written in terms of matrices
$\Hat{X}_{1}^{\Ga\Gb},\ldots,\Hat{X}_{p}^{\Ga\Gb}$ as follows
\[
\Hat{X}_{\mu}^{\Ga\Gb}(\Hat{X}_{\mu}^{\Ga\Gb})^{T}=I_{d},\qquad \Hat{X}_{\mu}^{\Ga\Gb}(\Hat{X}_{\nu}^{\Ga\Gb})^{T}+\Hat{X}_{\nu}^{\Ga\Gb}(\Hat{X}_{\mu}^{\Ga\Gb})^{T}=0,\quad\mu\not=\nu.
\]
It follows that $\{\Hat{X}_{1}^{\Ga\Gb},\ldots,\Hat{X}_{p}^{\Ga\Gb}\}\subset O(d)$.
Let $Y_{\mu}^{\Ga\Gb}=\Hat{X}_{\mu}^{\Ga\Gb}(\Hat{X}_{p}^{\Ga\Gb})^{T}$, $\mu=1,\ldots,p$. Then $Y_{p}^{\Ga\Gb}=I_{d}$ and
\begin{equation}
  \label{RH}
(Y_{\mu}^{\Ga\Gb})^{2}=-I_{d},\quad Y_{\mu}^{\Ga\Gb}+(Y_{\mu}^{\Ga\Gb})^{T}=0,\quad
Y_{\mu}^{\Ga\Gb}Y_{\nu}^{\Ga\Gb}+Y_{\nu}^{\Ga\Gb}Y_{\mu}^{\Ga\Gb}=0,\quad 1\le\mu<\nu\le p-1.  
\end{equation}
We consider two cases $r=2$ and $r>2$.

\subsubsection{Case $r>2$}
\label{sss:rg2}
When $r>2$ Remark~\ref{rem:jvnw} implies that $\mathfrak{J}$ can only be
$\Sym(\bb{R}^{r})$, for which $p=1$, $\mathfrak{H}(\bb{C}^{r})$, for which
$p=2$, or $\mathfrak{H}(\bb{H}^{r})$, for which $p=4$. The explicit
description of
the irreducible representations of these three algebras will be
written in terms of the canonical matrix representations of complex numbers
and quaternions. Specifically we define the maps
$\Gvf,\psi:\bb{C}\to\End_{\bb{R}}(\bb{R}^{2})$ and
$Q:\bb{H}\to\End_{\bb{R}}(\bb{R}^{4})$ as follows
\begin{equation}
  \label{phipsi}
\Gvf(x+iy)=\mat{x}{-y}{y}{x},\qquad\psi(x+iy)=\mat{x}{y}{y}{-x},  
\end{equation}
\begin{equation}
  \label{Qdef}
Q(q_{0}+iq_{1}+jq_{2}+kq_{3})=
\mat{\Gvf(q_{0}+iq_{1})}{-\psi(q_{2}+iq_{3})}{\psi(q_{2}+iq_{3})}{\Gvf(q_{0}+iq_{1})}.  
\end{equation}
These functions have the properties
\[
\Gvf(a)z=az,\quad\psi(a)z=a\bra{z},\quad Q(q)h=qh.
\]
Additionally
\[
\Gvf(a)^{T}=\Gvf(a),\quad\psi(a)^{T}=\psi(a),\quad Q(q)^{T}=Q(\bra{q}).
\]
For a collection of $d\times d$ matrices $M_{\Ga\Gb}$, $1\le\Ga\le r$,
$1\le\Gb\le r$ the notation $(M_{\Ga\Gb})$ stands for the $dr\times dr$ matrix
given in block-form by the $r\times r$ block-matrix, whose $\Ga\Gb$-blocks are
$d\times d$ matrices $M_{\Ga\Gb}$.
\begin{theorem}
  \label{th:irrep1}
  Up to an orthogonal conjugation a simple formally real special Jordan algebra
  $\mathfrak{J}$ with $r\ge 3$ has a unique irreducible representation by
  matrices in $\Sym(\bb{R}^{n})$, given explicitly by the following formulas
\begin{enumerate}
\item[(a)] $\mathfrak{J}=\Sym(\bb{R}^{r})$. Then $n=r$ and $\Phi(J)=J$, $J\in\Sym(\bb{R}^{r})$
\item[(b)] $\mathfrak{J}=\mathfrak{H}(\bb{C}^{r})$. Then $n=2r$ and
$
\mathfrak{H}(\bb{C}^{r})\ni J\mapsto\Phi(J)=(\varphi(J_{\Ga\Gb})).
$
\item[(c)] $\mathfrak{J}=\mathfrak{H}(\bb{H}^{r})$. Then $n=4r$ and
$
\mathfrak{H}(\bb{H}^{r})\ni J\mapsto\Phi(J)=(Q(J_{\Ga\Gb})).
$
\end{enumerate}
\end{theorem}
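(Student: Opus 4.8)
The plan is to refine the Peirce (block) decomposition of $\Pi=\Phi(\mathfrak{J})$ already in hand into a full coordinatization, so that the coordinate algebra can only be $\bb{R}$, $\bb{C}$ or $\bb{H}$ and the representation is read off directly. Since $\mathfrak{J}$ is special it is not the exceptional algebra $\mathfrak{M}_3^8$, so by Theorem~\ref{th:jvnw3} either $\mathfrak{J}\cong\Sym(\bb{R}^r)$, $\mathfrak{H}(\bb{C}^r)$ or $\mathfrak{H}(\bb{H}^r)$, whence $p\in\{1,2,4\}$ by Remark~\ref{rem:jvnw}. Recall that $\Phi(E_\Ga)=P_{W_\Ga}$ with $\bb{R}^n=W_1\oplus\cdots\oplus W_r$ orthogonal and $\dim W_\Ga=d=n/r$, that the $\Ga\Gb$-blocks $\Hat{X}_\mu^{\Ga\Gb}$ of the connecting elements lie in $O(d)$, and that $Y_\mu^{\Ga\Gb}=\Hat{X}_\mu^{\Ga\Gb}(\Hat{X}_p^{\Ga\Gb})^{T}$ obey the Clifford-type relations (\ref{RH}). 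First I would fix an arbitrary o.n.b.\ of $W_1$ and, for $\Ga=2,\ldots,r$, transport it to $W_\Ga$ by the orthogonal map whose matrix is $\Hat{X}_p^{1\Ga}$; this is conjugation of $\Pi$ by a block-diagonal orthogonal matrix, after which the $1\Ga$-block of $X_p^{1\Ga}$ is $I_d$, hence $\Hat{X}_\mu^{1\Ga}=Y_\mu^{1\Ga}$, and $Y_1^{1\Ga},\ldots,Y_{p-1}^{1\Ga}$ are $p-1$ pairwise anticommuting orthogonal complex structures on $\bb{R}^d$. Set $\bb{D}_\Ga=\bb{R}I_d\oplus\bb{R}Y_1^{1\Ga}\oplus\cdots\oplus\bb{R}Y_{p-1}^{1\Ga}\subseteq\End(\bb{R}^d)$, the span of the $1\Ga$-blocks of $\CM^{1\Ga}$.

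The crux is the triangle product. Because $r\ge 3$ there is a third index available, and the Peirce rule gives $\CM^{1\Ga}*\CM^{\Ga\Gb}\subseteq\CM^{1\Gb}$ for distinct indices; computing in block form, the $1\Gb$-block of $X_\mu^{1\Ga}*X_\nu^{\Ga\Gb}$ equals $\hf\,\Hat{X}_\mu^{1\Ga}\Hat{X}_\nu^{\Ga\Gb}$. Comparing with the explicit multiplication table of $\mathfrak{J}=\mathfrak{H}(\bb{D}^r)$ (in which $X_\mu^{\Ga\Gb}$ is the $\Ga\Gb$ matrix unit carrying the coordinate unit $e_\mu\in\bb{D}$, and $X_\mu^{\Ga\Gb}*X_\nu^{\Gb\Gg}$ is the prescribed combination of the $X_\lambda^{\Ga\Gg}$ that encodes $e_\mu e_\nu$ in $\bb{D}$) forces all the $\bb{D}_\Ga$ to coincide with one subalgebra $\bb{D}\subseteq\End(\bb{R}^d)$ and forces the $Y_\mu^{1\Ga}$ to satisfy not merely (\ref{RH}) but the full multiplication table of the imaginary units of $\bb{D}$ (for instance $Y_1Y_2=\pm Y_3$ when $p=4$, which does not follow from (\ref{RH}) alone). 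Hence $\bb{D}\cong\bb{R},\bb{C},\bb{H}$ for $p=1,2,4$, and $\bb{R}^d$ is a right $\bb{D}$-module on which the connecting elements act $\bb{D}$-linearly. If $\bb{R}^d\cong\bb{D}^k$ with $k\ge 2$, choosing a rank-one $\bb{D}$-submodule $W_1'\subsetneq W_1$ and transporting it to each $W_\Ga$ by the invertible connecting elements yields a proper $\Pi$-invariant subspace $\bigoplus_\Ga W_\Ga'$, contradicting irreducibility; therefore $k=1$ and $d=\dim_{\bb{R}}\bb{D}\in\{1,2,4\}$.

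It remains to reassemble and to verify uniqueness. With $W_\Ga\cong\bb{D}$, pick on each $W_\Ga$ the o.n.b.\ realizing $\bb{D}$ acting on itself by left multiplication through $\Gvf$, resp.\ $Q$, of (\ref{phipsi})--(\ref{Qdef}) (for $\bb{D}=\bb{R}$ nothing is needed); relative to these bases the $\Ga\Gb$-block of $X_\mu^{\Ga\Gb}$ becomes $\Gvf(e_\mu)$ in case (b) and $Q(e_\mu)$ in case (c), so an arbitrary $J\in\mathfrak{J}$ maps to $(\Gvf(J_{\Ga\Gb}))$, resp.\ $(Q(J_{\Ga\Gb}))$, resp.\ $J$, with $n=r\dim_{\bb{R}}\bb{D}$; these are precisely (a)--(c), and one checks directly that each formula does define an irreducible symmetric representation. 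For uniqueness up to orthogonal conjugation: the only choices made were the initial o.n.b.\ of $W_1$ and the connecting bases $\{X_\mu^{\Ga\Gb}\}$, and two admissible choices differ by an element of $O(d)$ times an automorphism of $\bb{D}$, which is implemented by conjugation by an orthogonal matrix; equivalently, any invertible $S$ intertwining two such representations satisfies $S^{T}S\in\End_{\mathfrak{J}}(\bb{R}^n)$, which by real Schur is a positive operator whose positive square root $R$ again commutes with $\Phi(\mathfrak{J})$, so that $SR^{-1}\in O(n)$ conjugates one representation into the other. The step I expect to be the main obstacle is the triangle computation: extracting from the cross-block Jordan products the relations that upgrade the bare Clifford relations (\ref{RH}) to the multiplication table of an associative coordinate algebra; the quaternionic case $p=4$ is the delicate one, since one must exclude ``octonionic'' closure and fix the signs --- which is exactly why the non-special algebra $\mathfrak{M}_3^8$, where $p=8$, does not occur here.
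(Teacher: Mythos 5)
Your proposal is correct and follows essentially the same route as the paper's proof: normalize by the block-diagonal orthogonal conjugation built from the blocks of the ``unit'' connecting elements $X_{p}^{1\Ga}$ (the paper's matrix $R$ with blocks $\Phi_{1\Ga}(1)$), use the triangle products across three distinct Peirce indices --- available precisely because $r\ge 3$ --- to upgrade the Clifford relations (\ref{RH}) to an associative homomorphism of the coordinate algebra $\bb{D}=\bb{R},\bb{C},\bb{H}$ into $\End_{\bb{R}}(\bb{R}^{d})$ with $\Psi_{12}(q)^{T}=\Psi_{12}(\bra{q})$, and invoke irreducibility to force $d=\dim_{\bb{R}}\bb{D}$ and the canonical forms $\Gvf$, $Q$. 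The only cosmetic differences are that the paper phrases the triangle step directly in terms of the block maps $\Phi_{\Ga\Gb}$ of the known algebra $\mathfrak{H}(\bb{K}^{r})$ and defers the uniqueness of the normalized representation of $\bb{K}$ (including the sign issue for $\bb{H}$ you flag) to Remarks~\ref{rem:c} and \ref{rem:q}, whereas you supply a Schur-type intertwiner argument.
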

\begin{proof}
  Let $\bb{K}$ denote the division algebra $\bb{R}$, $\bb{C}$ or $\bb{H}$ in
  the definition of the algebra $\mathfrak{J}$. For an irreducible
  representation $\Phi:\mathfrak{J}\to\Sym(\bb{R}^{n})$ ($n=dr$) and $\Ga\not=\Gb$ let
$\Phi_{\Ga\Gb}:\bb{K}\to\CU^{\Ga\Gb}$, while $\Phi_{\Ga\Ga}(1)=I_{d}$, so that
$\Phi(J)=(\Phi_{\Ga\Gb}(J_{\Ga\Gb}))$. The maps $\Phi_{\Ga\Gb}$ satisfy
\[
\Phi_{\Ga\Gb}(q)^{T}=\Phi_{\Gb\Ga}(\bra{q}),\qquad
\Phi_{\Ga\Gb}(q)\Phi_{\Ga\Gb}(q)^{T}=|q|^{2}I_{d},\qquad
\Phi_{\Ga\Gb}(q)\Phi_{\Gb\Gg}(h)=\Phi_{\Ga\Gg}(qh),\quad\Ga\not=\Gg
\]
In particular, $\Phi_{\Ga\Gb}(1)\in O(d)$. Let
\[
R=\left[
  \begin{array}{cccc}
    I_{d}&&&0\\
    &\Phi_{12}(1)&&\\
    &&\ddots&\\
    0&&&\Phi_{1r}(1)
  \end{array}\right].
\]
Then $R\in O(n)$ and $\Psi(J)=R\Phi(J)R^{T}$ is an orthogonally equivalent
irreducible representation of $\mathfrak{J}$ in $\Sym(\bb{R}^{n})$, such that
$\Psi_{1\Ga}(1)=I_{d}$, $\Ga=1,\ldots,r$. Let $\Ga\not=\Gb$ and
$\Gb\not=1$. Then
\[
\Psi_{\Ga\Gb}(q)=\Psi_{1\Ga}(1)\Psi_{\Ga\Gb}(q)=\Psi_{1\Gb}(q).
\]
If in addition, $\Ga\not=1$ (which is possible \IFF $r\ge 3$), then
\[
\Psi_{\Ga\Gb}(q)=\Psi_{\Gb\Ga}(\bra{q})^{T}=\Psi_{1\Ga}(\bra{q})^{T}=\Psi_{\Ga1}(q)
\]
Hence, when $\Ga\not=\Gb$, $\Ga\not=1$, $\Gb\not=1$ we have
\[
\Psi_{\Ga\Gb}(q)\Psi_{\Ga\Gb}(h)=\Psi_{\Ga1}(q)\Psi_{1\Gb}(h)=\Psi_{\Ga\Gb}(qh).
\]
For any $\Gb\not=1$ there is $\Ga\not=1$ and $\Ga\not=\Gb$, since $r\ge 3$. Then
$\Psi_{1\Gb}(q)=\Psi_{\Ga\Gb}(q)$. Similarly, for any $\Ga\not=1$ there is
$\Gb\not=1$ and $\Ga\not=\Gb$, and it follows that
$\Psi_{\Ga1}(q)=\Psi_{\Ga\Gb}(q)$. Hence, for any $\Ga\not=\Gb$ the functions
$\Psi_{\Ga\Gb}$ are algebra homomorphisms. In particular, if $\Ga\not=\Gb$ we have
\[
\Psi_{\Gb\Ga}(q)=\Psi_{\Ga\Gb}(\bra{q})^{T}=|q|^{2}(\Psi_{\Ga\Gb}(\bra{q}))^{-1}=
\Psi_{\Ga\Gb}(|q|^{2}\bra{q}^{-1})=\Psi_{\Ga\Gb}(q).
\]
In particular, $\Psi_{21}(q)=\Psi_{12}(q)$. Also, for any $\Ga\not=1,2$
\[
\Psi_{1\Ga}(q)=\Psi_{\Ga1}(q)=\Psi_{\Ga2}(q)=\Psi_{12}(q).
\]
Finally, if $\Ga\not=\Gb$, $\Ga\not=1$, $\Gb\not=1$ then
\[
\Psi_{\Ga\Gb}(q)=\Psi_{1\Gb}(q)=\Psi_{12}(q).
\]
Thus, there is a division algebra homomorphism
$\Psi_{12}:\bb{K}\to\End_{\bb{R}}(\bb{R}^{d})$ such that
$\Psi_{12}(q)^{T}=\Psi_{12}(\bra{q})$ and $\Psi_{\Ga\Gb}(q)=\Psi_{12}(q)$ for
all $\Ga\not=\Gb$ and all $q\in\bb{K}$. The homomorphism $\Psi_{12}$ is
non-zero and hence injective (since $\bb{K}$ is a simple algebra over
$\bb{R}$). If the associative algebra
$\CU=\Psi_{12}(\bb{K})\subset\End_{\bb{R}}(\bb{R}^{d})$ has a proper invariant
subspace $V\subset\bb{R}^{d}$ then the subspace
\[
\underbrace{V\oplus\ldots\oplus V}_{r\text{ times}}\subset
\underbrace{\bb{R}^{d}\oplus\ldots\oplus\bb{R}^{d}}_{r\text{ times}}=\bb{R}^{n}
\]
is a proper invariant subspace of the representation $\Psi(\mathfrak{J})$. We
conclude that $\Psi_{12}$ must be an irreducible representation of $\bb{K}$
for an irreducible representation $\Psi$ of $\mathfrak{J}$. It remains to note
that there is a unique\footnote{For the sake of completeness this statement
  will be a consequence of our analysis of the case $r=2$. See
  Remarks~\ref{rem:c} and \ref{rem:q} below.} (up to an orthogonal conjugation) irreducible
representation $\Psi_{12}$ of $\bb{K}$ on $\bb{R}^{d}$ satisfying
$\Psi_{12}(q)^{T}=\Psi_{12}(\bra{q})$. It is given explicitly by the following
formulas
\begin{itemize}
\item If $\bb{K}=\bb{R}$ then $d=1$ and $\Psi_{12}(q)=q$.
\item If $\bb{K}=\bb{C}$ then $d=2$ and $\Psi_{12}(q)=\Gvf(q)$, given by (\ref{phipsi}).
\item If $\bb{K}=\bb{H}$ then $d=4$ and $\Psi_{12}(q)=Q(q)$, given by (\ref{Qdef}).
\end{itemize}
\end{proof}

\subsubsection{Case $r=2$}
\label{sss:re2}
When $r=2$ we have
\[
\Pi=\left\{\mat{\Gl I_{n/2}}{A}{A^{T}}{\mu I_{n/2}}:\{\Gl,\mu\}\subset\bb{R},\ A\in\CU\right\}.
\]
The conditions (\ref{RH}) on the basis $(I,Y_{1},\ldots,Y_{p-1})$ of
$\CU^{12}=\CU$ are both necessary and sufficient for $\Pi$ to be closed with
respect to the Jordan product (\ref{Jprod}). The irreducibility condition is
equivalent to the requirement that all matrices in $\CU$ have no common proper
invariant subspace. Indeed, if $\CU$ has a proper invariant subspace $V$ then
the space $\CV=\{(v_{1},v_{2}):\{v_{1},v_{2}\}\subset V\}$ is a proper
invariant subspace for $\Pi$. If $\CU$ has no proper invariant subspaces and
if $\CV$ is a proper invariant subspace of $\Pi$ then $(v,w)\in\CV$ implies
that $(v,0)=K_{1}(v,w)\in\CV$ and $(0,w)=K_{2}(v,w)\in\CV$, where
$K_{1}\in\Pi$ corresponds to $\Gl=1$, $\mu=0$, $A=0$ and $K_{2}\in\Pi$
corresponds to $\Gl=0$, $\mu=1$, $A=0$.  Then there are subspaces $V_{1}$ and
$V_{2}$ of $\bb{R}^{n/2}$ such that $\CV=\{(v_{1},v_{2}):v_{1}\in V_{1},\
v_{2}\in V_{2}\}$. The invariance of $\CV$ is then equivalent to the condition
that $\CU$ maps $V_{1}$ into $V_{2}$ and $V_{2}$ into $V_{1}$. However,
$I_{n/2}\in\CU$ and hence, if $v_{1}\in V_{1}$ then $v_{1}\in V_{2}$ and
conversely, if $v_{2}\in V_{2}$ then $v_{2}\in V_{1}$. It follows that
$V_{1}=V_{2}=V$. But then $V$ must be an invariant subspace for
$\CU$. Therefore, $V$ is either $\{0\}$ or $\bb{R}^{n/2}$, in which case $\CV$
is also either $\{0\}$ or $\bb{R}^{n}$.

The problem of identifying subspaces $\CU$ as described above is called a real
Radon-Hurwitz problem \cite{hurw22,radon22}, who studied it in connection with
the question of composition of quadratic forms. This problem has connections
to self-dual 2-forms \cite{deko03}, linearly independent vector fields on
spheres \cite{adams62}, system of hyperbolic conservation laws \cite{alp65},
Clifford algebras \cite{deoz07}, etc. In particular the matrices
$Y_{1},\ldots,Y_{p-1}$ in (\ref{RH}) are the generators of the
$2^{p-1}$-dimensional Clifford algebra $\CC\ell_{p-1,0}(\bb{R})$. We are
interested in an explicit and complete characterization of all possible
subspaces $\CU$ as above. While Hurwitz solved this problem in the complex
case, his solution is difficult to adapt to the real case. Here we present the
alternative solution based on the representation theory of finite groups
\cite{eckm42}.  Following \cite{eckm42} we let $G_{p}$ be a finite group with
generators $\Gve$, $a_{1},\ldots,a_{p-1}$, $p\ge 2$, satisfying the relations
\[
\Gve^{2}=1,\quad a_{k}^{2}=\Gve,\quad\Gve a_{k}=a_{k}\Gve,\quad
a_{k}a_{l}=\Gve a_{l}a_{k},\quad k,l=1,\ldots,p-1,\quad k\not=l.
\]
Then, the matrices $Y_{1},\ldots,Y_{p-1}$ satisfying (\ref{RH}) describe an
irreducible orthogonal representation of the group $G_{p}$ that sends $\Gve$
to $-I_{n/2}$. Eckmann \cite{eckm42} has computed the number, type, and
dimensions of such irreps of $G_{p}$ (see Appendix~\ref{app:eck}). His results
are summarized in the following theorem.
\begin{theorem}[Eckmann]
  \label{th:Eckmann}
All non-isomorphic irreducible real representations of $G_{p}$ that map $\Gve$
to $-I$ are characterized as follows.
\begin{itemize}
\item $p=1,7\mod 8$. Then the unique real irrep $V_{p}$ is a representation of real type
and  $d(p)=\dim V_{p}=2^{\frac{p-1}{2}}$. 
\item $p=3,5\mod 8$. Then the unique real irrep $V_{p}$ is a representation of
  quaternionic type 
and  $d(p)=\dim V_{p}=2^{\frac{p+1}{2}}$. 
\item $p=2,6\mod 8$. Then the unique real irrep $V_{p}$ is a representation of complex
  type 
and $d(p)=\dim V_{p}=2^{\frac{p}{2}}$. 
\item $p=0\mod 8$. Then there are two distinct irreps $V_{p}^{+}$ and $V_{p}^{-}$
  both of real type and  $d(p)=\dim V_{p}^{\pm}=2^{\frac{p-2}{2}}$. 
\item $p=4\mod 8$. Then there are two distinct irreps $V_{p}^{+}$ and $V_{p}^{-}$
  both of quaternionic type and  $d(p)=\dim V_{p}^{\pm}=2^{\frac{p}{2}}$. 
\end{itemize}
\end{theorem}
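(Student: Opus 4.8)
The plan is to identify the representations in question with modules over a Clifford algebra and then invoke the structure theory of real Clifford algebras---or, equivalently, to argue directly with the finite $2$-group $G_{p}$ and its character theory, which is Eckmann's route \cite{eckm42}. Since $\Gve$ is central and the relations (\ref{RH}) (together with the skew-symmetry $Y^{T}=-Y$, which is automatic once the representation is made orthogonal, as one always may) are exactly the defining relations for the generators of $\CC\ell_{p-1,0}(\bb{R})$, an irreducible real representation of $G_{p}$ with $\Gve\mapsto -I$ is the same thing as an irreducible real $\CC\ell_{p-1,0}(\bb{R})$-module. First I would record the elementary facts about $G_{p}$: using $a_{k}^{2}=\Gve$ to reduce exponents and $a_{k}a_{l}=\Gve a_{l}a_{k}$ to reorder, every element is uniquely $\Gve^{\Gd}a_{1}^{\Ge_{1}}\cdots a_{p-1}^{\Ge_{p-1}}$, so $|G_{p}|=2^{p}$; every commutator lies in $\{1,\Gve\}$, so $[G_{p},G_{p}]=\{1,\Gve\}$ and there are exactly $2^{p-1}$ one-dimensional complex characters, all trivial on $\Gve$. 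Hence $\sum(\dim_{\bb{C}}V)^{2}=2^{p-1}$ over the irreducibles with $\Gve\mapsto -1$, which are the ones to enumerate.

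Next I would encode $G_{p}$ by the $\bb{F}_{2}$-quadratic form $Q$ on $G_{p}/\langle\Gve\rangle$ with $Q(\bar a_{k})=1$ (recording $a_{k}^{2}=\Gve$) and polar form $B(\bar a_{i},\bar a_{j})=1\iff i\ne j$ (recording the commutators). The matrix of $B$ over $\bb{F}_{2}$ is $J+I$ with $J$ all-ones, whose rank is $p-1$ when $p-1$ is even and $p-2$ when $p-1$ is odd, the radical in the latter case being spanned by $\bar a_{1}\cdots\bar a_{p-1}$. When $p-1$ is even, $G_{p}$ is extraspecial of order $2^{1+(p-1)}$, hence has a unique complex irreducible $\Gr$ with $\Gve\mapsto -1$, of dimension $2^{(p-1)/2}$; when $p-1$ is odd, the centre is $(\bb{Z}/2)^{2}$ and, after fixing $\Gve\mapsto -1$, there are exactly two complex irreducibles, distinguished by the value of $z:=a_{1}\cdots a_{p-1}$, each of dimension $2^{(p-2)/2}$. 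To descend to $\bb{R}$ I would compute the Frobenius--Schur indicator $\Gv_{2}(\Gr)=|G_{p}|^{-1}\sum_{g}\chi_{\Gr}(g^{2})$; since $g^{2}$ is always a power of $\Gve$, this collapses to $(\dim\Gr/|G_{p}|)(\#\{g:g^{2}=1\}-\#\{g:g^{2}=\Gve\})$, a signed Gauss sum whose sign is the Arf invariant of $Q$. Tracking that Arf invariant as a function of $p\bmod 8$, and combining it with the dimension multiplier ($1$ for real type, $2$ for complex or quaternionic type) and with the one-versus-two count of complex irreducibles above, yields precisely the five cases of the theorem, with real dimension $d(p)$ as listed.

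An equivalent and in some ways cleaner route, which I would use to double-check and which is essentially Eckmann's, is inductive: construct the irreps by hand for $p=1,\dots,8$ (so $\CC\ell_{0,0}\cong\bb{R}$, $\CC\ell_{1,0}\cong\bb{C}$, $\CC\ell_{2,0}\cong\bb{H}$, $\CC\ell_{3,0}\cong\bb{H}\oplus\bb{H}$, and so on up to $\CC\ell_{7,0}\cong\bb{R}(8)\oplus\bb{R}(8)$), and then establish the periodicity isomorphism $\CC\ell_{p+7,0}\cong\CC\ell_{p-1,0}\otimes_{\bb{R}}\bb{R}(16)$ by tensoring the $Y_{k}$ with a fixed system of eight mutually anticommuting orthogonal $16\times16$ matrices with $Y^{T}=-Y$. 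This multiplies $d(p)$ by $16$ and preserves both the type and the number of non-isomorphic irreps, so the eight base cases determine the entire list.

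The genuinely delicate point is the mod-$8$ dependence of the Arf invariant (equivalently the sign of the Gauss sum, equivalently the Frobenius--Schur type of $\Gr$), together with the bookkeeping in the case $p-1$ odd, where the rank-$2$ centre accounts both for the two-irreducible cases $p\equiv 0,4\bmod 8$ and for the fact that in the complex-type cases $p\equiv 2,6\bmod 8$ the two complex-conjugate central characters of $z$ fuse into a single real irreducible of doubled dimension. This is exactly the appearance of Bott periodicity through elementary $2$-group theory; the explicit periodicity construction above makes it as painless as possible, and the full computation is carried out in Appendix~\ref{app:eck}.
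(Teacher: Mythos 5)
Your argument is correct and follows essentially the same route as the paper's Appendix~\ref{app:eck}: count the complex irreducibles of $G_{p}$ sending $\varepsilon$ to $-1$ via the commutator subgroup and the sum-of-squares identity, pin down their dimensions, compute the Frobenius--Schur indicator using $g^{2}\in\{1,\varepsilon\}$, and descend to real representations, with the complex-type pair $U,\bar{U}$ fusing into one real irrep of doubled dimension. The differences are presentational only: you invoke extraspecial-group structure and the Arf invariant where the paper counts conjugacy classes explicitly and evaluates the indicator in the closed form $\mathrm{sign}(\cos(\pi p/4))$, and your Clifford-periodicity construction is an added cross-check rather than a new ingredient.
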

We note that the function $d(p)$ given explicitly in Theorem~\ref{th:Eckmann}
can also be defined recursively by
\begin{equation}
  \label{dp}
  d(p+8)=16d(p),\qquad d(1)=1,\ d(2)=2,\ d(3)=d(4)=4,\ d(5)=d(6)=d(7)=d(8)=8.
\end{equation}
It remains to construct the representations $V_{p}$ explicitly.  It is sufficient
to indicate the images of $a_{1},\ldots,a_{p-1}$ (since it is required that
$\Gve\mapsto-I_{d(p)}$). To describe the answer we need to introduce the
following notation.
\begin{equation}
  \label{HatQdef}
\Hat{Q}(q_{0}+iq_{1}+jq_{2}+kq_{3})=
\mat{\Gvf(q_{0}+iq_{1})}{\Gvf(q_{2}+iq_{3})}{-\Gvf(q_{2}-iq_{3})}{\Gvf(q_{0}-iq_{1})},  
\end{equation}
where $\Gvf$ was given in (\ref{phipsi}). The function $\Hat{Q}$ satisfies
\[
\Hat{Q}(q_{1})\Hat{Q}(q_{2})=\Hat{Q}(q_{1}q_{2}),\quad\Hat{Q}(q)^{T}=\Hat{Q}(\bra{q}),\quad
\Hat{Q}(q_{1})Q(q_{2})=Q(q_{2})\Hat{Q}(q_{1}).
\]
For $\{q,h\}\subset\bb{H}$ we also define
\begin{equation}
  \label{Odef}
 \bb{O}(q,h)=\mat{Q(q)}{\Hat{Q}(h)}{-\Hat{Q}(\bra{h})}{Q(\bra{q})},
\end{equation}
where the maps $Q$ and $\Hat{Q}$ are defined in (\ref{Qdef}) and
(\ref{HatQdef}), respectively.
The map $\bb{O}$ has the following properties
\[
\bb{O}(q,h)^{T}=\bb{O}(\bra{q},-h),\quad\bb{O}(q,h)\bb{O}(q,h)^{T}=(|q|^{2}+|h|^{2})I_{8}.
\]

For $2\le p\le 9$ we have the following explicit representations, which are
slightly modified versions of the ones in \cite{deko03}.
\begin{reflist}
  \label{lst:irrep}
\end{reflist}
\begin{itemize}
\item $p=2$, $d(p)=2$, $\rho_{2}(a_{1})=\Gvf(i)$.
  \begin{remark}
    \label{rem:c}
The uniqueness of the irreducible representation $V_{2}$ implies that up to an
orthogonal conjugation the irreducible representation of $\bb{C}$ on
$\End_{\bb{R}}(\bb{R}^{d})$ satisfies $d=2$ and is given by $\bb{C}\ni c\mapsto\Gvf(c)$.
  \end{remark}
\item $p=3$, $d(p)=4$, $\rho_{3}(a_{1})=Q(i)$, $\rho_{3}(a_{2})=Q(j)$
\item $p=4$, $d(p)=4$. There are two non-isomorphic representations:
\[
\rho_{4}^{\pm}(a_{1})=\pm Q(i),\qquad\rho_{4}^{\pm}(a_{2})=\pm Q(j),\qquad
\rho_{4}^{\pm}(a_{3})=\pm Q(k).
\]
Indeed,
\[
\rho_{4}^{+}(a_{1}a_{2})=\rho_{4}^{+}(a_{3}),\qquad
\rho_{4}^{-}(a_{1}a_{2})=-\rho_{4}^{-}(a_{3}).
\]
\begin{remark}
  \label{rem:q}
Let $\Psi_{0}:\bb{H}\to\End_{\bb{R}}(\bb{R}^{d})$ be a representation of
$\bb{H}$. If we define $\rho(\Gve)=-I_{d}$, $\rho(a_{1})=\Psi_{0}(i)$, $\rho(a_{2})=\Psi_{0}(j)$,
$\rho(a_{3})=\Psi_{0}(k)$ then $\rho$ will be a representation of $G_{4}$ on
$\bb{R}^{d}$. Clearly, $\rho$ is irreducible \IFF $\Psi_{0}$ is
irreducible. Thus, $d=4$ and, up to the orthogonal conjugation, either
\[
\Psi_{0}(i)=Q(i),\qquad\Psi_{0}(j)=Q(j),\qquad\Psi_{0}(k)=Q(k),
\]
or
\[
\Psi_{0}(i)=-Q(i),\qquad\Psi_{0}(j)=-Q(j),\qquad\Psi_{0}(k)=-Q(k).
\]
However, the latter choice results in
$\Psi_{0}(ij)=-\Psi_{0}(i)\Psi_{0}(j)$. Hence, up to an orthogonal
conjugation $\Psi_{0}(q)=Q(q)$.
\end{remark}
\item $p=5$, $d(p)=8$,
\[
\rho_{5}(a_{1})=\bb{O}(0,1),\quad\rho_{5}(a_{2})=\bb{O}(0,i),\quad\rho_{5}(a_{3})=\bb{O}(0,j),\quad\rho_{5}(a_{4})=\bb{O}(0,k).
\]
\item $p=6$, $d(p)=8$,
\[
\rho_{6}(a_{1})=\bb{O}(0,i),\quad\rho_{6}(a_{2})=\bb{O}(0,j),\quad\rho_{6}(a_{3})=\bb{O}(0,k),
\]
\[
\rho_{6}(a_{4})=\bb{O}(i,0),\quad\rho_{6}(a_{5})=\bb{O}(j,0).
\]
\item $p=7$, $d(p)=8$,
\[
\rho_{7}(a_{1})=\bb{O}(0,i),\quad\rho_{7}(a_{2})=\bb{O}(0,j),\quad\rho_{7}(a_{3})=\bb{O}(0,k),
\]
\[
\rho_{7}(a_{4})=\bb{O}(i,0),\quad\rho_{7}(a_{5})=\bb{O}(j,0),\quad\rho_{7}(a_{6})=\bb{O}(k,0).
\]
\item $p=8$, $d(p)=8$: There are two non-isomorphic representations:
  $\rho_{8}^{-}(a_{i})=-\rho_{8}^{+}(a_{i})$, $i=1,\ldots,7$
\[
\rho_{8}^{-}(a_{1})=\bb{O}(0,1),\quad\rho_{8}^{-}(a_{2})=\bb{O}(0,i),\quad\rho_{8}^{-}(a_{3})=\bb{O}(0,j),\quad\rho_{8}^{-}(a_{4})=\bb{O}(0,k),
\]
\[
\rho_{8}^{-}(a_{5})=\bb{O}(i,0),\quad\rho_{8}^{-}(a_{6})=\bb{O}(j,0),\quad\rho_{8}^{-}(a_{7})=\bb{O}(k,0).
\]
The irrep $\rho_{8}^{-}$ is not isomorphic to $\rho_{8}^{+}$ because
\[
\rho_{8}^{+}(a_{2}a_{3}a_{4}a_{5}a_{6}a_{7})=\rho_{8}^{+}(a_{1}),
\]
whereas
\[
\rho_{8}^{-}(a_{2}a_{3}a_{4}a_{5}a_{6}a_{7})=-\rho_{8}^{-}(a_{1}),
\]
\item $p=9$, $d(p)=16$.
\[
\rho_{9}(a_{1})=\psi(i)\otimes\bb{O}(i,0),\quad
\rho_{9}(a_{2})=\psi(i)\otimes\bb{O}(j,0),\quad
\rho_{9}(a_{3})=\psi(i)\otimes\bb{O}(k,0),
\]
\[
\rho_{9}(a_{4})=\psi(i)\otimes\bb{O}(0,i),\quad
\rho_{9}(a_{5})=\psi(i)\otimes\bb{O}(0,j),\quad
\rho_{9}(a_{6})=\psi(i)\otimes\bb{O}(0,k),
\]
\[
\rho_{9}(a_{7})=\psi(i)\otimes\bb{O}(0,1),\quad
\rho_{9}(a_{8})=\Gvf(i)\otimes\bb{O}(1,0).
\]
\end{itemize}
For $m_{1}\times n_{1}$ matrix $A$ and $m_{2}\times n_{2}$ matrix $B$ the
tensor product notation $A\otimes B$ denotes $m_{1}m_{2}\times n_{1}n_{2}$
matrix written in block-form as
\[
A\otimes B=\left[
  \begin{array}{ccc}
    a_{11}B &\ldots&a_{1n_{1}}B\\
    \ldots &\ldots&\ldots\\
    a_{m_{1}1}B&\ldots&a_{m_{1}n_{1}}B
  \end{array}
\right].
\]

Suppose that representations $\rho_{p}$ of $G_{p}$ have
been constructed for $2\le p\le 9$. For $p\ge 10$ we define
\begin{equation}
  \label{recursion}
  \rho_{p}(a_{i})=\rho_{9}(a_{i})\otimes I_{d(p-8)},\ i=1,\ldots,8,\quad
\rho_{p}(a_{i})=\BB\otimes\rho_{p-8}(a_{i}),\ i=9,\ldots,p-1,
\end{equation}
where $\BB=\psi(1)\otimes I_{8}\in\Sym(\bb{R}^{16})$. 
We have $\BB^{2}=I_{16}$.
Therefore, $\rho_{p}(a_{i})^{2}=-I_{p}$ and
$\rho_{p}(a_{i})\rho_{p}(a_{j})=-\rho_{p}(a_{j})\rho_{p}(a_{i})$ if
$i\not=j$ and both $i$ and $j$ are either below 9 or above 8. We need to check
the anticommutativity property for $i\le 8$ and $j\ge 9$:
\[
\rho_{9}(a_{i})\BB\otimes\rho_{p-8}(a_{j})=-\BB\rho_{9}(a_{i})\otimes\rho_{p-8}(a_{j}).
\]
This will be satisfied if
\begin{equation}
  \label{anticomm}
\rho_{9}(a_{i})\BB=-\BB\rho_{9}(a_{i}).  
\end{equation}
We can verify (\ref{anticomm}) explicitly via the formulas for $\rho_{9}$ below.

In addition to the explicit representations $V_{p}$ it is also convenient to
have an explicit form of the subspaces 
\[
\CW_{p}=\Span\{\rho_{p}(a_{1}),\ldots,\rho_{p}(a_{p-1})\}\subset\Skew(\bb{R}^{d(p)}),\qquad
\CU=\CU_{p}=\bb{R}I_{d(p)}\oplus\CW_{p}.
\]
\begin{reflist}
  \label{lst:UW}
\end{reflist}
\begin{itemize}
\item $p=1$, $\CW_{1}=\{0\}\subset\bb{R}$, $\CU_{1}=\bb{R}$
\item $p=2$,
\[
\CW_{2}=\bb{R}\Gvf(i)=\Skew(\bb{R}^{2}),
\]
\[
\CU_{2}=\{\Gvf(z):z\in\bb{C}\}.
\]
\item $p=3$,
\[
\CW_{3}=\{Q(q):q\in\bb{H},\ q=q_{1}i+q_{2}j\}.
\]
\[
\CU_{3}=\{Q(q):q\in\bb{H},\ q=q_{0}+q_{1}i+q_{2}j\}.
\]
\item $p=4$,
\[
\CW_{4}=\{Q(q):q\in\bb{H},\ \re(q)=0\},
\]
\[
\CU_{4}=\{Q(q):q\in\bb{H}\}.
\]
\item $p=5$,
\[
\CW_{5}=\{\bb{O}(0,h):h\in\bb{H}\},
\]
\[
\CU_{5}=\{\bb{O}(\Gl,h):h\in\bb{H},\ \Gl\in\bb{R}\}.
\]
\item $p=6$,
\[
\CW_{6}=\left\{\bb{O}(q,h):\{q,h\}\subset\bb{H},\ \re(h)=0,\ q=q_{1}i+q_{2}j\right\}.
\]
\[
\CU_{6}=\left\{\bb{O}(q,h):\{q,h\}\subset\bb{H},\ \re(h)=0,\ q=q_{0}+q_{1}i+q_{2}j\right\}.
\]
\item $p=7$,
\[
\CW_{7}=\left\{\bb{O}(q,h):\{q,h\}\subset\bb{H},\ \re(q)=0,\ \re(h)=0\right\},
\]
\[
\CU_{7}=\left\{\bb{O}(q,h):\{q,h\}\subset\bb{H},\ \re(h)=0\right\}.
\]
\item $p=8$,
\[
\CW_{8}=\left\{\bb{O}(q,h):\{q,h\}\subset\bb{H},\ \re(q)=0\right\},
\]
\[
\CU_{8}=\left\{\bb{O}(q,h):\{q,h\}\subset\bb{H}\right\}.
\]
\end{itemize}
The formula (\ref{recursion}) results in
\begin{equation}
  \label{Wp}
\CW_{p}=\left\{\mat{I_{8}\otimes A}{\bb{O}(q,h)\otimes I_{d(p-8)}}
{-\bb{O}(q,h)^{T}\otimes I_{d(p-8)}}{-I_{8}\otimes A}:
A\in\CW_{p-8},\ \{q,h\}\subset\bb{H}\right\},\quad p\ge 9,  
\end{equation}
where the spaces $\CW_{1},\ldots\CW_{8}$ are given in the List~\ref{lst:UW}
and $\bb{O}(q,h)$ is defined in (\ref{Odef}).

We remark that the formulas for $\CW_{p}$ and $\CU_{p}$, $p=1,\ldots,8$ are
somewhat arbitrary, since conjugation by any orthogonal transformation would produce
equally valid formulas. However, when $p=1,2,4$ and 8 the spaces $\CW_{p}$ and
$\CU_{p}$ are $O(d(p))$-invariant and therefore represent the canonical
forms. The formula (\ref{Wp}) implies that all spaces $\CW_{2^{s}}$ and
$\CU_{2^{s}}$ are canonical.

It is important to note that the two different representations of $G_{p}$ when $p=0\mod 4$
result in different representations of the spin factors $\mathfrak{S}_{N}$,
when $N=2\mod 4$ ($N\ge 6$), even though the images of $\Pi_{\mathfrak{S}_{N}}$
of $\mathfrak{S}_{N}$ under both representations are the same:
\begin{equation}
  \label{spinrep}
  \Pi_{\mathfrak{S}_{N}}=\left\{\mat{\Gl I_{d(N-2)}}{A}{A^{T}}{\mu I_{d(N-2)}}:
\{\Gl,\mu\}\subset\bb{R},\ A\in\CU_{N-2}=\bb{R}I_{d(N-2)}\oplus\CW_{N-2}\right\},
\end{equation}
where $\CW_{p}$ is given by (\ref{Wp}).
The existence
of the two non-isomorphic representations of $G_{p}$ is reflected in the
existence of the map $T:\Pi_{\mathfrak{S}_{N}}\to\Pi_{\mathfrak{S}_{N}}$,
which maps $\rho^{+}$ to $\rho^{-}$. As such it is a Jordan algebra
automorphism that cannot be written as an orthogonal conjugation. Conversely,
every Jordan algebra automorphism maps one representation of $G_{p}$ into
another. For those $p$ for which such a representation is unique the
automorphism must be an orthogonal conjugation. The Jordan algebra automorphism
$T:\Pi_{\mathfrak{S}_{N}}\to\Pi_{\mathfrak{S}_{N}}$ can be written explicitly as
\begin{equation}
  \label{Jauto}
T\mat{\Gl I_{d(N-2)}}{A}{A^{T}}{\mu I_{d(N-2)}}=
\mat{\Gl I_{d(N-2)}}{A^{T}}{A}{\mu I_{d(N-2)}},\qquad A\in\CU_{N-2},\ \{\Gl,\mu\}\subset\bb{R}.
\end{equation}

Thus, we have proved the following characterization of all irreducible
representations of spin factors $\mathfrak{S}_{N}$, $N\ge 3$
($\mathfrak{S}_{1}=\bb{R}$ and $\mathfrak{S}_{2}$ is not simple) in $\Sym(\bb{R}^{n})$.
\begin{theorem}
\label{th:irred}
Each spin factor $\mathfrak{S}_{N}$, $N\ge 3$ is represented by a
unique (up to the orthogonal conjugation) irreducible subalgebra
$\Pi_{\mathfrak{S}_{N}}\subset\Sym(\bb{R}^{2d(N-2)})$ given by (\ref{spinrep}). 
Moreover, each Jordan algebra automorphisms of
$\Pi_{\mathfrak{S}_{N}}$ can be represented by an orthogonal conjugation, unless
$N=2\mod 4$ ($N\ge 3$), in which case  each Jordan algebra automorphisms of
$\Pi_{\mathfrak{S}_{N}}$ can be represented by a composition of the map $T$
given by (\ref{Jauto}) and an orthogonal conjugation.
\end{theorem}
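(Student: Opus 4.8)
The plan is to combine the block description of simple $r=2$ Jordan algebras (Section~\ref{sss:re2}), Eckmann's classification (Theorem~\ref{th:Eckmann}), and the representation theory of the Clifford algebra attached to $\mathfrak{S}_{N}$. First I would settle uniqueness of the subalgebra. By the $r=2$ analysis, any irreducible subalgebra of $\Sym(\bb{R}^{n})$ isomorphic to $\mathfrak{S}_{N}$ is orthogonally conjugate to one of the form (\ref{spinrep}), whose off-diagonal data is $\CU=\bb{R}I_{d}\oplus\CW$ with $\CW$ spanned by the images of the generators $a_{1},\dots,a_{p-1}$, $p=N-2$, under an irreducible orthogonal representation of $G_{N-2}$ sending $\Gve$ to $-I_{d}$; here $d=d(N-2)$, $n=2d(N-2)$, and $\dim\mathfrak{S}_{N}=2+\dim\CU=N$ (consistent with Remark~\ref{rem:jvnw}). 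By Theorem~\ref{th:Eckmann} there is a single such representation unless $N\equiv 2\bmod 4$, when there are exactly two; but, as recorded in the discussion leading to (\ref{spinrep}), these two representations span the \emph{same} subspace $\CW_{N-2}$, hence give the same $\CU$ and the same algebra (\ref{spinrep}). Since (\ref{RH}) is necessary and sufficient for Jordan closure and irreducibility of $\CU$ is equivalent to that of (\ref{spinrep}), this algebra is a faithful irreducible representation of $\mathfrak{S}_{N}$, and it is the only one up to orthogonal conjugation.

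For the automorphisms, let $\Phi:\mathfrak{S}_{N}\to\Sym(\bb{R}^{n})$ be the standard representation with image $\Pi_{\mathfrak{S}_{N}}$; the spin factor has $\mathrm{Aut}(\mathfrak{S}_{N})\cong O(V)=O(N-1)$, where $V$ is the orthogonal complement of the unit $\bb{I}_{\mathfrak{S}_{N}}$, on which $v*v=Q(v)\bb{I}_{\mathfrak{S}_{N}}$ with $Q$ positive definite (an automorphism fixes $\bb{I}_{\mathfrak{S}_{N}}$, preserves the trace and hence $V$, preserves $Q$, and every element of $O(V)$ extends). The restriction $\Phi|_{V}$ is a representation of the Clifford algebra $\CC\ell_{N-1,0}(\bb{R})$ by symmetric matrices, and since $\Pi_{\mathfrak{S}_{N}}=\bb{R}I_{n}\oplus\Phi(V)$ has no common invariant subspace, this Clifford module is irreducible. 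Now a Jordan automorphism $\Theta$ of $\Pi_{\mathfrak{S}_{N}}$ yields a faithful irreducible representation $\Phi'=\Theta\circ\Phi$ with the same image, and $\Theta$ is an orthogonal conjugation of $\bb{R}^{n}$ precisely when $\Phi'$ is orthogonally equivalent to $\Phi$. A Schur-type argument — polarize an equivalence $\Phi'=S\Phi S^{-1}$, observe that $S^{T}S$ lies in the commutant $\bb{R}$, $\bb{C}$ or $\bb{H}$, and take a positive square root there — shows that linear equivalence of irreducible symmetric representations already forces orthogonal equivalence; hence the number of orthogonal-conjugation classes of such representations of $\mathfrak{S}_{N}$ equals the number of simple $\CC\ell_{N-1,0}(\bb{R})$-modules, which is one unless $N\equiv 2\bmod 4$, in which case it is two, the two being interchanged by the coset $O(V)\setminus SO(V)$ while $SO(V)$ lifts to $\mathrm{Spin}(V)$ and fixes each class.

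Assembling this: if $N\not\equiv 2\bmod 4$, $\Phi'$ is orthogonally equivalent to the unique $\Phi$, so every $\Theta$ is an orthogonal conjugation. If $N\equiv 2\bmod 4$, the two representations $\Phi^{\pm}$ built from the two $G_{N-2}$-representations $\rho^{\pm}$ have the same image $\Pi_{\mathfrak{S}_{N}}$, and the explicit map $T$ of (\ref{Jauto}) — which acts on (\ref{spinrep}) by $A\mapsto A^{T}$, i.e.\ as the identity on the $\bb{R}I_{d}$ part of $\CU$ and minus the identity on $\CW$, and which is a Jordan automorphism carrying $\Phi^{+}$ to $\Phi^{-}$ — is not an orthogonal conjugation, since $\rho^{+}\not\cong\rho^{-}$. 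For a general $\Theta$ in this case $\Phi'$ is orthogonally equivalent either to $\Phi^{+}$, in which case $\Theta$ is an orthogonal conjugation, or to $\Phi^{-}=T\circ\Phi^{+}$ (up to orthogonal conjugation), in which case $\Theta$ is a composition of an orthogonal conjugation with the map $T$ of (\ref{Jauto}). This yields the theorem.

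The step I expect to be the main obstacle is the identification of \emph{Jordan} automorphisms of the concrete algebra $\Pi_{\mathfrak{S}_{N}}\subset\Sym(\bb{R}^{n})$ with \emph{orthogonal} equivalences of Clifford modules, and in particular the upgrade from linear to orthogonal equivalence for irreducible symmetric representations, which has to be carried out keeping track of which of $\bb{R}$, $\bb{C}$, $\bb{H}$ is the commutant. The other delicate point, already isolated in the text, is that although there can be two non-isomorphic faithful irreducible representations of $\mathfrak{S}_{N}$ they share a common image; this is exactly why the subalgebra is unique yet the automorphism $T$ of (\ref{Jauto}) is a genuinely new symmetry precisely when $N\equiv 2\bmod 4$.
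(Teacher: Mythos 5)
Your proposal is correct and follows essentially the same route as the paper: reduce via the $r=2$ block structure to irreducible Clifford/$G_{N-2}$ representations, invoke Eckmann's count, observe that in the case $N\equiv 2\bmod 4$ the two non-isomorphic representations have the same image so the subalgebra is still unique, and read off the automorphism dichotomy from whether the representation class is unique. The only substantive addition is that you make explicit the polar-decomposition (Schur-type) upgrade from linear to orthogonal equivalence of irreducible orthogonal representations and the identification $\mathrm{Aut}(\mathfrak{S}_{N})\cong O(N-1)$, steps the paper leaves implicit.
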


\section{Structure theorem and \completeness}
\setcounter{equation}{0}
\label{sec:struct}
We can now summarize all our results and describe explicitly, up to an
orthogonal conjugation all Jordan subalgebras of $\Sym(\bb{R}^{n})$.
\begin{theorem}[Jordan subalgebras of $\Sym(\bb{R}^{n})$]~
\label{th:jast}
  \begin{enumerate}
  \item[(i)] Let $\Pi$ be a Jordan subalgebra of $\Sym(\bb{R}^{n})$. Then 
there exists an o.n.b. of $\bb{R}^{n}$ in which
\[
\Pi=\left\{\left[
  \begin{array}{cccc}
    A_{1}&&&0\\
    &\ddots&&\\
    &&A_{m}&\\
    0&&&0
  \end{array}\right]:A_{1}\in\Pi_{1},\ldots,A_{m}\in\Pi_{m}\right\},
\]
where each $\Pi_{\Ga}$ is a simple non-singular Jordan subalgebra of $\Sym(\bb{R}^{d_{\Ga}})$, for
some $d_{\Ga}\ge 1$ for which $d_{1}+\ldots+d_{m}\le n$. 
\item[(ii)] Let $\Pi$ be a simple non-singular Jordan subalgebra of
  $\Sym(\bb{R}^{n})$. Then there exits an o.n.b. in $\bb{R}^{n}$ in which $\Pi$ has one of the
  following forms
  \begin{itemize}
  \item[(a)] $\Pi=\{I_{n/r}\otimes A:A\in\Sym(\bb{R}^{r})\}$, $r\ge 1$;
  \item[(b)] $\Pi=\{I_{n/2r}\otimes A:A\in\mathfrak{H}(\bb{C}^{r})\}$, $r\ge 2$;
  \item[(c)] $\Pi=\{I_{n/4r}\otimes A:A\in\mathfrak{H}(\bb{H}^{r})\}$, $r\ge 2$;
  \item[(d)] $\Pi=\{I_{n/2d(N-2)}\otimes A:A\in\Pi_{\mathfrak{S}_{N}}\}$, $N=5,7,8,\ldots$, 
    where $\Pi_{\mathfrak{S}_{N}}$ is given by (\ref{spinrep}) and function
    $d(\cdot)$ is defined in (\ref{dp});
  \item[(e)] $\Pi=\left\{\mat{I_{s_{1}}\otimes A}{0}{0}{I_{s_{2}}\otimes TA}:
A\in\Pi_{\mathfrak{S}_{N}}\right\}$, $s_{1}+s_{2}=n/2d(N-2)$, $s_{1}>0$,
$s_{2}>0$, $N=2\mod 4$, $N\ge 6$, where the Jordan automorphism
$T:\Pi_{\mathfrak{S}_{N}}\to\Pi_{\mathfrak{S}_{N}}$ is given by (\ref{Jauto}).
  \end{itemize}
 \end{enumerate}
\end{theorem}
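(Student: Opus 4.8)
The plan is to assemble the statement from the structural results already established, so the argument is chiefly one of organization. For part~(i) I would first pass to $V=\CN(\Pi)^{\perp}$: by (\ref{red0}), in a suitable o.n.b.\ the algebra $\Pi$ consists of block-diagonal matrices with a non-singular upper-left block $\Pi_{0}\subset\Sym(V)$ and a trailing zero block of size $\dim\CN(\Pi)$. Applying Corollary~\ref{cor:JAstruct} to $\Pi_{0}$ yields an orthogonal decomposition $V=V_{1}\oplus\cdots\oplus V_{m}$ over which $\Pi_{0}$ splits, with each $\Pi_{\Ga}\subset\Sym(V_{\Ga})$ simple and, since $\CN(\Pi_{0})=\{0\}$ and $\Pi_{0}$ acts block-diagonally, non-singular. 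Setting $d_{\Ga}=\dim V_{\Ga}$ we have $d_{1}+\cdots+d_{m}=\dim V\le n$, with equality precisely when $\CN(\Pi)=\{0\}$, which is the form asserted in~(i).

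For part~(ii) let $\Pi\subset\Sym(\bb{R}^{n})$ be simple and non-singular. Theorem~\ref{th:str} gives an o.n.b.\ in which $\Pi$ takes the block form (\ref{simpleJA}), i.e.\ $\Pi=\{\mathrm{diag}(A,T_{1}A,\ldots,T_{k-1}A):A\in\Pi_{0}\}$ with $\Pi_{0}\subset\Sym(\bb{R}^{N})$ irreducible and the $T_{i}$ Jordan isomorphisms, so $n=kN$. Since $\Pi_{0}$, being isomorphic to $\Pi$, is simple, is formally real as a subalgebra of $\Sym(\bb{R}^{N})$, and is irreducible in the sense of Definition~\ref{def:ssja}, it is a faithful irreducible representation of one of the simple formally real Jordan algebras $\mathfrak{J}$ of Theorem~\ref{th:jvnw3}. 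The Albert algebra $\mathfrak{M}_{3}^{8}$ is ruled out since it admits no non-trivial representation in $\Sym(\bb{R}^{n})$, and $\mathfrak{J}=\bb{R}$ forces $N=1$, giving form~(a) with $r=1$. For the remaining $\mathfrak{J}$ the image $\Pi_{0}$ is unique up to orthogonal conjugation and given explicitly---by Theorem~\ref{th:irrep1} when $\mathfrak{J}\in\{\Sym(\bb{R}^{r}),\mathfrak{H}(\bb{C}^{r}),\mathfrak{H}(\bb{H}^{r})\}$ with $r\ge3$, and by Theorem~\ref{th:irred} when $\mathfrak{J}=\mathfrak{S}_{N}$; the coincidences $\mathfrak{S}_{3}\cong\Sym(\bb{R}^{2})$, $\mathfrak{S}_{4}\cong\mathfrak{H}(\bb{C}^{2})$, $\mathfrak{S}_{6}\cong\mathfrak{H}(\bb{H}^{2})$ of Remark~\ref{rem:jvnw} explain why the $r=2$ spin factors are absorbed into (a)--(c), leaving only $N=5,7,8,\ldots$ in~(d) and~(e).

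It remains to dispose of the inter-block isomorphisms $T_{i}$. Each $T_{i}\Pi_{0}$ is again a faithful irreducible representation of $\mathfrak{J}$, so by the uniqueness just cited there is $R_{i}\in O(N)$ with $T_{i}\Pi_{0}=R_{i}\Pi_{0}R_{i}^{T}$; hence $\Gs_{i}\colon X\mapsto R_{i}^{T}(T_{i}X)R_{i}$ is a Jordan automorphism of $\Pi_{0}$. For every $\mathfrak{J}$ above except the spin factors $\mathfrak{S}_{N}$ with $N\equiv2\bmod4$, every such automorphism is itself an orthogonal conjugation, so conjugating the $i$-th block by the appropriate element of $O(N)$ replaces every $T_{i}$ by the identity and yields $\Pi=\{I_{k}\otimes A:A\in\Pi_{0}\}$, which is one of (a)--(d) according to $\mathfrak{J}$. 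When $\mathfrak{J}=\mathfrak{S}_{N}$ with $N\equiv2\bmod4$ and $N\ge6$, Theorem~\ref{th:irred} says instead that every Jordan automorphism of $\Pi_{0}=\Pi_{\mathfrak{S}_{N}}$ is a composition of an orthogonal conjugation with a power of the exotic map $T$ of (\ref{Jauto}); hence each block can be reduced to $A$ or to $TA$. Reordering the $k$ blocks so that the $s_{1}$ blocks of the first kind precede the $s_{2}$ of the second (a permutation of the basis) puts $\Pi$ in the form~(e); and because $T$ carries $\Pi_{\mathfrak{S}_{N}}$ onto itself, the degenerate cases $s_{1}=0$ or $s_{2}=0$ collapse to $\{I_{k}\otimes A\}$, i.e.\ form~(c) if $N=6$ and form~(d) otherwise.

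The substance here is entirely in the theorems being quoted; the point I would be most careful to get right is the last paragraph---that the automorphism classifications of the irreducible algebras are precisely strong enough that the exotic automorphism $T$ of the spin factors with $N\equiv2\bmod4$ is the \emph{only} obstruction to simultaneously trivializing all the $T_{i}$, and that once the low-rank coincidences $\mathfrak{S}_{3},\mathfrak{S}_{4},\mathfrak{S}_{6}$ are accounted for the list (a)--(e) is both exhaustive and free of redundancy.
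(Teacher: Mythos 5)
Your proposal is correct and takes exactly the route the paper intends: Theorem~\ref{th:jast} is presented there as a summary of (\ref{red0}), Corollary~\ref{cor:JAstruct}, Theorem~\ref{th:str}, and the classification results of Section~\ref{sec:iralg}, and your assembly supplies precisely the argument the paper leaves implicit. In particular, your handling of the inter-block isomorphisms $T_{i}$ --- reducing each to an orthogonal conjugation via the uniqueness of the irreducible representation, with the exotic automorphism (\ref{Jauto}) of $\mathfrak{S}_{N}$ for $N=2\mod 4$ as the sole obstruction, and absorbing the low-rank coincidences $\mathfrak{S}_{3},\mathfrak{S}_{4},\mathfrak{S}_{6}$ into cases (a)--(c) --- is exactly the content the paper relies on from Theorems~\ref{th:irrep1} and \ref{th:irred}.
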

The explicit characterization of all Jordan subalgebras of $\Sym(\bb{R}^{n})$
in Theorem~\ref{th:jast} allows us to answer a question about \completeness\ of
a subalgebra $\Pi\subset\Sym(\bb{R}^{n})$. By part~(i) of
Theorem~\ref{th:jast} we can write $\Pi=\Pi_{1}\oplus\ldots\oplus\Pi_{m}$,
where $\Pi_{\Ga}$ is a simple non-singular subalgebra of
$\Sym(\bb{R}^{d_{\Ga}})$. The algebra $\Pi$ will be \complete\  \IFF each of the
algebras $\Pi_{\Ga}$ is orthogonally equivalent to one of the algebras in
cases (a), (b) or (c) in part~(ii) of Theorem~\ref{th:jast}.

We remark on a single exception to the statement that \completeness\ is
determined by the isomorphism class of $\Pi$. The algebras $\Pi$ in
part~(ii)(e) $N=6$ and part~(ii)(c) $r=2$ are 
isomorphic to $\mathfrak{H}(\bb{H}^{2})$. However, the algebras in
part~(ii)(c) $r=2$ are \complete, while the ones in part~(ii)(e) $N=6$ are not.

Suppose now that we are looking for all subalgebras of $\Sym(\bb{R}^{n})$ that
are closed not only with respect to the product (\ref{Jprod}) but also with
respect to the products (\ref{GJprod}),
where the subspace $\CA$ of
multiplications is spanned by finitely many matrices
$A_{1}=I_{n},A_{2},\ldots,A_{s}$. Let $\Pi$ be a non-singular
$\CA$-multialgebra. Then, by Lemma~\ref{lem:nonsing},
$I_{n}\in\Pi$. Therefore, $A_{j}=I_{n}A_{j}I_{n}\in\Pi$, $j=2,\ldots,s$, and
hence, each multiplication $X*_{A_{j}}Y$ is a mutation of the standard
multiplication (\ref{Jprod}) \cite{koech99}. We
first choose a basis in which $\Pi$ has the form described in the Structure
Theorem~\ref{th:jast}. In that basis the submatrices of $A_{2},\ldots,A_{s}$
corresponding to $\CN(\Pi)^{\perp}$ must be block-diagonal, with each diagonal
block being a member of a simple component $\Pi_{\Ga}$ of $\Pi$. That is also
sufficient, since the triple product
$K_{\Ga}A_{\Ga}L_{\Ga}+L_{\Ga}A_{\Ga}K_{\Ga}$ always belongs to $\Pi_{\Ga}$. If we
exclude the special case when the $\Ga\Ga$-block $\CA_{\Ga}$ of $\CA$ has the form
$\CA_{\Ga}=I_{k}\otimes\CA_{0}$ we will exclude all cases in which the
simple algebra $\Pi_{\Ga}$ is reducible. In particular, this would exclude all
part~(ii)(e) cases. Hence, the question of \completeness\ of
$\CA$-multialgebras reduces to the question of \completeness\ of
$\CA$-multialgebras on the invariant subspaces of $\CA$. If $\CA$ has no
invariant subspaces then $\Pi$ must be irreducible. An irreducible algebra is
always \complete\  if $n$ is not a power of 2 or is less than 8.

Even though possibilities like $\CA_{\Ga}=I_{k}\otimes\CA_{0}$ cannot be ruled
out in general, we may try to understand when in\complete\ algebras can arise
generically. The subspace of $\Sym(\bb{R}^{n})$ of all matrices whose upper
left $8\times 8$ submatrix is in $\Pi_{\mathfrak{S}_{5}}$ has co-dimension
31. Therefore, the space $\CV_{0}$ of $s$-tuples of symmetric matrices (one of
which is $I_{n}$), all of whose upper left $8\times 8$ submatrices is in
$\Pi_{\mathfrak{S}_{5}}$ has co-dimension $31(s-1)$. The dimension of the
$O(n)$ orbit of a generic $s$-tuple of matrices is $n(n-1)/2$. The subspace
$\CV_{0}$ will intersect this $O(n)$ orbit generically only if $n(n-1)\ge
62(s-1)$. For example, for a generic 5-dimensional space $\CA$ (containing
$I_{n}$), all Jordan multialgebras will be \complete, if $n<17$. These
dimensional considerations explain the reason for \completeness\ in all of the
examples in
\cite{gms00,grab03,grab09,hegg-thesis12}. Conjecture~\ref{conj:compl} suggests
that another way to eliminate failure of \completeness\ may be to restrict the
Jordan multialgebras to $SO(3)$-invariant ones.

\section{Lamination exact relation that is not closed under homogenization}
\setcounter{equation}{0}
\label{sec:cex}
Returning to the physical example of multifield response composite materials
in Section~\ref{sec:ex} we see that the smallest $n$ for which the in\complete\  
Jordan subalgebra of $\Sym(\bb{R}^{n})$ provides an example on in\complete\  
Jordan $\CA$-multialgebra is $n=8$. However, the quaternionic formalism of
Section~\ref{sec:iralg} suggests an example of in\complete\  
Jordan $\CA$-multialgebra when $n=4$\footnote{We have identified all solutions of
(\ref{2dJMA}) for $n=2$ by hand and verified that all the Jordan
$\CA$-multialgebras are complete in this case. The case $n=3$ remains unexplored.}. 
Let
\begin{equation}
  \label{incompex}
  \CL=\{iQ(q):q\in\bb{H},\ \re(q)=0\}\subset\mathfrak{H}(\bb{C}^{4}),\qquad
\CM=\{aI_{4}:a\in\bb{C}\}\subset\Sym(\bb{C}^{4}),
\end{equation}
where $Q(q)$ is defined in (\ref{Qdef}). Let us verify that the 3-chain
condition (\ref{3chain}) fails.  Indeed, let $c=i$, $X_{1}=iQ(q_{1})$,
$X_{2}=iQ(q_{2})$, $X_{2}=iQ(q_{3})$, where it will be convenient to identify
the purely imaginary quaternions $q_{1}$, $q_{2}$ and $q_{3}$ with vectors in
$\bb{R}^{3}$. Let us assume that $\{q_{1},q_{2},q_{3}\}$ form a basis of
$\bb{R}^{3}$, so that the mixed product
$(q_{1},q_{2},q_{3})=q_{1}\cdot(q_{2}\times q_{3})$ is non-zero.  We compute
\[
X_{1}X_{2}^{T}X_{3}=iQ(q_{1}q_{2}q_{3}).
\]
Then 
\[
iX_{1}X_{2}^{T}X_{3}-iX_{3}X_{2}^{T}X_{1}=Q(2(q_{1},q_{2},q_{3}))=2(q_{1},q_{2},q_{3})I_{4}\not\in\CL.
\]
If we add real multiples of $I_{4}$ to $\CL$, we will obtain a completion of
our incomplete Jordan $\CA$-multialgebra:
\begin{equation}
  \label{compex}
\bra{\CL}=\{\Ga I_{4}+iQ(q):\Ga\in\bb{R},\ q\in\bb{H},\ \re(q)=0\},\qquad
\bra{\CM}=\CM=\{aI_{4}:a\in\bb{C}\},
\end{equation}
Indeed, the Jordan $\CA$-multialgebra (\ref{compex}) consists of all
elements in $\Sym(\CT)$ that belong to the associative $\CA$-multialgebra
\begin{equation}
  \label{assocex}
  \CL'=\CM'=\{aQ(q):a\in\bb{C},\ q\in\bb{H}\}.
\end{equation}
It is easy to check that the  associative $\CA$-multialgebra (\ref{assocex})
is symmetric in the sense of Definition~\ref{def:sja}.

In \cite{grab03} it was shown that the 3-chain relation property is necessary
for the $SO(2)$-invariant Jordan multialgebra to correspond to an exact
relation. Hence, the incomplete Jordan multialgebra (\ref{incompex}) provides
the first example of a rotationally invariant lamination exact relation in 2D
that is not stable under homogenization. For the sake of reference we compute
the image of both (\ref{incompex}) and its completion (\ref{compex}) in
physical variables. In order to formulate the results it will be convenient to
identify $\CT=\bb{R}^{2}\otimes\bb{R}^{4}$ with $\bb{H}^{2}$, via the isomorphisms
\[
\CT=\bb{R}^{2}\otimes\bb{R}^{4}\cong\bb{R}^{4}\oplus\bb{R}^{4}\cong
\bb{H}\oplus\bb{H}\cong\bb{H}^{2},
\]
where we have identified $\bb{R}^{4}$ with $\bb{H}$. The quaternionic
materials are identified with $\CH^{+}(\bb{H}^{2})$|the set of positive
definite quaternionic-Hermitean $2\times 2$ matrices
\[
\BL=\mat{\Gl}{h}{\bra{h}}{\mu},\qquad\Gl>0,\ \mu>0,\ h\in\bb{H},\ \det\BL=\Gl\mu-|h|^{2}>0.
\]
If
\[
\BE=\vect{q_{1}}{q_{2}}\in\bb{H}^{2}=\CT,\qquad
\BJ=\vect{p_{1}}{p_{2}}\in\bb{H}^{2}=\CT,\qquad\BJ=\BL\BE,
\]
then
\[
\vect{p_{1}}{p_{2}}=\vect{\Gl q_{1}+hq_{2}}{\bra{h}q_{1}+\mu q_{2}}.
\]
We may also represent the multifield response of quaternionic materials by the
conventional $8\times 8$ real symmetric matrix
\[
\BL=\mat{\Gl I_{4}}{Q(h)}{Q(\bra{h})}{\mu I_{4}},
\]
where $Q(q)$ is defined in (\ref{Qdef}).

Finally, the lamination exact relation that is not stable under homogenization
corresponding to (\ref{incompex}) is given by
\begin{equation}
  \label{myex}
\bb{M}=\{\BL\in\CH^{+}(\bb{H}^{2}):\det\BL=1\}.  
\end{equation}
Obviously, $\det\BL$ can be any positive constant in the definition of
$\bb{M}$. The constant is set to 1 for simplicity.

\section{Acknowledgement}
This material is based upon work supported by the National Science Foundation
under Grant No. 1008092.

\appendix

\section{Summary of \cite{eckm42}}
\setcounter{equation}{0} 
\label{app:eck}
Let $G_{p}$ be a finite group with generators
$a_{1},\ldots,a_{p-1}$, $p\ge 2$ and $\Gve$ satisfying the relations
\[
\Gve^{2}=1_{G_{p}},\quad a_{k}^{2}=\Gve,\quad\Gve a_{k}=a_{k}\Gve,\quad
a_{k}a_{l}=\Gve a_{l}a_{k},\quad k,l=1,\ldots,p-1,\quad k\not=l.
\]
Let $S={i_{1},\ldots,i_{k}}$ be a subset of $\{1,\dots,p-1\}$, where $1\le
i_{1}<i_{2}<\ldots<i_{k}\le p-1$. Let $a_{S}=a_{i_{1}}a_{i_{2}}\ldots
a_{i_{k}}$, where $a_{\emptyset}=1_{G_{p}}$. Then $G_{p}=\{a_{S},\Gve
a_{S}:S\subset\{1,\ldots,p-1\}\}$. Hence, $|G_{p}|=2^{p}$.
The first observation is that the commutator subgroup of
$G_{p}$ is $K=\{1_{G_{p}},\Gve\}$. It follows that $G_{p}$ has exactly $|G_{p}/K|=2^{p-1}$
non-isomorphic complex 1D representations. In each of them $\Gve$ gets sent to
1, since $\Gve$ belongs to a commutator subgroup. 

The second observation is that it is
easy to list all conjugacy classes of $G_{p}$ explicitly. They are
\[
\begin{cases}
  \{1_{G_{p}}\},\ \{\Gve\},\ \{a_{S},\Gve a_{S}\},\ 
  S\subsetneqq\{1,\ldots,p-1\},\ \{a_{\{1,\ldots,p-1\}}\},\
  \{\Gve a_{\{1,\ldots,p-1\}}\}, & p\text{ is even},\\
  \{1_{G_{p}}\},\ \{\Gve\},\ \{a_{S},\Gve a_{S}\},\ 
  S\subset\{1,\ldots,p-1\},\ S\not=\emptyset,\ 
  \{a_{\{1,\ldots,p-1\}},\Gve a_{\{1,\ldots,p-1\}}\}, & p\text{ is odd}.
\end{cases}
\]
The total number of non-isomorphic complex irreps of $G_{p}$ equals to the
number of conjugacy classes of $G_{p}$. We see that when $p$ is odd there is
exactly one non-1D irrep, while when $p$ is even there are exactly two.  The
sum of squares of the dimensions of all the irreps of $G_{p}$ equals to the
order of $G_{p}$. Hence, when $p$ is odd the dimension $d$ of the non-1D irrep
is $d=2^{\frac{p-1}{2}}$. When $p$ is even we denote the dimensions of the two
non-1D irreps by $d_{1}$ and $d_{2}$. Recall that $d_{1}$ and $d_{2}$ must
divide the order of the group. Hence, $d_{1}=2^{\Ga}$, $d_{2}=2^{\Gb}$ and
$2^{2\Ga}+2^{2\Gb}+2^{p-1}\cdot 1^{2}=2^{p}$. Thus,
$d_{1}=d_{2}=2^{\frac{p-2}{2}}$. We remark that $\Gve$ cannot get sent to the
identity matrix $I$ in a non-1D irrep, since in that case the commutator of
$G_{p}$ gets mapped to $I$ and all images of elements of $G_{p}$ would
commute with one another. Since the image of $\Gve$ must commute with the
images of all elements in the group, it must be mapped (by Schur's lemma) into
a multiple of the identity $\Gl I$ for some $\Gl\in\bb{C}$. However,
$\Gl^{2}=1$, since $\Gve^{2}=1_{G_{p}}$. Therefore, $\Gve$ must be sent to
$-I$.

The third observation is that $g^{2}=1_{G_{p}}$ or $\Gve$ for any $g\in
G_{p}$. In fact, $(\Gve a_{S})^{2}=a_{S}^{2}=\Gve^{r(r+1)/2}$, where
$r=|S|$. Therefore, $a_{S}^{2}$ gets mapped to $(-1)^{r(r+1)/2}I$.
This allows an explicit computation of the Frobenius-Schur indicator
\[
S=\nth{|G_{p}|}\sum_{g\in G_{p}}\chi(g^{2}),
\]
where $\chi$ is the character of the representation. The result is
\[
S={\rm sign}\left(\cos\left(\frac{\pi p}{4}\right)\right).
\]
This shows that the non-1D irreps of $G_{p}$ are of real type, when
$p=0,1,7\mod 8$, complex type, when $p=2,6\mod 8$ and quaternionic type when
$p=3,4,5\mod 8$. In particular, when $p=2,6\mod 8$ the representations $U$ and
$\bra{U}$ are not isomorphic and hence exhaust the list of two non-isomorphic
irreps for $G_{p}$. 

Thus, in order to obtain real irreps of $G_{p}$ in which $\Gve$ gets sent to
$-I$ we take real parts of the representations $U$ for $p=0,1,7\mod 8$, real
parts of the representations $U\oplus U$ for $p=3,4,5\mod 8$ and real
parts of the representations $U\oplus\bra{U}$ for $p=2,6\mod
8$. Theorem~\ref{th:Eckmann} summarizes the results.


\end{document}